\newtheorem{theorem}{Theorem}
\newtheorem{lemma}[theorem]{Lemma}
\newtheorem{corollary}[theorem]{Corollary}
\newtheorem{proposition}[theorem]{Proposition}
\theoremstyle{definition}
\newtheorem{example}[theorem]{Example}
\newcommand{\path}{\mathrm{Path}}
\newcommand{\R}{\mathbb{R}}
\newcommand{\N}{\mathbb{N}}
\newcommand{\M}{\mathbb{M}}
\begin{document}

\title{Topological Graph Inverse Semigroups}
\author{Z. Mesyan, J. D. Mitchell, M. Morayne, Y. H. P\'eresse}

\maketitle

\begin{abstract}

To every directed graph $E$ one can associate a \emph{graph inverse semigroup} $G(E)$, where elements roughly correspond to possible paths in $E$. These semigroups generalize polycyclic monoids, and they arise in the study of Leavitt path algebras, Cohn path algebras, graph $C^*$-algebras, and Toeplitz $C^*$-algebras. We investigate topologies that turn $G(E)$ into a topological semigroup. For instance, we show that in any such topology that is Hausdorff, $G(E)\setminus \{0\}$ must be discrete for any directed graph $E$. On the other hand, $G(E)$ need not be discrete in a Hausdorff semigroup topology, and for certain graphs $E$, $G(E)$ admits a $T_1$ semigroup topology in which $G(E)\setminus \{0\}$ is not discrete. We also describe, in various situations, the algebraic structure and possible cardinality of the closure of $G(E)$ in larger topological semigroups.
\medskip

\noindent
\emph{Keywords:}  graph inverse semigroup, polycyclic monoid, topological semigroup

\noindent
\emph{2010 MSC numbers:} 20M18, 22A15 (primary), 54H99, 05C20 (secondary)

\end{abstract}

\section{Introduction}

Given any directed graph $E$, one can construct a \emph{graph inverse semigroup} $G(E)$ (to be defined precisely below), where vaguely speaking, the elements correspond to possible paths in $E$. This class of semigroups was first introduced by Ash/Hall~\cite{AH}, in order to show that every partially ordered set can be realized as the partially ordered set of nonzero $\mathscr{J}$-classes of a semigroup. (Two elements in a semigroup are $\mathscr{J}$-\emph{equivalent} if they generate the same ideal.) \emph{Polycyclic monoids}, which were first defined by Nivat/Perrot~\cite{NP}, are a particularly well-studied class of graph inverse semigroups (e.g., \cite{Lawson,MS}). These monoids (with zero) have presentations by generators and relations of the following form: $$P_n = \langle e_1, \dots, e_n, e_1^{-1}, \dots, e_n^{-1} : e_i^{-1}e_j = \delta_{ij}\rangle,$$ where $\delta_{ij}$ is the Kronecker delta, and $P_1$ is known as the \emph{bicyclic monoid}. Graph inverse semigroups also arise in the study of rings and $C^*$-algebras. More specifically, for any field $K$ and any directed graph $E$, the (contracted) semigroup ring $KG(E)$ is called the \emph{Cohn path $K$-algebra} of $E$. The quotient of a Cohn path algebra by a certain ideal is known as the \emph{Leavitt path $K$-algebra} of $E$. These rings were introduced independently by Abrams/Aranda Pino~\cite{AP} and Ara/Moreno/Pardo~\cite{AMP}, and they have attracted much attention in recent years. Cohn path algebras and Leavitt path algebras are algebraic analogues of Toeplitz $C^*$-algebras and graph $C^*$-algebras (see~\cite{KPRR, KPR}), respectively. For more on the connection between graph inverse semigroups and $C^*$-algebras see~\cite{Paterson}. These semigroups have also received much attention in their own right recently~\cite{CS, JL, Krieger, Lawson2, Lenz, MM}.

In this article we study topologies that turn the graph inverse semigroups $G(E)$ into topological semigroups, i.e., topologies on $G(E)$ in which the multiplication operation of $G(E)$ is continuous. We show, among other things, that with respect to any such Hausdorff topology, $G(E)\setminus \{0\}$ must be discrete for all directed graphs $E$ (Theorem~\ref{discrete}), but that $G(E)$ admits a non-discrete metrizable semigroup topology for ``most" choices of $E$ (Proposition~\ref{non-disc-top}). Moreover, for certain directed graphs $E$, $G(E)$ admits a $T_1$ semigroup topology in which $G(E)\setminus \{0\}$ is not discrete (Example~\ref{T1-eg}). However, if $E$ is a finite graph, then the only locally compact Hausdorff semigroup topology on $G(E)$ is the discrete topology (Theorem~\ref{loc-comp}). We also show, for any $E$, that if $\overline{G(E)}$ is the closure of $G(E)$ in a Hausdorff topological inverse semigroup (i.e., one where in addition to the multiplication, the inversion operation is continuous), and $\mu \in \overline{G(E)}\setminus G(E)$ is any idempotent, then $\mu (\overline{G(E)}\setminus G(E))\mu \setminus \{0\}$ is either the trivial group or a group that contains a dense cyclic subgroup (Theorem~\ref{comp-gp-thrm}). Along the way to proving this result, we characterize all inverse subsemigroups $S$ of $G(E)$ such that $\mu\nu \neq 0$ for all $\mu, \nu \in S \setminus \{0\}$ (Theorem~\ref{no_zero_div_description}). In the final section, we give several results about the possible cardinalities of the complements of polycyclic monoids in their closures within larger topological semigroups.

Some of the aforementioned results generalize similar facts about the bicyclic monoid proved by Eberhart/Selden~\cite{ES}, though our proofs are generally quite different.

\subsection*{Acknowledgements}
We would like to thank the referee for pointing us to related literature and for suggesting very interesting directions for future research. We are also grateful to the University of St Andrews and the Wroc{\l}aw University of Technology for their hospitality during the writing of this article. Micha{\l} Morayne was partially supported by NCN grant DEC-2011/01/B/ST1/01439 while this work was performed. 

\section{Preliminaries}\label{definitions}

\subsection{Semigroups and Topology}\label{top-sect}

A semigroup $S$ is an \emph{inverse semigroup} if for each $x \in S$ there is a unique element $x^{-1} \in S$ satisfying $x = xx^{-1}x$ and $x^{-1} = x^{-1}xx^{-1}$. If $S$ is a semigroup and $\mathcal{O}$ is a topology on $S$, then we say that $S$ is a $\emph{topological semigroup}$ with respect to $\mathcal{O}$, or that $\mathcal{O}$ is a \emph{semigroup topology} on $S$, if the multiplication operation $* : S \times S \to S$ on $S$ is continuous with respect to $\mathcal{O}$, where $S\times S$ is endowed with the product topology.  If $S$ is an inverse semigroup that is a topological semigroup, then  $S$ is a $\emph{topological inverse semigroup}$ if the inverse operation $\cdot^{-1} : S \to S$ on $S$ is continuous.

Next we recall some standard topological concepts and notation. Let $X$ be a topological space. Then we say that $X$ is $T_1$ if for any two points $x, y \in X$ there is an open neighborhood of $x$ that does not contain $y$. Also, $X$ is said to be $T_2$, or \emph{Hausdorff}, if for any two points $x,y \in X$ there are open neighborhoods $U$ and $V$ of $x$ and $y$, respectively, such that $U \cap V = \emptyset$. If $Y \subseteq X$, then we denote the closure of $Y$ in $X$ by $\overline{Y}$. Also, if $d : X \times X \to \R$ is a metric (where $\R$ is the set of the real numbers), $x \in X$, and $m > 0$, then we let $B(x,m) = \{y \in X : d(x,y) < m\}$.

A basic fact about topological semigroups that will be useful is that if $G$ is a semigroup with zero element $0$, and $G$ is dense in a larger $T_1$ topological semigroup $S$, then $0 \cdot \mu=0$ and $\mu \cdot 0=0$ for all $\mu \in S$. To show the first equality (the second follows similarly), suppose that $0 \cdot \mu \neq 0$. Since the topology is $T_1$, there must be an open neighborhood $U$ of $0 \cdot \mu$ such that $0 \notin U$. By the continuity of multiplication, we can then find an open neighborhood $W$ of $\mu$ such that $0 \cdot W \subseteq U$. But, since $G$ is dense in $S$, $W$ must contain some $\nu \in G$, implying that $0\cdot \nu = 0 \in U$, contrary to assumption. Hence $0 \cdot \mu=0$. 

We shall denote the cardinality of a set $X$ by $|X|$. The set of all natural numbers (including $0$) will be denoted by $\N$.

\subsection{Graphs}

A \emph{directed graph} $E=(E^0,E^1,r,s)$ consists of two sets $E^0,E^1$ (containing \emph{vertices} and \emph{edges}, respectively), together with functions $s,r:E^1 \to E^0$, called \emph{source} and \emph{range}, respectively. A \emph{path} $x$ in $E$ is a finite sequence of (not necessarily distinct) edges $x=e_1\dots e_n$ such that $r(e_i)=s(e_{i+1})$ for $i=1,\dots,n-1$; in this case, $s(x):=s(e_1)$ is the \emph{source} of $x$, $r(x):=r(e_n)$ is the \emph{range} of $x$, and $|x|:=n$ is the \emph{length} of $x$. If $x = e_1\dots e_n$ is a path in $E$ such that $s(x)=r(x)$ and $s(e_i)\neq s(e_j)$ for every $i\neq j$, then $x$ is called a \emph{cycle}.  A cycle consisting of one edge is called a \emph{loop}. We view the elements of $E^0$ as paths of length $0$ (extending $r$ and $s$ to $E^0$ via $r(v)=v = s(v)$ for all $v\in E^0$), and denote by $\path(E)$ the set of all paths in $E$. A directed graph for which both $E^0$ and $E^1$ are finite sets is called a \emph{finite directed graph}.  From now on we shall refer to directed graphs as simply ``graphs".

Given a graph $E=(E^0,E^1,r,s)$, the \emph{graph inverse semigroup $G(E)$ of $E$} is the semigroup with zero generated by the sets $E^0$ and $E^1$, together with a set of variables $\{e^{-1} : e\in E^1\}$, satisfying the following relations for all $v,w\in E^0$ and $e,f\in E^1$:\\
(V)  $vw = \delta_{v,w}v$,\\ 
(E1) $s(e)e=er(e)=e$,\\
(E2) $r(e)e^{-1}=e^{-1}s(e)=e^{-1}$,\\
(CK1) $e^{-1}f=\delta _{e,f}r(e)$.\\
We define $v^{-1}=v$ for each $v \in E^0$, and for any path $y=e_1\dots e_n$ ($e_1\dots e_n \in E^1$) we let $y^{-1} = e_n^{-1} \dots e_1^{-1}$. With this notation, every nonzero element of $G(E)$ can be written uniquely as $xy^{-1}$ for some $x, y \in \path(E)$, by the CK1 relation. It is also easy to verify that $G(E)$ is indeed an inverse semigroup, with $(xy^{-1})^{-1} = yx^{-1}$ for all $x, y \in \path (E)$.

Informally speaking, we start with a graph $E$, add for each edge $e\in E^1$ a ``ghost" edge $e^{-1}$ going in the opposite direction of $e$ (between the same two vertices), and then turn $E$ into a semigroup where the elements correspond to possible paths in our extended graph. (Products of edges that do not occur consecutively along a possible path are $0$.)

If $E$ is a graph having only one vertex $v$ and $n$ edges (necessarily loops), for some integer $n \geq 1$, then $G(E)$ is known as a \emph{polycyclic monoid}, and we shall denote it by $P_n$. In particular, $P_n$ can be viewed as the monoid with zero presented by $$\langle e_1, \dots, e_n, e_1^{-1}, \dots, e_n^{-1} : e_i^{-1}e_j = \delta_{ij}\rangle,$$ if we identify the one vertex in $E$ with the identity element $1$ of this monoid. We note that the \emph{bicyclic monoid} $P_1$, as traditionally discussed in the literature, does not have a zero element. To allow for uniformity of treatment, however, we shall assume that a zero element has been adjoined, whenever referring to $P_1$.

\subsection{Connections With Rings}

In the Introduction we mentioned that graph inverse semigroups arise in the study of certain rings and $C^*$-algebras. Having defined these semigroups, we can state their connection with the rings in question explicitly. The reader unfamiliar with rings may safely skip this subsection.
 
Let $K$ be a field and $E$ a graph. Then the contracted semigroup ring $\overline{KG(E)}$ (i.e., the semigroup ring resulting from identifying the zero element of $G(E)$ with the zero in the semigroup ring $KG(E)$) is known as the \emph{Cohn path $K$-algebra} $C_K(E)$ \emph{of $E$}. Letting $N$ denote the ideal of $C_K(E)$ generated by elements of the form $v~-~\sum_{e\in s^{-1}(v)} ee^*,$ where $v \in E^0$ is a regular vertex (i.e., one that emits a nonzero finite number of edges), the ring $C_K(E)/N$ is called the \emph{Leavitt path $K$-algebra} $L_K(E)$ \emph{of $E$}. 

Many well-known rings arise as Leavitt path algebras. For example, the classical Leavitt $K$-algebra $L_K(n)$ for $n\geq 2$, introduced in~\cite{Leavitt} (which is universal with respect to an isomorphism property between finite-rank free modules), can be expressed as the Leavitt path algebra of the ``rose with $n$ petals" graph pictured below.
$$\xymatrix{ & {\bullet^v} \ar@(ur,dr) ^{e_1} \ar@(u,r) ^{e_2}
\ar@(ul,ur) ^{e_3} \ar@{.} @(l,u) \ar@{.} @(dr,dl) \ar@(r,d) ^{e_n}
\ar@{}[l] ^{\ldots} }$$
The full $d\times d$ matrix algebra $\M_d(K)$ is isomorphic to the Leavitt path algebra of the oriented line graph with $d$ vertices, shown below.
$$\xymatrix{{\bullet}^{v_1} \ar [r] ^{e_1} & {\bullet}^{v_2}  \ar@{.}[r] & {\bullet}^{v_{d-1}} \ar [r]^{e_{d-1}} & {\bullet}^{v_d}}$$
Also, the Laurent polynomial algebra $K[x,x^{-1}]$ can be identified with the Leavitt path algebra of the following ``one vertex, one loop" graph.
$$\xymatrix{{\bullet}^{v} \ar@(ur,dr) ^x}$$

\section{Hausdorff Topologies}

Our first goal is to show that, with the possible exception of $0$, every element of $G(E)$ must be isolated in any Hausdorff semigroup topology on $G(E)$. We begin with two lemmas.

\begin{lemma} \label{1-1_lemma}
The following hold for any graph $E$.
\begin{enumerate}
\item[$(1)$] If $x,y \in \path(E)$ are such that $r(x)=r(y)=v$, then $G(E)xy^{-1}G(E) = G(E)vG(E).$
\item[$(2)$] For any $\mu, \nu \in G(E)\setminus \{0\}$, the sets $\, \{\rho \in G(E) : \mu\rho=\nu\}$ and $\, \{\rho \in G(E) : \rho\mu=\nu\}$ are finite.
\end{enumerate}
\end{lemma}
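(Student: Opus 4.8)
The plan for part (1) is to manipulate the defining relations directly. Since $r(x)=r(y)=v$, relations (E1) and (E2) (together with $v^{-1}=v$ and (V) in the degenerate cases $|x|=0$ or $|y|=0$) give $xv=x$ and $vy^{-1}=y^{-1}$, so $xy^{-1}=x\,v\,y^{-1}\in G(E)vG(E)$; absorbing $x$ and $y^{-1}$ into the surrounding copies of $G(E)$ yields $G(E)xy^{-1}G(E)\subseteq G(E)vG(E)$. For the reverse inclusion I would first record the identity $x^{-1}x=r(x)$, valid for every $x\in\path(E)$, which follows by repeated application of (CK1) (and (E1)); then $x^{-1}(xy^{-1})y=(x^{-1}x)(y^{-1}y)=vv=v$, so $v\in G(E)xy^{-1}G(E)$, whence $G(E)vG(E)\subseteq G(E)xy^{-1}G(E)$. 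The two inclusions give the claimed equality.

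For part (2), fix $\mu,\nu\in G(E)\setminus\{0\}$ and note that $\mu\rho=\nu\neq 0$ forces $\rho\neq 0$, so I may work with normal forms $\mu=xy^{-1}$, $\nu=ab^{-1}$, $\rho=cd^{-1}$, where $x,y,a,b,c,d\in\path(E)$. The key computational input, obtained from (E1), (E2), (CK1), is the multiplication rule: $(xy^{-1})(cd^{-1})$ equals $0$ unless either $c=yc'$ for some $c'\in\path(E)$, in which case it equals $xc'd^{-1}$, or $y=cy'$ for some $y'\in\path(E)$, in which case it equals $x(dy')^{-1}$. Splitting $\{\rho : \mu\rho=\nu\}$ along these two cases and using uniqueness of the normal form: in the first case $xc'd^{-1}=ab^{-1}$ forces $xc'=a$ and $d=b$, which pins down $c'$ (the unique suffix of $a$ following the prefix $x$, when such a suffix exists), hence $c=yc'$, hence $\rho$ — at most one solution. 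In the second case $x(dy')^{-1}=ab^{-1}$ forces $x=a$ and $dy'=b$; here $y'$ can only be one of the at most $|y|+1$ suffixes of $y$ obtained by deleting a prefix $c$ of $y$, and each such choice determines $c$ and then $d$ (via $dy'=b$, when solvable) — at most $|y|+1$ solutions. Thus $\{\rho\in G(E):\mu\rho=\nu\}$ has at most $|y|+2$ elements.

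Finally, the set $\{\rho\in G(E):\rho\mu=\nu\}$ is handled by symmetry via the inversion map: $\rho\mu=\nu$ if and only if $\mu^{-1}\rho^{-1}=\nu^{-1}$, and since $\rho\mapsto\rho^{-1}$ is a bijection of $G(E)$, this set is the image under inversion of $\{\sigma\in G(E):\mu^{-1}\sigma=\nu^{-1}\}$, which is finite by the case already treated (applied to $\mu^{-1}$ and $\nu^{-1}$). The step I expect to require the most care is establishing the multiplication rule above and checking that the resulting case split is exhaustive, together with the bookkeeping of prefixes and suffixes of $y$ (including the length-$0$ subcases, where the two cases overlap); once that is in place, the finiteness is essentially the observation that the fixed finite path $y$ has only finitely many prefixes.
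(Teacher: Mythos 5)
Your proposal is correct and takes essentially the same approach as the paper: the same two identities $xy^{-1}=xvy^{-1}$ and $x^{-1}(xy^{-1})y=v$ for part (1), and for part (2) the same case split on whether the path part of $\rho$ extends $y$ or is extended by it, with uniqueness of the normal form $pq^{-1}$ giving the finite count. The only cosmetic difference is that you handle $\{\rho : \rho\mu=\nu\}$ by applying the inversion anti-automorphism $\rho\mapsto\rho^{-1}$ to reduce to the first case, where the paper just appeals to a symmetric argument.
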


\begin{proof}
(1) This follows from the equations $x^{-1}(xy^{-1})y = v^2 = v$ and $xy^{-1}=xvy^{-1}$.

\vspace{\baselineskip}

(2) Write $\mu=pq^{-1}$, $\nu=tu^{-1}$, and $\rho=xy^{-1}$, where $p,q,t,u,x,y \in \path (E)$. If $\mu\rho=\nu$, then there must be a path $z \in \path (E)$ such that either $x=qz$ or $q=xz$ (for, otherwise $q^{-1}x=0$). In the first case, $$\mu\rho=pq^{-1}xy^{-1}=pq^{-1}qzy^{-1}=pzy^{-1},$$ implying that $t=pz$ and $u=y$, which determines $\rho$ uniquely as $\rho = qzu^{-1} = qp^{-1}tu^{-1}$. In the second case, $$\mu\rho=pq^{-1}xy^{-1}=pz^{-1}x^{-1}xy^{-1}=pz^{-1}y^{-1},$$ implying that $t=p$ and $u=yz$. That $\{\rho \in G(E) : \mu\rho=\nu\}$ is finite now follows from the fact that only finitely many choices of $x, y, z$ can satisfy $q=xz$ and $u=yz$.

The finiteness of $\{\rho \in G(E) : \rho\mu=\nu\}$ can be shown by a similar argument.
\end{proof}

\begin{lemma} \label{1-2_lemma}
Let $E$ be a graph, and suppose that $G(E)$ is a topological semigroup with respect to a $T_1$ topology $\mathcal{O}$.
\begin{enumerate}
\item[$(1)$] Suppose that $\mu \in G(E)\setminus \{0\}$ is a limit point, and let $\nu \in G(E)$. If $\mu\nu \neq 0$, then $\mu\nu$ is a limit point, and if $\nu\mu \neq 0$, then $\nu\mu$ is a limit point.
\item[$(2)$] If $v \in E^0$ is a limit point, then $\, |\{e \in E^1 : s(e) = v\}|=1$.
\item[$(3)$] Suppose that $v \in E^0$ is a limit point and $z=e_1\dots e_n \in \path (E)$ is a cycle $(e_1, \dots, e_n \in E^1)$ such that $s(z)=v=r(z)$. Then there is an open neighborhood of $v$ consisting entirely of elements of the form $z^le_1\dots e_ke_k^{-1}\dots e_1^{-1}z^{-m}$ $($$k,l,m \in \N$, $k < n$$)$, where we understand $z^0$ to be $v$ and $z^le_1\dots e_ke_k^{-1}\dots e_1^{-1}z^{-m} = z^lz^{-m}$ when $k=0$.
\item[$(4)$] Suppose that $v \in E^0$ is a limit point, and for all $p \in \path (E)$ with source $v$, $r(p)$ is not the source of a cycle. Then there exists a sequence $e_1, e_2, \dots $ of edges and an open neighborhood of $v$ consisting entirely of nonzero elements of the form $e_1 \dots e_ne_n^{-1} \dots e_1^{-1}$ $($$n \in \N$$)$, where we understand $e_1 \dots e_ne_n^{-1} \dots e_1^{-1}$ to be $v$ when $n=0$.
\end{enumerate}
\end{lemma}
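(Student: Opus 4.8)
The plan is to prove parts (1)--(4) in turn, using throughout two elementary consequences of the $T_1$ axiom (singletons are closed, and a point admitting a finite open neighbourhood is isolated), the fact that joint continuity of multiplication makes every translation $\rho\mapsto\nu\rho$, $\rho\mapsto\rho\nu$, and more generally every map $\rho\mapsto g^{-1}\rho h$ with $g,h$ fixed elements of $G(E)$, continuous, and Lemma~\ref{1-1_lemma}. For (1): if $\mu\nu\neq 0$ were isolated, then $\{\mu\nu\}$ would be open, so its preimage under right translation by $\nu$, namely $\{\rho\in G(E):\rho\nu=\mu\nu\}$, would be open; since $\mu\nu\neq 0$ (hence also $\nu\neq 0$), Lemma~\ref{1-1_lemma}(2) shows this preimage is finite as well, hence consists of isolated points, contradicting that $\mu$ lies in it and is a limit point. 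The statement for $\nu\mu$ is symmetric, via left translation by $\nu$ and the other finite set in Lemma~\ref{1-1_lemma}(2).

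For (2), note that if $\rho=xy^{-1}$ with $x,y\in\path(E)$, then $v\rho v=\rho$ when $s(x)=s(y)=v$ and $v\rho v=0$ otherwise, while $v\cdot 0\cdot v=0$; as $\{0\}$ is closed, $\{\rho:v\rho v\neq 0\}=\{xy^{-1}:s(x)=s(y)=v\}$ is open. If $v$ emits no edge this set is $\{v\}$, so $v$ is isolated, a contradiction. If $v$ emits distinct edges $e\neq f$, choose a net $(\rho_\alpha)$ in $G(E)\setminus\{v\}$ converging to $v$ (possible since $v$ is a limit point). The continuous maps $\rho\mapsto e^{-1}\rho e$ and $\rho\mapsto f^{-1}\rho f$ send $v$ to the nonzero vertices $r(e)$ and $r(f)$, so (using $\{0\}$ closed) eventually $\rho_\alpha\neq 0$, say $\rho_\alpha=x_\alpha y_\alpha^{-1}$, with $e^{-1}\rho_\alpha e\neq 0$ and $f^{-1}\rho_\alpha f\neq 0$. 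Unwinding these products via the defining relations forces each of $x_\alpha,y_\alpha$ to equal $v$ or to begin with $e$, and also to equal $v$ or to begin with $f$; since $e\neq f$, no path begins with both, so $x_\alpha=y_\alpha=v$ and $\rho_\alpha=v$, contradicting the choice of the net.

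For (3) and (4), I will first use (2) and (1) to read off the paths with source $v$. By (2), $v$ emits a unique edge, which in case (3) must be $e_1$; since $e_1=ve_1\neq 0$, part (1) makes $e_1$ a limit point, and then $r(e_1)=e_1^{-1}e_1\neq 0$ with part (1) makes $r(e_1)=s(e_2)$ a limit point. Iterating, every $s(e_i)$ is a limit point emitting a unique edge---namely $e_i$---and $r(e_n)=v$, so in case (3) every path with source $v$ is a prefix of $zz\cdots$, i.e.\ has the form $z^{l}e_1\cdots e_k$ with $k<n$. In case (4) the same iteration manufactures a sequence $e_1,e_2,\dots$, and the hypothesis that no vertex reachable from $v$ is the source of a cycle forces the $s(e_i)$ to be pairwise distinct, so every path with source $v$ is a prefix of $e_1e_2\cdots$. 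Next, applying continuity of $\rho\mapsto v\rho v$ at $v$ with a target neighbourhood missing $0$ yields an open neighbourhood $W$ of $v$ contained in $\{xy^{-1}:s(x)=s(y)=v\}$. Finally, I claim that every $xy^{-1}$ in this set with $|x|\not\equiv|y|\pmod{n}$ (case (3)), respectively $|x|\neq|y|$ (case (4)), satisfies $(xy^{-1})^2=0$: in $xy^{-1}xy^{-1}=x(y^{-1}x)y^{-1}$ the factor $y^{-1}x$ reduces to a path, or reversed path, starting at $r(y)$, so $x(y^{-1}x)=0$ because $r(x)\neq r(y)$, these being distinct vertices of the cycle, respectively of the ray $e_1e_2\cdots$. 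Hence, if such ``off-diagonal'' elements met every neighbourhood of $v$ inside $W$, a net of them would converge to $v$ while its squares, all equal to $0$, would converge to $v^2=v$, giving $v=0$, which is absurd; so some neighbourhood of $v$ inside $W$ consists only of the ``diagonal'' elements $z^{l}e_1\cdots e_ke_k^{-1}\cdots e_1^{-1}z^{-m}$ in case (3), respectively $e_1\cdots e_ne_n^{-1}\cdots e_1^{-1}$ in case (4).

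The step I expect to be hardest is the two-edge case of (2). In the no-edge case one simply exhibits $\{v\}$ as open, but when $v$ emits $e\neq f$ the natural open neighbourhoods of $v$ assembled from the conjugation maps are strictly larger than $\{v\}$, so no ``$\{v\}$ is open'' argument is available; the net argument---legitimate precisely because we do not assume first countability---combined with the fact that two distinct edges cannot both start the same path is what pushes it through. By comparison, the squaring trick underlying (3) and (4) is, once the path structure at $v$ is understood, essentially bookkeeping.
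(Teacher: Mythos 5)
Your proof is correct, and for parts (1), (3) and (4) it is essentially the paper's argument. Part (1) is the same combination of Lemma~\ref{1-1_lemma}(2), continuity of translations and the $T_1$ axiom, just run contrapositively (a finite open preimage of the singleton $\{\mu\nu\}$, rather than the paper's observation that an infinite neighbourhood of $\mu$ must push to infinitely many products). Parts (3) and (4) follow the paper's two steps exactly: propagate ``limit point'' along paths via (1) and (2) to see that every path issuing from $v$ runs along the cycle $z$, respectively along a ray $e_1e_2\cdots$, and then use continuity at $vv=v$ together with $T_1$ to get a neighbourhood $W$ of $v$ all of whose elements $xy^{-1}$ satisfy $s(x)=s(y)=v$. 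Your closing squaring/net argument there is superfluous (though harmless): a nonzero element written as $xy^{-1}$ automatically has $r(x)=r(y)$, since otherwise $xy^{-1}=xr(x)\cdot r(y)y^{-1}=0$; on the cycle, respectively the ray, $r(x)=r(y)$ already forces the ``diagonal'' shape, which is exactly how the paper concludes, so your ``off-diagonal'' elements are just the zero element and never lie in $W$ to begin with. Where you genuinely diverge is part (2). The paper handles the sink case by forcing $0$ into a product set $VV\subseteq U$ with $0\notin U$, and the case of two outgoing edges by first proving that every neighbourhood of $v$ contains an element not beginning with a prescribed edge, and then playing neighbourhoods of $e$, $f$ and $e^{-1}$ against one another. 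You instead note that $\{\rho : v\rho v\neq 0\}$ is open (so a sink would make $\{v\}$ open), and in the two-edge case you push a net $\rho_\alpha\to v$ with $\rho_\alpha\neq v$ through the continuous maps $\rho\mapsto e^{-1}\rho e$ and $\rho\mapsto f^{-1}\rho f$, using that no path begins with both $e$ and $f$ to force $\rho_\alpha=v$ eventually. This is correct and arguably cleaner; the paper's version buys a proof phrased purely in terms of open neighbourhoods, with no appeal to nets, but the two arguments have the same content.
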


\begin{proof}
(1) Suppose that $\mu\nu \neq 0$, and let $U$ be an open neighborhood of $\mu\nu$. By the continuity of multiplication, there is an open neighborhood $V$ of $\mu$ such that $V\nu \subseteq U$. Since $\mu$ is a limit point, $V$ must be infinite, and hence $U$ must be infinite as well, by Lemma~\ref{1-1_lemma}(2). Thus, $\mu\nu$ is also a limit point, and, by a similar argument, so is $\nu\mu$ (in case it is nonzero).

\vspace{\baselineskip}

(2) First suppose that $v$ is a sink, i.e.\ that $|\{e \in E^1 : s(e) = v\}|=0$. Since $\mathcal{O}$ is $T_1$, we can find an open neighborhood $U$ of $v$ such that $0 \notin U$. Since $vv=v$, by the continuity of multiplication, there must be an (infinite) open neighborhood $V$ of $v$ such that $VV \subseteq U$. But, since $v$ is a sink, for any $\mu \in G(E) \setminus \{v\}$ either $v\mu = 0$ or $\mu v = 0$. Hence either $0 \in \mu V$ or $0 \in V \mu$, which implies that $0 \in U$, contrary to our assumption. Thus $v$ cannot be a sink.

Now suppose that  $|\{e \in E^1 : s(e) = v\}|\geq 2$. Suppose also that there is an open neighborhood $U$ of $v$ and an edge $e \in E^1$ such that for all $\mu \in U \setminus \{v\}$, $\mu = e\nu$ for some $\nu \in G(E)$. By assumption, we can find some $f \in E^1 \setminus \{e\}$ such that $s(f) = v$, and since the topology is $T_1$, there is an open neighborhood $W$ of $f^{-1}$ such that $0 \notin W$. Since $f^{-1}v=f^{-1}$ and multiplication is continuous, there must be an open neighborhood $U'$ of $v$ such that $f^{-1}U' \subseteq W$, and hence $f^{-1}(U \cap U') \subseteq W$. But, for all  $\mu \in U \setminus \{v\}$ we have $0 = f^{-1}\mu$, by our choice of $U$, and hence $0 \in W$ (since $U \cap U'$ is infinite, by the fact that $v$ is a limit point), contrary to assumption. Thus for any open neighborhood $U$ of $v$ and $e \in E^1$ there is some $\mu \in U \setminus \{v\}$ such that $\mu \neq e\nu$ for all $\nu \in G(E)$.

Next, let $e, f \in E^1$ be distinct edges having source $v$. Since $\mathcal{O}$ is $T_1$, we can find open neighborhoods $U_e$, $U_f$, $U_{e^{-1}}$ of $e$, $f$, and $e^{-1}$, respectively, such that $0 \notin U_e \cup U_f \cup U_{e^{-1}}$. Since $ve=e$, $vf=f$, and $e^{-1}v = e^{-1}$, by the continuity of multiplication we can find open neighborhoods $V_e$, $V_f$, and $V_{e^{-1}}$ of $v$ such that $V_ee \subseteq U_e$, $V_ff \subseteq U_f$, and $e^{-1}V_{e^{-1}} \subseteq U_{e^{-1}}$. Let $V = V_e \cap V_f \cap V_{e^{-1}}$. Then $Ve \subseteq U_e$, $Vf \subseteq U_f$, and $e^{-1}V \subseteq U_{e^{-1}}$, which implies that $0 \notin Ve \cup Vf \cup e^{-1}V$. Since $e \neq f$, $0 \notin Ve \cup Vf$ implies that $V \subseteq \path (E)$. Since $V$ is infinite, by the previous paragraph, this in turn implies that there is some $\mu \in V \setminus \{v\}$ such that $\mu \in \path (E)$ and $\mu \neq e\nu$ for all $\nu \in G(E)$. But then $e^{-1}\mu = 0$, and hence $0 \in e^{-1}V \subseteq U_{e^{-1}}$, contradicting our choice of $U_{e^{-1}}$. Therefore we cannot have $|\{e \in E^1 : s(e) = v\}|\geq 2$, and hence $|\{e \in E^1 : s(e) = v\}|=1$.

\vspace{\baselineskip}

(3) Let $v \in E^0$ and $z = e_1\dots e_n$ be a cycle as in the statement. By (2), we have $|\{e \in E^1 : s(e) = v\}|=1$. We note also that if $p \in \path (E)$ is such that $s(p) = v$, then $r(p)$ must be a limit point. For, since $p=vp$, by (1), $p$ must be a limit point, and since $r(p) = p^{-1}p$, by the same statement, $r(p)$ must be a limit point. It follows, by (2), that the only paths having source $v$ are of the form $z^le_1\dots e_k$ ($k,l \in \N$, $k\leq n$). Since $\mathcal{O}$ is $T_1$, we can find an open neighborhood $V$ of $v$ such that $0 \notin V$. Since $vv=v$, by the continuity of multiplication, we can find an open neighborhood $W$ of $v$ such that $WW \subseteq V$. Thus, letting $\mu \in W$ be any element, we see that $v \mu \neq 0 \neq \mu v$. Writing $\mu = pq^{-1}$ for some $p,q \in \path (E)$, it follows that $s(p)=v=s(q)$. But, by the above description of such paths, this means that $p=z^le_1\dots e_k$ and $q=z^me_1\dots e_j$ for some $j,k,l,m \in \N$, with $j,k < n$. Since $\mu = pq^{-1} \neq 0$, it must be the case that $j=k$, and hence $p=z^le_1\dots e_k$ and $q=z^me_1\dots e_k$. Therefore $W$ consists entirely of elements of the desired form.

\vspace{\baselineskip}

(4) Suppose that $v=v_1 \in E^0$ is a limit point. By (2), there is a unique edge $e_1 \in E^1$ having source $v_1$. By hypothesis, $e_1$ is not a loop, and hence $v_2 = r(e_1) \neq v_1$.  Since $e_1=v_1e_1$, by (1), $e_1$ must be a limit point, and since $v_2 = e_1^{-1}e_1$, $v_2$ must be a limit point. Letting $e_2 \in E^1$ be the unique edge having source $v_2$, by hypothesis, $r(e_1e_2) \notin \{v_1, v_2\}$. Repeating this argument, we conclude that $E$ must have the following subgraph, where for each $i\geq 1$, $e_i \in E^1$ is the only edge with source $v_i$.
$$\xymatrix{{\bullet}^{v_1} \ar [r] ^{e_1} & {\bullet}^{v_2} \ar [r] ^{e_2} & {\bullet}^{v_3} \ar [r] ^{e_3}  & ...  }$$
It follows that the only paths having source $v_1$ are of the form $e_1 \dots e_n$ ($n \in \N$).

By the same argument as in the proof of (3), we can find an open neighborhood $W$ of $v_1$ such that $v\mu \neq 0 \neq \mu v$ for every $\mu \in W$. Thus, for every $\mu \in W$, writing $\mu = pq^{-1}$ ($p,q \in \path (E)$), we must have $s(p)=v=s(q)$. Therefore, $p=q=e_1 \dots e_n$ for some $n \in \N$ (since $r(p)=r(q)$), and hence $W$ consists entirely of elements of the form $e_1 \dots e_ne_n^{-1} \dots e_1^{-1}$ ($n \in \N$), as desired.
\end{proof}

We are now ready to prove the main result of this section. It generalizes \cite[Corollary I.2]{ES}, which says that the bicyclic monoid (without zero) is discrete in any semigroup topology, though our proof uses a very different approach.

\begin{theorem} \label{discrete}
Suppose that $E$ is a graph, and that $G(E)$ is a topological semigroup with respect to a Hausdorff topology $\mathcal{O}$. Then $G(E)\setminus \{0\}$ must be discrete.
\end{theorem}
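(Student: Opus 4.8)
The plan is to suppose $G(E)\setminus\{0\}$ is not discrete, extract a limit point, reduce it to a vertex, read off the local structure of $G(E)$ near that vertex, and then contradict the Hausdorff axiom.

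So suppose some $\mu\in G(E)\setminus\{0\}$ is a limit point, and write $\mu=xy^{-1}$ with $x,y\in\path(E)$ and $r(x)=r(y)$. Applying Lemma~\ref{1-2_lemma}(1) twice --- first with $\nu=x^{-1}$, since $x^{-1}\mu=r(x)y^{-1}=y^{-1}\neq 0$, and then with the limit point $y^{-1}$ and $\nu=y$, since $y^{-1}y=r(y)\neq 0$ --- shows that $v:=r(\mu)\in E^0$ is a limit point. By Lemma~\ref{1-2_lemma}(2), $v$ emits exactly one edge $c_1$; and since each $r(c_1\cdots c_k)$ is again a limit point (apply Lemma~\ref{1-2_lemma}(1) along $c_1,c_2,\dots$), it too emits a single edge. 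Hence every path in $E$ with source $v$ is a prefix $c_1\cdots c_k$ of one fixed forward path $c_1c_2c_3\cdots$; in particular the paths with source $v$ are totally ordered by the prefix relation, and if any of them returns to $v$, the shortest such is a cycle $z$ at $v$ and every closed path at $v$ is a power of $z$.

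Next, because $v=vv$ and the topology, being Hausdorff, is $T_1$, continuity of multiplication gives an open $W\ni v$ with $0\notin WW$; then $v\gamma\neq 0\neq\gamma v$ for every $\gamma=pq^{-1}\in W$, forcing $s(p)=s(q)=v$, so that $p=c_1\cdots c_j$ and $q=c_1\cdots c_k$ for some $j,k\geq 0$. Since $v$ is a limit point, $v\in\overline{W\setminus\{v\}}$, and I split $W\setminus\{v\}$ into: $W_A$, the elements with $|p|\geq 1$ and $|q|\geq 1$ (equivalently, $p$ and $q$ both begin with $c_1$); $W_B$, the elements with $p=v$, which forces $q$ to be a closed path at $v$, hence $\gamma=z^{-m}$ for some $m\geq 1$ (so $W_B=\emptyset$ unless $v$ lies on a cycle $z$); and $W_C$, the elements with $q=v$, i.e.\ $\gamma=z^{\ell}$ with $\ell\geq 1$. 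Thus $v$ lies in the closure of at least one of $W_A,W_B,W_C$.

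The crux --- and the step I expect to be the main obstacle --- is to exhibit in each case an idempotent $\eta$ acting as the identity on the relevant piece of $W$ but not fixing $v$, which cannot coexist with Hausdorffness. If $v\in\overline{W_A}$, take $\eta=c_1c_1^{-1}$; a short computation with the relations (V), (E1), (E2), (CK1) (using $c_1^{-1}c_1=r(c_1)$ and $(q')^{-1}s(q')=(q')^{-1}$) gives $\gamma\eta=\gamma$ for every $\gamma=(c_1p')(c_1q')^{-1}\in W_A$, whereas $v\eta=c_1c_1^{-1}\neq v$ by uniqueness of normal forms. If $v\in\overline{W_B}$ or $v\in\overline{W_C}$, take $\eta=zz^{-1}$; since $z^{-1}z=r(z)=v$, one gets $z^{-m}\eta=z^{-m}$ for all $m\geq 1$ (resp.\ $\eta z^{\ell}=z^{\ell}$ for all $\ell\geq 1$), while $v\eta=zz^{-1}\neq v$ (resp.\ $\eta v\neq v$). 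Now, with $W_X$ the chosen piece, use Hausdorffness to pick disjoint open sets $P\ni v$ and $Q\ni v\eta$ (resp.\ $Q\ni\eta v$); use continuity of right- (resp.\ left-) translation by $\eta$ to get an open $P'\ni v$ with $P'\eta\subseteq Q$ (resp.\ $\eta P'\subseteq Q$); and pick $\gamma\in P\cap P'\cap W_X$, possible since $v\in\overline{W_X}$. Then $\gamma=\gamma\eta\in Q$ (resp.\ $\gamma=\eta\gamma\in Q$) while $\gamma\in P$, contradicting $P\cap Q=\emptyset$. This contradiction finishes the proof. Note that Hausdorffness (rather than mere $T_1$) is used precisely to separate $v$ from $v\eta$, and the real work is in choosing the correct $\eta$ for each of the three pieces and verifying the short relation-computations above.
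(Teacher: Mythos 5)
Your proof is correct and takes essentially the same approach as the paper: reduce to a limit-point vertex, show it emits a unique edge so that all paths issuing from it lie along a single ray or cycle, and then contradict Hausdorffness by separating $v$ from an idempotent that acts as the identity, under one-sided translation, on a punctured neighborhood of $v$. The only cosmetic difference is that you re-derive the content of Lemma~\ref{1-2_lemma}(3,4) inline and split the neighborhood into three pieces with two idempotents ($c_1c_1^{-1}$ and $zz^{-1}$), where the paper observes that every element of the punctured neighborhood begins with $e_1$ or ends with $e_1^{-1}$ and uses the single idempotent $e_1e_1^{-1}$.
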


\begin{proof}
Suppose that $\mu \in G(E)\setminus \{0\}$ is a limit point. By Lemma~\ref{1-1_lemma}(1), we can find some $\nu, \tau \in G(E)$ such that $\nu\mu\tau \in E^0$. Hence, by Lemma~\ref{1-2_lemma}(1), there must be some vertex $v \in E^0$ that is a limit point. We shall show that this leads to a contradiction.

If there is some $p \in \path (E)$ such that $s(p) =v$ and $r(p)$ is the source of a cycle, then by Lemma~\ref{1-2_lemma}(1), $r(p)$ must be a limit point (since $r(p)=p^{-1}vp$). Thus, upon replacing $v$ with $r(p)$ if necessary, we may assume that either $v$ is the source of a cycle, or that for all $p \in \path (E)$ with $s(p) =v$, $r(p)$ is not the source of a cycle. Then, in either case, by Lemma~\ref{1-2_lemma}(3,4), there is an edge $e_1 \in E^1$ and an open neighborhood $W$ of $v$ such that every element of $W \setminus \{v\}$ either begins with $e_1$ or ends with $e_1^{-1}$. Since $\mathcal{O}$ is Hausdorff, we can find open neighborhoods $V$ and $U$, of $v$ and $e_1e_1^{-1}$, respectively, such that $U \cap V = \emptyset$. Since $e_1e_1^{-1}v=e_1e_1^{-1}$ and $ve_1e_1^{-1}=e_1e_1^{-1}$, upon passing to a subneighborhood of $V$, we may assume that $e_1e_1^{-1}V, Ve_1e_1^{-1} \subseteq U$, using the continuity of multiplication. Upon intersecting with $W$, we may assume that every element of $V \setminus \{v\}$ either begins with $e_1$ or ends with $e_1^{-1}$ (and since $v$ is a limit point, $V$ must be infinite). But then, taking any $\mu \in V\setminus \{v\}$, either $e_1e_1^{-1}\mu = \mu$ or $\mu e_1e_1^{-1}=\mu$, which violates $U \cap V = \emptyset$, giving the desired contradiction.
\end{proof}

Let us next give an example showing that the conclusion of Theorem~\ref{discrete} no longer holds if the Hausdorff assumption is dropped. More specifically, we shall construct a graph $E$ and a $T_1$ (but not Hausdorff) topology, with respect to which $G(E)$ is a topological semigroup and $G(E)\setminus \{0\}$ is not discrete. 

\begin{example} \label{T1-eg}
Let $E$ be the following graph.
$$\xymatrix{{\bullet}^{v_1} \ar [r] ^{e_1} & {\bullet}^{v_2} \ar [r] ^{e_2} & {\bullet}^{v_3} \ar [r] ^{e_3}  & ...  }$$ 
For all $n \in \N$, and for all $p, q \in \path (E)$ such that $r(p)=r(q)$, let $$U_{pq^{-1},n} = \{pxx^{-1}q^{-1} : x \in \path (E), s(x)=r(p)=r(q), |x| > n\} \cup \{pq^{-1}\}.$$ Let $\mathcal{O}$ be the topology on $G(E)$ generated by the subbase consisting of $\{0\}$ and the sets $U_{pq^{-1},n}$. We claim that with this topology $G(E)$ is a $T_1$ topological semigroup, and that $G(E) \setminus \{0\}$ is not discrete.

To show that $\mathcal{O}$ is $T_1$, let $p,q,t,z \in \path (E)$ be such that $pq^{-1} \neq tz^{-1}$ (and $r(p)=r(q)$, $r(t)=r(z)$). Also, let $n \in \N$ be such that $n \geq \mathrm{max}\{|t|,|z|\}$. Then $tz^{-1} \notin U_{pq^{-1},n}$, giving the desired conclusion.

To prove that $G(E) \setminus \{0\}$ is not discrete we shall show that any nonempty finite intersection of sets (other than $\{0\}$) in our subbase contains infinitely many elements. Thus let $n_1, \dots, n_m \in \N$ and $p_1, \dots, p_m, q_1, \dots, q_m \in \path (E)$ be such that $r(p_i)=r(q_i)$ for each $i$, and suppose that $U_{p_1q_1^{-1},n_1} \cap \dots \cap U_{p_mq_m^{-1},n_m} \neq \emptyset$. Then there are $x_1, \dots, x_m \in \path (E)$ such that $p_1x_1x_1^{-1}q_1^{-1} = \dots = p_mx_mx_m^{-1}q_m^{-1},$ where $p_ix_ix_i^{-1}q_i^{-1} \in  U_{p_iq_i^{-1},n_i}$ for each $i$. It follows that $r(p_1x_1) = \dots = r(p_mx_m)$, and therefore if we take any $y \in \path (E)$ such that $s(y) = r(p_1x_1)$ and $|y| > \mathrm{max}\{n_1, \dots, n_m\}$, then $$p_1x_1yy^{-1}x_1^{-1}p_1^{-1} \in \bigcap_{i=1}^m U_{p_iq_i^{-1},n_i}.$$ But, by our choice of $E$, there are infinitely many possibilities for $y$, giving the required conclusion.

It remains to show that $G(E)$ is a topological semigroup, i.e.\ that multiplication is continuous in $\mathcal{O}$. Thus let $\mu, \nu \in G(E)$, and let $U$ be an open neighborhood of $\mu\nu$. We wish to find open neighborhoods $V$ and $W$ of $\mu$ and $\nu$, respectively, such that $VW \subseteq U$. If either of $\mu$ or $\nu$ is $0$, then taking the corresponding neighborhood to be $\{0\}$, the desired result is clear. Thus let us assume that $0\neq \mu = pq^{-1}$ and $0\neq \nu = tz^{-1}$ for some $p,q,t,z \in \path (E)$. If $\mu\nu = 0$, then $U_{pq^{-1},0}U_{tz^{-1},0} = \{0\}$, again gives the desired result. Let us therefore suppose that $\mu\nu \neq 0$, in which case we may also assume that $U = U_{ab^{-1},n}$ for some $n \in \N$ and $a,b \in \path (E)$. Then there must be some $x \in \path (E)$ such that either $q=tx$ or $t=qx$. Let us assume that the latter holds, as the former case can be handled similarly. Thus $$\mu\nu = pq^{-1}tz^{-1} = pq^{-1}qxz^{-1} = pxz^{-1}.$$ To conclude our construction, it is enough to show that $\tau\theta \in U_{pxz^{-1},n} \, (=U_{ab^{-1},n})$ for all $\tau \in U_{pq^{-1},n}$ and $\theta \in U_{tz^{-1},n}$. Write $\tau = pyy^{-1}q^{-1}$ and $\theta = tww^{-1}z^{-1}$ for some $y,w \in \path (E)$, where $|y|$ and $|w|$ are each either $0$ or greater than $n$. Then $$\tau\theta = pyy^{-1}q^{-1}tww^{-1}z^{-1} = pyy^{-1}xww^{-1}z^{-1}.$$ Since $\mu\nu \neq 0$, we also have $\tau\theta \neq 0$, and thus, either $y=xwv$ or $xw=yv$ for some $v \in \path (E)$. Again, let us assume that $xw=yv$, since the other case can be dispatched similarly. Thus, $$\tau\theta = pyy^{-1}yvw^{-1}z^{-1} = pyvw^{-1}z^{-1} = pxww^{-1}z^{-1}\in U_{pxz^{-1},n },$$ as desired, since $|w|$ is either $0$ or greater than $n$.
\end{example}

The next result is a generalization of \cite[Theorem I.3]{ES} to arbitrary graph inverse semigroups.

\begin{theorem} \label{complement_ideal}
Let $E$ be a graph, and suppose that $G(E)$ is a dense subsemigroup of a Hausdorff topological semigroup $S$. Then the following hold.
\begin{enumerate}
\item[$(1)$] $G(E) \setminus \{0\}$ is open in $S$.
\item[$(2)$] $(S\setminus G(E)) \cup \{0\}$ is an ideal of $S$.
\end{enumerate}
\end{theorem}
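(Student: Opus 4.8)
The plan is to establish (2) first, by a direct argument using the finiteness in Lemma~\ref{1-1_lemma}(2), and then to deduce (1) from it.

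For (2), let $\sigma\in(S\setminus G(E))\cup\{0\}$ and $\tau\in S$; I must show $\sigma\tau,\tau\sigma\in(S\setminus G(E))\cup\{0\}$. The case $\sigma=0$ is handled by the basic fact recalled in Section~\ref{top-sect} (that $0$ is a zero of $S$). So assume $\sigma\in S\setminus G(E)$ and suppose, for a contradiction, that $\sigma\tau=\mu\in G(E)\setminus\{0\}$ (the case $\tau\sigma\in G(E)\setminus\{0\}$ is symmetric, using the other half of Lemma~\ref{1-1_lemma}(2)). Since $G(E)$ carries a Hausdorff semigroup topology as a subspace of $S$, Theorem~\ref{discrete} provides an open $O\subseteq S$ with $O\cap G(E)=\{\mu\}$ (so in particular $0\notin O$, as $0\in G(E)$). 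Continuity of multiplication gives open $V\ni\sigma$ and $W\ni\tau$ with $VW\subseteq O$; since $0\notin O$ we have $0\notin W$, so density lets us fix some $\rho_0\in(W\cap G(E))\setminus\{0\}$. Then every $\nu\in V\cap G(E)$ satisfies $\nu\rho_0\in O\cap G(E)=\{\mu\}$, so $V\cap G(E)\subseteq\{\nu\in G(E):\nu\rho_0=\mu\}$, which is finite by Lemma~\ref{1-1_lemma}(2). But $\sigma\notin G(E)$, $S$ is $T_1$, and $G(E)$ is dense, which forces every open neighbourhood of $\sigma$ to meet $G(E)$ in infinitely many points (otherwise one could shrink it to miss $G(E)$ entirely) --- a contradiction.

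For (1), set $I=(S\setminus G(E))\cup\{0\}$; by (2) it is an ideal, hence so is its closure $\overline I$. Since $G(E)\setminus\{0\}=S\setminus I$, the assertion is equivalent to $\overline I=I$, i.e.\ to $\overline I\cap(G(E)\setminus\{0\})=\emptyset$. Suppose some $\mu=pq^{-1}$, with $r(p)=r(q)=v$, lies in $\overline I\cap(G(E)\setminus\{0\})$. Because $\overline I$ is an ideal, $v=p^{-1}\mu q\in\overline I$, so we may assume from the start that $v\in\overline I\cap E^{0}$. By Theorem~\ref{discrete}, $v$ is isolated in $G(E)$, so there is an open $O_0\subseteq S$ with $O_0\cap G(E)=\{v\}$. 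The next step is to show that $v$ is in fact a limit point of the subsemigroup $vSv$: starting from a net $(z_i)\to v$ with $z_i\in I$, put $w_i=vz_iv\in vSv$, so that $w_i\to v$; a short case-check using (2) shows $w_i\ne v$ for every $i$ (if $z_i\in S\setminus G(E)$ then $z_iv\in I$ and hence $vz_iv\in I$, while $z_i=0$ or $z_iv=0$ gives $vz_iv=0\ne v$). Thus $(w_i)$ is a net in $vSv\setminus\{v\}$ converging to $v$.

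Finally, I would invoke the general fact that $vG(E)v$ is dense in $vSv$: for $x\in vSv$ and an open neighbourhood $N'\cap vSv$ of $x$, continuity of the triple product yields open $A,C\ni v$ and $B\ni x$ with $ABC\subseteq N'$, and any $\beta\in B\cap G(E)$ gives $v\beta v$ in that neighbourhood. Since $v$ is a limit point of $vSv$, the set $(O_0\cap vSv)\setminus\{v\}$ is a nonempty open subset of $vSv$, hence meets the dense set $vG(E)v\subseteq G(E)$; but $(O_0\cap vSv)\setminus\{v\}$ is disjoint from $G(E)$ because $O_0\cap G(E)=\{v\}$. This contradiction shows that no vertex lies in $\overline I$, so $\overline I=I$ and $G(E)\setminus\{0\}$ is open. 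The step I expect to be the crux is this reduction into $vSv$: instead of trying to describe the topology of $S$ near a vertex $v$ directly --- which the proofs of Lemma~\ref{1-2_lemma} and Theorem~\ref{discrete} suggest would demand a delicate analysis of the edges at $v$ --- one passes to the small semigroup $vSv$, where the dense subset $vG(E)v$ is still discrete away from $0$ and the limit-point hypothesis collapses immediately; making the two reductions (to $v\in\overline I$, and to a net inside $vSv$) precise is exactly where part (2) and the discreteness coming from Theorem~\ref{discrete} are used.
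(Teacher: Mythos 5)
Your argument is correct, but it inverts the paper's logical structure and is substantially heavier in part (1). The paper proves (1) first and directly: if $U\subseteq S$ is open with $U\cap (G(E)\setminus\{0\})=\{\mu\}$ (Theorem~\ref{discrete}), then density of $G(E)$ gives $\overline{U}=\overline{U\cap G(E)}\subseteq\{\mu,0\}$, so $U\subseteq\{\mu,0\}$ and Hausdorffness isolates $\mu$ in $S$; the proof of (2) then exploits the openness of $G(E)\setminus\{0\}$ in $S$ to shrink $U,V$ until $UV=\{\mu\nu\}$ and contradicts Lemma~\ref{1-1_lemma}(2). You instead prove (2) first --- your argument is essentially the paper's proof of (2), except that you replace the appeal to (1) by using the subspace discreteness from Theorem~\ref{discrete} directly, which works and shows (2) does not actually need (1) --- and then you deduce (1) from (2) by the localization at a vertex: passing from $\mu=pq^{-1}\in\overline{I}$ to $v\in\overline{I}\cap E^0$ via the ideal $\overline{I}$, producing a net in $vSv\setminus\{v\}$ converging to $v$, and playing the density of $vG(E)v$ in $vSv$ against the isolation of $v$. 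This is correct, but it buys nothing over the paper's four-line proof of (1), which uses only discreteness, density, and Hausdorffness and requires neither (2) nor any structure of $vSv$; your route does, however, make (2) self-contained, whereas in the paper (2) depends on (1). One small point you should make explicit in both parts: Theorem~\ref{discrete} only gives an open $O$ with $O\cap(G(E)\setminus\{0\})=\{\mu\}$; to get $O\cap G(E)=\{\mu\}$, i.e.\ to exclude $0$, you must additionally intersect $O$ with a neighborhood of $\mu$ missing $0$ (available since $S$ is Hausdorff). This is a one-line fix, but it genuinely matters at the end of your part (1), where $0\in vG(E)v\cap vSv$ would otherwise not contradict the isolation of $v$.
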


\begin{proof}
(1) By Theorem~\ref{discrete}, the topology on $G(E) \setminus \{0\}$ inherited from $S$ must be discrete. Thus for any $\mu \in G(E)\setminus \{0\}$, there must be an open subset $U \subseteq S$ such that $U \cap (G(E)\setminus \{0\}) = \{\mu\}$. Since $G(E)$ is dense in $S$, $U \cap G(E)$ is dense in the closure $\overline{U}$ of $U$ in $S$. Hence $\overline{U} = \overline{U \cap G(E)}\subseteq \{\mu, 0\}$, and so either $U = \{\mu\}$ or $U = \{\mu, 0\}$. But, since the topology on $S$ is Hausdorff, in either case we conclude that $\{\mu\}$ is open in $S$, from which the statement follows.

\vspace{\baselineskip}

(2) Let $\mu \in (S\setminus G(E)) \cup \{0\}$ and $\nu \in S$ be any elements. We wish to show that $\mu\nu \in (S\setminus G(E)) \cup \{0\}$ (that $\nu\mu \in (S\setminus G(E)) \cup \{0\}$ can be shown similarly). We may assume that $\mu \neq 0 \neq \nu$, since otherwise the claim follows from the fact that $0\cdot S = \{0\} = S \cdot 0$, mentioned in Subsection~\ref{top-sect}. Seeking a contradiction, suppose that $\mu\nu \in G(E)\setminus \{0\}$. Since $G(E)\setminus \{0\}$ is open (by (1)) and hence discrete in $S$ (by Theorem~\ref{discrete}), we can find open neighborhoods $U$ of $\mu$ and $V$ of $\nu$ such that $UV =\{\mu\nu\}$. Also, $U\cap G(E)$ must be infinite, since $\mu$ is a limit point of $G(E)\setminus \{0\}$, and there must be a point $\tau \in V \cap (G(E)\setminus \{0\})$. Hence $$\{\mu\nu\} = UV \supseteq (U \cap (G(E)\setminus \{0\}))\cdot (V \cap (G(E)\setminus \{0\})) \supseteq (U \cap (G(E)\setminus \{0\})) \tau \neq \emptyset,$$ which implies that $(U \cap (G(E)\setminus \{0\})) \tau = \{\mu\nu\}$, in contradiction to Lemma~\ref{1-1_lemma}(2). Therefore, $\mu\nu \in (S\setminus G(E)) \cup \{0\}$, as desired.
\end{proof}

We next show that while $G(E)\setminus \{0\}$ must be discrete in any Hausdorff semigroup topology, $G(E)$ admits a non-discrete metrizable topology, as long as $E$ has paths of arbitrary length. (This is the case, for instance, for any graph having cycles or an infinite path.)

\begin{proposition} \label{non-disc-top}
Let $E$ be a graph having paths of arbitrary $($finite$)$ length, define $d'(0,0)=0$ and $$d'(pq^{-1},0) = d'(0, pq^{-1}) = \frac{1}{\mathrm{min}\{|p|,|q|\} + 1}$$ for all $p,q \in \path (E)$, and extend $d'$ to a map $d: G(E) \times G(E) \to \R$ via $$d(\mu, \nu) = \left\{ \begin{array}{ll}
d'(\mu, 0) + d'(\nu, 0) & \text{if } \, \mu \neq \nu \\
0 & \text{if } \, \mu = \nu
\end{array}\right..$$ Then $d$ is a metric that induces a non-discrete semigroup topology on $G(E)$.
\end{proposition}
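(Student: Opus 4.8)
The map $d$ is the ``radial'' metric centred at $0$: it puts each nonzero element $\mu$ at distance $d'(\mu,0)\in(0,1]$ from $0$ and otherwise measures distance ``through'' $0$. Verifying that $d$ is a metric is a routine case check, the only delicate axiom being the triangle inequality $d(\mu,\tau)\le d(\mu,\nu)+d(\nu,\tau)$; this is immediate when $\mu=\tau$ or $\nu\in\{\mu,\tau\}$, and otherwise follows from $d(\mu,\nu)+d(\nu,\tau)=d(\mu,\tau)+2\,d'(\nu,0)$ together with $d'(\nu,0)\ge0$. At this stage I would record two features of the induced topology that drive everything else: every nonzero $\mu$ is isolated, since any $\nu\ne\mu$ has $d(\mu,\nu)\ge d'(\mu,0)$ and hence $B(\mu,d'(\mu,0))=\{\mu\}$; and inversion is an isometry of $d$, since $d'(\mu,0)=d'(\mu^{-1},0)$. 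Non-discreteness is then immediate: as $E$ has paths of every length, for each $m>0$ there is a path $x$ with $|x|+1>1/m$, so $xx^{-1}\in B(0,m)\setminus\{0\}$ and $0$ is not isolated.

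The substance is continuity of multiplication, and the engine is a length estimate. Recall that for nonzero $\mu=pq^{-1}$ and $\nu=tz^{-1}$ with $\mu\nu\ne0$ one has $q^{-1}t\ne0$, so either $t=qw$ or $q=tw$ for some path $w$ with $|w|=\bigl|\,|q|-|t|\,\bigr|$; correspondingly $\mu\nu=pwz^{-1}$ or $\mu\nu=p(zw)^{-1}$, and in either case both ``legs'' of $\mu\nu$ have length at least $\min\{|p|,|q|,|t|,|z|\}$. As $x\mapsto 1/(x+1)$ is decreasing, this gives
\[
d'(\mu\nu,0)\ \le\ \max\{d'(\mu,0),\,d'(\nu,0)\}.\qquad(\star)
\]
I would then sharpen this for a \emph{fixed} factor: if $\nu=tz^{-1}$ is fixed and $\min\{|p|,|q|\}>\max\{|t|,|z|\}$, only the case $q=tw$ can occur, and then the legs of $\mu\nu$ have length at least $\min\{|p|,|q|\}-\max\{|t|,|z|\}$; an identical computation handles $\nu\mu$. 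Hence, for each fixed $\nu\in G(E)$ and each $\epsilon>0$ there is $\delta_\nu(\epsilon)>0$ such that $d'(\mu',0)<\delta_\nu(\epsilon)$ forces both $d'(\mu'\nu,0)<\epsilon$ and $d'(\nu\mu',0)<\epsilon$ whenever the relevant product is nonzero; call this $(\star\star)$. Deriving $(\star\star)$ is the main bookkeeping obstacle, and the reason it is needed is that $(\star)$ alone is too weak here: $d'(\nu,0)$ can be as large as $1$, e.g.\ when $\nu\in E^0$.

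Granting $(\star)$ and $(\star\star)$, continuity is a case analysis. Fix $\mu,\nu\in G(E)$ and $\epsilon>0$, and suppose $d(\mu',\mu)<\delta$ and $d(\nu',\nu)<\delta$. If $\mu\nu\ne0$, or if $\mu\nu=0$ but $\mu\ne0\ne\nu$, then $\mu,\nu$ are isolated, so $\delta:=\min\{d'(\mu,0),d'(\nu,0)\}$ forces $\mu'=\mu$, $\nu'=\nu$, whence $\mu'\nu'=\mu\nu$. If $\mu\nu=0$ with exactly one of $\mu,\nu$ equal to $0$, say $\mu=0\ne\nu$, then $\delta\le d'(\nu,0)$ forces $\nu'=\nu$ (so $\mu'\nu'=\mu'\nu$), and shrinking $\delta\le\delta_\nu(\epsilon)$ makes $d'(\mu'\nu,0)<\epsilon$ when $\mu'\nu'\ne0$, there being nothing to check when $\mu'\nu'=0$; the case $\nu=0\ne\mu$ is symmetric via the other half of $(\star\star)$. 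Finally, if $\mu=\nu=0$, take $\delta\le\epsilon$: if $\mu'\nu'=0$ there is nothing to prove, and otherwise $(\star)$ gives $d'(\mu'\nu',0)\le\max\{d'(\mu',0),d'(\nu',0)\}<\epsilon$, since $d(\mu',0)=d'(\mu',0)<\delta$ and likewise for $\nu'$. In every case $d(\mu'\nu',\mu\nu)<\epsilon$, so multiplication is continuous, and since $d$ is a non-discrete metric the proof is complete.
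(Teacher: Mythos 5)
Your proposal is correct and follows essentially the same route as the paper: nonzero elements are isolated (so singleton neighborhoods handle the case $\mu\neq 0\neq\nu$), products with a fixed nonzero factor are controlled by a length estimate when the other factor is close to $0$, and the case $\mu=\nu=0$ follows from the bound $d'(\mu'\nu',0)\le\max\{d'(\mu',0),d'(\nu',0)\}$; your $(\star)$ and $(\star\star)$ simply package the computations the paper carries out inline.
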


\begin{proof}
It is clear that for all $\mu, \nu \in G(E)$ we have $d(\mu, \nu) \geq 0$,  $d(\mu, \nu) = d(\nu, \mu)$, and  $d(\mu, \nu) = 0$ if only if $\mu =\nu$. It is also easy to see that $d$ satisfies the triangle inequality, and hence $d$ is a metric on $G(E)$. As $E$ has paths of arbitrary length, the topology on $G(E)$ induced by $d$ is not discrete, since $B(0, 1/n) \setminus \{0\} \neq \emptyset$ for every positive $n \in \N$. To verify that this topology is a semigroup topology, let $\mu, \nu \in G(E)$ be any elements, and let $U$ be an open neighborhood of $\mu\nu$. We may assume that $U = B(\mu\nu, 1/n)$ for some positive $n \in \N$, and we wish to find open neighborhoods $V$ and $W$ of $\mu$ and $\nu$, respectively, such that $VW \subseteq U$. 

We begin by noting that if $\mu \neq 0$, then $B(\mu, d(\mu,0)) = \{\mu\}$, and therefore $\{\mu\}$ is an open set. It follows that if $\mu, \nu \neq 0$, then we may take $V = \{\mu\}$ and $W = \{\nu\}$ above. Next suppose that $\mu =0 = \mu\nu$ but $\nu \neq 0$. Write $\nu = pq^{-1}$ for some $p,q \in \path (E)$, let $W = \{\nu\}$, and let $V = B(0, 1/(|p|+n))$. Then for all $\rho = tz^{-1} \in V \setminus \{0\}$ ($t,z \in \path (E)$) we have $$d(\rho, 0) =  d(tz^{-1},0) = \frac{1}{\mathrm{min}\{|t|,|z|\} + 1} <  \frac{1}{|p|+n},$$ and hence $\mathrm{min}\{|t|,|z|\} > |p| +n-1$. It follows that either $z^{-1}pq^{-1} = 0$ or $(z^{-1}pq^{-1})^{-1} = qp^{-1}z \in \path (E)$ and $|(z^{-1}pq^{-1})^{-1}| > n -1 $. Thus, either $\rho\nu =0$ or, otherwise, $$d(\rho \nu,0) = d(tz^{-1}pq^{-1} ,0) = \frac{1}{\mathrm{min}\{|t|,|(z^{-1}pq^{-1})^{-1}|\} + 1} <  \frac{1}{n},$$ from which we see that $\rho \nu  \in B(0, 1/n) = B(\mu\nu, 1/n) = U$, and hence $VW \subseteq U$. A similar argument shows that if $\nu =0 = \mu\nu$ but $\mu \neq 0$, then we can find open neighborhoods $V$ and $W$ of $\mu$ and $\nu$, respectively, such that $VW \subseteq U$.

Finally, suppose that $\mu = \nu  = 0$, and let $V=W =B(0, 1/n)$.  Then for all  $\theta = pq^{-1} \in B(0, 1/n) (=V)$ and $\rho = tz^{-1} \in B(0, 1/n) (=W)$, with $p,q, t,z \in \path (E)$, either $\theta\rho = 0$, or $$d(\theta\rho,0) = d(pq^{-1}tz^{-1} ,0) \leq \frac{1}{\mathrm{min}\{|p|,|z|\} + 1} <  \frac{1}{n}.$$ Therefore $\theta\rho \in B(0, 1/n) = U$, as desired.
\end{proof}

\begin{corollary} \label{empty-cl-eg}
Let $E$ be a finite graph having at least one cycle, and suppose that $G(E)$ is a subsemigroup of a Hausdorff topological semigroup $S$. If $G(E)$ inherits from $S$ the topology induced by the metric $d$ from Proposition~\ref{non-disc-top}, then $\overline{G(E)} = G(E)$. 
\end{corollary}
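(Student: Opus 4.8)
The plan is to prove that $\overline{G(E)}\setminus G(E)=\emptyset$. After replacing $S$ by $\overline{G(E)}$ (which is a subsemigroup of $S$, since multiplication is continuous), we may assume that $G(E)$ is dense in a Hausdorff topological semigroup $\overline{G(E)}$, whose induced topology on $G(E)$ is still the one determined by $d$; recall also, from Subsection~\ref{top-sect}, that $0\cdot\nu=\nu\cdot 0=0$ for all $\nu\in\overline{G(E)}$. Suppose, towards a contradiction, that some $\mu\in\overline{G(E)}\setminus G(E)$ exists, and fix a net $(\mu_\alpha)$ in $G(E)$ with $\mu_\alpha\to\mu$. Since $\mu\notin G(E)$ and $\overline{G(E)}$ is $T_1$, for each fixed $\nu\in G(E)$ we must have $\mu_\alpha\neq\nu$ eventually (otherwise $\nu$ would lie in every neighbourhood of $\mu$, forcing $\mu=\nu\in G(E)$); in particular, after discarding a tail we may assume $\mu_\alpha\neq 0$ for all $\alpha$.

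Write $\mu_\alpha=p_\alpha q_\alpha^{-1}$ with $p_\alpha,q_\alpha\in\path(E)$. Because $E$ is finite, for each $L\in\N$ there are only finitely many elements $pq^{-1}$ of $G(E)$ with $|p|+|q|\le L$, and the net eventually avoids each of them; hence $|p_\alpha|+|q_\alpha|\to\infty$. Passing to a subnet we may assume $|q_\alpha|\to\infty$; if instead $|q_\alpha|$ remained bounded along a cofinal subnet then $|p_\alpha|\to\infty$ there, a case handled by the obvious left--right mirror of the argument below.

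The crux is to ``follow a walk'' in $E$. Using finiteness of $E$, we pass successively to cofinal subnets on which ever-longer initial segments of $q_\alpha$ are constant; done compatibly, this yields an infinite walk $\omega$ in $E$ together with nested cofinal subnets $A_1\supseteq A_2\supseteq\cdots$ such that, for each $k$, the length-$k$ initial segment of $\omega$ is an initial segment of $q_\alpha$ for every $\alpha\in A_k$. Since $E^0$ is finite, some vertex $v_0$ occurs as the range of the length-$n$ initial segment of $\omega$ for infinitely many $n$; let $n_1<n_2<\cdots$ be these values, and let $\gamma_j$ be the length-$n_j$ initial segment of $\omega$, so $r(\gamma_j)=v_0$ and $|\gamma_j|\to\infty$. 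Re-indexing the net by the directed set $D=\{(\alpha,j):\alpha\in A_{n_j}\}$ with the coordinatewise order (cofinality of the $A_{n_j}$ makes $D$ directed) gives a subnet $(\mu_\delta)_{\delta\in D}$ still converging to $\mu$, and for $\delta=(\alpha,j)$ the path $\gamma_j$ is an initial segment of $q_\alpha=:q_\delta$, say $q_\delta=\gamma_j\zeta_\delta$ with $\zeta_\delta\in\path(E)$. Set $b_\delta:=\gamma_j\gamma_j^{-1}$. On the one hand $d(b_\delta,0)=1/(|\gamma_j|+1)\to 0$, so $b_\delta\to 0$ in $\overline{G(E)}$; on the other hand, using $\gamma_j^{-1}\gamma_j=r(\gamma_j)=v_0$ and $q_\delta=\gamma_j\zeta_\delta$ one computes $\mu_\delta b_\delta=p_\delta q_\delta^{-1}\gamma_j\gamma_j^{-1}=p_\delta\zeta_\delta^{-1}\gamma_j^{-1}\gamma_j\gamma_j^{-1}=p_\delta\zeta_\delta^{-1}\gamma_j^{-1}=p_\delta q_\delta^{-1}=\mu_\delta$. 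By joint continuity of multiplication, $\mu_\delta b_\delta\to\mu\cdot 0=0$, while $\mu_\delta b_\delta=\mu_\delta\to\mu$; by Hausdorffness $\mu=0\in G(E)$, a contradiction. Hence $\overline{G(E)}=G(E)$.

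The step I expect to require the most care is the net bookkeeping in the third paragraph: since $\overline{G(E)}$ need not be first countable one really must argue with nets, and extracting a subnet along which $q_\alpha$ has initial segments of unbounded length all terminating at a single vertex $v_0$ is a mild ``diagonal'' argument, encoded above by the index set $D$. Everything else is a routine computation with the normal form $xy^{-1}$ in $G(E)$. (One could alternatively invoke Theorem~\ref{complement_ideal} to record at the outset that $(\overline{G(E)}\setminus G(E))\cup\{0\}$ is an ideal of $\overline{G(E)}$ and that nonzero elements of $G(E)$ are isolated in $\overline{G(E)}$, but this is not strictly needed above.) Finally, the hypothesis that $E$ has a cycle serves only to make the statement non-trivial --- if $E$ is finite without cycles then $G(E)$ is finite, $d$ induces the discrete topology, and $G(E)$ is automatically closed in $S$; what is used is the finiteness of $E$, namely that it has finitely many paths of each length and that a sufficiently long path must repeat a vertex.
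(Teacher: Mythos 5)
Your argument is correct, and its engine is the same one the paper uses: an element $pq^{-1}$ with long $q$ (or long $p$) is fixed by right (resp.\ left) multiplication by an idempotent $qq^{-1}$ (resp.\ $pp^{-1}$) that is $d$-close to $0$, and this, combined with $\mu\cdot 0=0=0\cdot\mu$ and continuity of multiplication, forces the putative limit point $\mu$ to equal $0$. The difference is packaging. The paper never touches nets: it separates $0$ from $\mu$ by neighborhoods, chooses $m$ with $B_{G(E)}(0,1/m)$ inside the neighborhood of $0$, uses continuity at $(\mu,0)$ and $(0,\mu)$ to shrink to $U,V$ with $(V\cap G(E))\cdot(U\cap G(E))\subseteq B_{G(E)}(0,1/m)$ and $B_{G(E)}(0,1/m)\cap V=\emptyset$, picks $xy^{-1}\in V\cap G(E)$ with $|x|\geq n$ or $|y|\geq n$ (possible because $E$ is finite and $V\cap G(E)$ is infinite), and gets the contradiction from $xy^{-1}=(xy^{-1})(yy^{-1})$ with $yy^{-1}\in B_{G(E)}(0,1/n)\subseteq U$. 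Your net version is sound, but most of your third paragraph is unnecessary: there is no need to extract the infinite walk $\omega$, the recurrent vertex $v_0$, or the re-indexed directed set $D$. Along the subnet on which $|q_\alpha|\to\infty$ simply set $b_\alpha=q_\alpha q_\alpha^{-1}$; then $\mu_\alpha b_\alpha=\mu_\alpha$ identically, $d(b_\alpha,0)=1/(|q_\alpha|+1)\to 0$, and joint continuity gives $\mu=\mu\cdot 0=0$ exactly as you conclude (with $b_\alpha=p_\alpha p_\alpha^{-1}$ on the left in the mirror case). So the K\"onig-style subnet bookkeeping that you rightly flag as the delicate step can be deleted wholesale, and what remains is essentially the paper's proof transcribed into the language of nets, trading its explicit ball estimates for convergence statements; the paper's formulation is shorter, while yours makes the reason for the contradiction ($\mu=\mu\cdot 0$) slightly more transparent. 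Your closing remark about the role of the cycle hypothesis is also accurate: it is there to guarantee paths of arbitrary length, so that Proposition~\ref{non-disc-top} applies and the statement is not vacuous.
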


\begin{proof}
Assume, to the contrary, that there exists $\mu \in \overline{G(E)}\setminus G(E)$. Since our topology is Hausdorff, there are open neighborhoods $U'$
and $V'$ (in $S$) of $0$ and $\mu$, respectively, such that $U'\cap V'=\emptyset$. Let $m\in \N$ be such that  $$B_{G(E)}(0, 1/m) : = \{\nu \in G(E) : d(0,\nu) < 1/m\} \subseteq U' \cap G(E).$$ Since $0 \cdot \mu=0 = \mu \cdot 0$ (as discussed in Subsection~\ref{top-sect}), by the continuity of multiplication, there are open subneighborhoods $U \subseteq U'$ and $V \subseteq V'$, such that $$(U \cap G(E))\cdot (V\cap G(E)), (V \cap G(E))\cdot (U\cap G(E))\subseteq B_{G(E)}(0, 1/m)$$ (and $B_{G(E)}(0, 1/m)\cap V=\emptyset$). Let $n\in \N$ be such that $n\geq m$ and $B_{G(E)}(0, 1/n)\subseteq U \cap G(E)$. Since $\mu$ is a limit point of $G(E)$ and $E$ is finite, it follows that $V\cap G(E)$ is infinite, and there exists $xy^{-1}\in V\cap G(E)$ such that $|x|\geq n$ or $|y|\geq n$. We may assume that $|y|\geq n$, since the case where $|x| \geq n$ leads to an analogous argument. Then $xy^{-1}=xy^{-1}yy^{-1}$. But, $$d(0, yy^{-1})=\frac{1}{|y|+1}< \frac{1}{n},$$ and hence $yy^{-1}\in B_{G(E)}(0, 1/n) \subseteq U\cap G(E)$. Therefore $$xy^{-1}=(xy^{-1})(yy^{-1})\in (V\cap G(E))\cdot (U\cap G(E))\subseteq B_{G(E)}(0, 1/m),$$ contradicting $B_{G(E)}(0, 1/m)\cap V=\emptyset$. Thus $ \overline{G(E)}= G(E)$.
\end{proof}

\section{Local Compactness}

A Hausdorff space $X$ is \emph{locally compact} if for every $x\in X$ there exists an open set $U$ such that $x\in U$ and $\overline{U}$ is compact. The main result of this section is that for finite graphs $E$, the discrete topology is the only possible locally compact Hausdorff semigroup topology on $G(E)$.

We begin with two short lemmas. The first is a well-known fact, but we provide a proof because we did not find a suitable reference.

\begin{lemma} \label{loc-comp-lemma}
Every countable locally compact Hausdorff space is metrizable.
\end{lemma}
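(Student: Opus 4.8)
The plan is to invoke the Urysohn metrization theorem, which states that a regular second-countable space is metrizable. So the task reduces to showing that a countable locally compact Hausdorff space $X$ is regular and second countable. Regularity is automatic here: every locally compact Hausdorff space is regular (indeed Tychonoff), since given $x \in X$ and a closed set $C$ with $x \notin C$, one takes an open $U \ni x$ with $\overline{U}$ compact, intersects $U$ with $X \setminus C$ to get an open neighbourhood $V$ of $x$ with $\overline{V}$ compact and $\overline{V} \cap C$ possibly nonempty; then working inside the compact Hausdorff (hence normal) space $\overline{V}$ one separates $x$ from the closed set $\overline{V} \cap C$, and a little care with the interiors yields disjoint open sets in $X$ separating $x$ from $C$. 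So I would state this and cite it as standard, or give the two-line argument.

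The real content is second countability. First I would build, for each $x \in X$, an open neighbourhood $U_x$ with $\overline{U_x}$ compact, using local compactness. Since $X$ is countable, the family $\{U_x : x \in X\}$ is a countable open cover of $X$. Now I claim that the countable collection
$$
\mathcal{B} = \{\, U_x \cap W : x \in X,\ W \text{ open in } \overline{U_x}'s\text{-topology},\ W \text{ from a countable base of } \overline{U_x}\,\}
$$
is a base for $X$ — but to make this precise I need each compact Hausdorff space $\overline{U_x}$ to be second countable. That holds because $\overline{U_x}$ is a countable compact Hausdorff space: a countable compact Hausdorff space is second countable (it is metrizable — e.g. it embeds in a countable product, or one argues directly that it is first countable by regularity plus countability, hence, being compact, second countable; alternatively a countable regular space is second countable outright, since for each point the countably many members of a neighbourhood base over all points form a countable base). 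In fact the cleanest route avoids the cover entirely: a countable regular space is automatically second countable, because regularity lets us, for each pair $(x, y)$ with $y$ in some basic-looking need, shrink neighbourhoods, and the union over the countably many points $x$ of a neighbourhood base at $x$ is then a countable base. I would verify the base property directly: given $x \in X$ and open $O \ni x$, use regularity to find open $V$ with $x \in V \subseteq \overline{V} \subseteq O$; since $X$ is countable, the collection of all such ``regular'' neighbourhoods, taken over all points, is countable, and one of them sits between $x$ and $O$.

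Putting it together: $X$ is regular (from local compactness plus Hausdorff) and second countable (from regular plus countable), hence metrizable by Urysohn. The main obstacle is simply organizing which standard facts to use so the argument is genuinely self-contained rather than circular — in particular, the implication ``countable $+$ regular $\Rightarrow$ second countable'' must be proved from scratch (it is short: for each $x$ pick, for every open $O \ni x$, a regular open neighbourhood between them; since there are only countably many $x$, and we only need one regular neighbourhood inside each $O$, a diagonal/countability argument gives a countable base), and I must make sure local compactness is actually used to get regularity and is not quietly assumed to give more. I expect the write-up to be under half a page.
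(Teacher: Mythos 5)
There is a genuine gap: the step you call the ``cleanest route'' --- that a countable regular space is automatically second countable --- is false, and your counting argument for it does not work. The choices of shrunken neighbourhoods are indexed by pairs $(x,O)$ with $O$ an open set containing $x$; the set of points is countable, but the topology may be uncountable, so ``one regular neighbourhood inside each $O$, over all points'' need not be a countable family. A concrete counterexample is the Arens--Fort space (equivalently, $\N \cup \{p\}$ as a subspace of $\beta\N$ for a non-principal ultrafilter $p$): it is countable and regular, indeed normal, but not even first countable, hence not second countable. For the same reason your parenthetical justification that a countable compact Hausdorff space is first countable ``by regularity plus countability'' is not valid as stated. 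This is precisely where local compactness must enter, and your preferred route discards it. Your first, cover-based route is salvageable, but it needs a correct proof that each compact Hausdorff $\overline{U_x}$ (countable) is second countable; the standard repair is: in a countable $T_1$ space every singleton is a $G_\delta$, in a compact Hausdorff space a $G_\delta$ point has a countable neighbourhood base, and a countable first countable space is second countable (take the union of local bases). With that, regularity plus the countable base gives metrizability via Urysohn.

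For comparison, the paper takes a different and shorter path that avoids Urysohn and second countability altogether: a locally compact Hausdorff space is completely regular, its weight satisfies $w(X)\leq |X|$, and a Tychonoff space embeds in $[0,1]^{w(X)}$; for countable $X$ this is a subspace of the metrizable cube $[0,1]^{\N}$. Your approach, once the second-countability step is repaired as above, is a legitimate alternative and is more elementary in the sense that it uses Urysohn's metrization theorem rather than the weight estimate and the Tychonoff embedding theorem, but as written the key lemma it rests on is false.
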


\begin{proof}
Let $X$ be a locally compact Hausdorff space. According to~\cite[Theorem 3.3.1]{Engelking}, $X$ is completely regular. (A topological space $X$ is \emph{completely regular} or \emph{Tychonoff} if it is $T_1$, and given any closed subset $Y \subseteq X$ and any point $x \in X\setminus Y$, there is a continuous function $f : X \to [0,1] \subseteq \R$ such that $f(x) = 1$ and $f(Y)=\{0\}$.) Moreover, letting $w(X)$ denote the weight of $X$ (that is, the minimal cardinality of a base of $X$), \cite[Corollary 3.3.6]{Engelking} says that $w(X) \leq |X|$. Now, by Tychonoff's theorem~\cite[Theorem 2.3.23]{Engelking}, $X$ is homeomorphic to a subspace of $[0,1]^{w(X)}$. Hence, if $X$ is countable, then it is homeomorphic to a subspace of $[0,1]^{\N}$, which is metrizable. It follows that every countable locally compact Hausdorff space is metrizable.
\end{proof}

\begin{lemma}\label{pumping}
Let $E$ be a finite graph, and suppose that $\, \{x_n : n\in \N\}$ is an infinite subset of $\, \path (E)$. Then there exist an infinite subset $I$ of $\, \N$ and $\mu \in G(E)$ such that $\mu x_n \in \path (E)$ and $\, |\mu x_n|>|x_n|$ for all $n\in I$. 
\end{lemma}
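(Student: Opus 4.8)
The plan is to find, for infinitely many $n$, a single element $\mu\in G(E)$ which acts on $x_n$ by duplicating a fixed nonempty cycle occurring near the start of $x_n$, thereby increasing its length. To set this up I would first dispose of the short paths: since $E$ is finite there are only finitely many paths of any given length, so only finitely many of the $x_n$ have length at most $|E^0|$, and discarding these we may assume $|x_n|\ge |E^0|$ for every $n$ (this also shows, incidentally, that the lengths $|x_n|$ are unbounded). For each such $x_n$, the list of $|x_n|+1$ vertices visited by $x_n$ must repeat some value among its first $|E^0|+1$ entries, because $E$ has only $|E^0|$ vertices; choosing such a repetition yields a factorization $x_n = p_n c_n t_n$ in $\path(E)$, where $p_n$ is a prefix with $|p_n|\le |E^0|$, where $c_n$ is a closed path based at $r(p_n)$ (that is, $s(c_n)=r(c_n)=r(p_n)$) with $1\le |c_n|\le |E^0|$, and where $t_n$ is the remaining suffix.

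The key point is then that there are only finitely many possibilities for the pair $(p_n,c_n)$, since both entries are paths of bounded length in a finite graph; so by the pigeonhole principle there is an infinite set $I\subseteq\N$ on which $(p_n,c_n)$ is equal to a fixed pair $(p,c)$. I would then take $\mu = (pc)p^{-1}\in G(E)$, which is a nonzero element because $r(pc)=r(c)=r(p)$. For $n\in I$, using $p^{-1}p=r(p)$ together with the defining relations (V), (E1), (E2), (CK1) of $G(E)$, one computes $\mu x_n = (pc)\,p^{-1}(p c t_n) = p c^2 t_n$, which lies in $\path(E)$, and $|\mu x_n| = |p|+2|c|+|t_n| = |x_n|+|c| > |x_n|$ since $|c|\ge 1$ --- exactly as required.

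I do not expect a genuine obstacle here. The only step needing care is the bookkeeping in the factorization: one must locate the repeated vertex within a bounded initial segment of $x_n$, so that the cycle $c_n$ to be duplicated has length bounded independently of $n$ (an arbitrary repeated vertex could force $c_n$ to be long, which would defeat the pigeonhole argument). Once the pieces $p_n$ and $c_n$ are uniformly bounded, the choice of $\mu$ and the verification that $\mu x_n = p c^2 t_n$ amount to a short, routine computation with the relations of $G(E)$.
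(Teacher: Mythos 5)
Your proposal is correct and is essentially the paper's own argument: the paper likewise extracts, by finiteness of $E$, a fixed prefix $t$ and fixed nonempty cycle $p$ with $x_n = tpu_n$ for all $n$ in an infinite set $I$, and takes $\mu = tpt^{-1}$, which is exactly your $(pc)p^{-1}$ with the roles of the letters renamed, yielding $\mu x_n = tppu_n$ of length $|x_n|+|p|$. The only difference is that you spell out the pigeonhole bookkeeping (bounded prefix and cycle taken from the first $|E^0|+1$ vertices) that the paper leaves implicit.
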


\begin{proof}
Since $E$ is finite, there are $p, t \in \path (E)$, where $p$ is a cycle that is not a vertex, and an infinite subset $I \subseteq \N$, such that for all $n \in I$ we have $x_n = tpu_n$, for some $u_n \in \path (E)$. Letting $\mu = tpt^{-1}$, we see that $$|\mu x_n| = |tpt^{-1}tpu_n| = |tppu_n| > |tpu_n| = |x_n|$$ for all $n \in I$.
\end{proof}

\begin{theorem} \label{loc-comp}
If $E$ is a finite graph, then the only locally compact Hausdorff semigroup topology on $G(E)$ is the discrete topology. 
\end{theorem}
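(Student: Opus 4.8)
The plan is to combine Theorem~\ref{discrete} with local compactness, and to use Lemmas~\ref{loc-comp-lemma} and~\ref{pumping} as the technical engines. By Theorem~\ref{discrete}, in any Hausdorff semigroup topology $\mathcal{O}$ on $G(E)$ the set $G(E)\setminus\{0\}$ is discrete, so $\mathcal{O}$ is the discrete topology precisely when $\{0\}$ is open; thus it suffices to show that $0$ cannot be a limit point. Assume, for contradiction, that $\mathcal{O}$ is locally compact Hausdorff but $0$ is a limit point. Since $E$ is finite, $\path(E)$ and hence $G(E)$ are countable, so $G(E)$ is metrizable by Lemma~\ref{loc-comp-lemma}; fix a compatible metric. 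Local compactness gives an open $U\ni 0$ with $\overline U$ compact. Because $G(E)\setminus\{0\}$ is discrete, $\overline U\setminus\{0\}$ is a discrete subspace of the compact space $\overline U$ whose only possible accumulation point is $0$, and since $0$ is not isolated this set is infinite; hence $\overline U=\{0\}\cup\{\mu_n:n\in\N\}$ with the $\mu_n$ distinct and $\mu_n\to 0$. Moreover $\overline U$ is itself a neighbourhood of $0$, so \emph{every} sequence of nonzero elements converging to $0$ is eventually a subsequence of $(\mu_n)$; equivalently, every neighbourhood of $0$ contains all but finitely many $\mu_n$. Call this the \emph{coinitiality} of $(\mu_n)$.

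Next I put the sequence into a usable form. Write $\mu_n=p_nq_n^{-1}$ with $p_n,q_n\in\path(E)$. Distinctness and finiteness of $E$ force $\max\{|p_n|,|q_n|\}\to\infty$; passing to a subsequence, assume it is strictly increasing and (finitely many paths of bounded length) that the vertices $s(p_n),s(q_n),r(p_n)$ are constant. Using the identities $p^{-1}(pq^{-1})=q^{-1}$ and $(pq^{-1})q=p$ together with continuity of left/right multiplication by a fixed element, one passes to a further subsequence and multiplies the $\mu_n$ on a suitable side by one fixed element of $G(E)$ to reduce to the case that the terms are paths $x_n\in\path(E)$ with $|x_n|\to\infty$; the dual situation, where the terms become co-paths $x_n^{-1}$, is handled by the symmetric argument on the other side. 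Since the transformed sequence still consists of nonzero elements converging to $0$, it remains coinitial.

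Now apply Lemma~\ref{pumping} to $\{x_n\}$: there is a fixed $\mu\in G(E)$ with $\mu x_n\in\path(E)$ and $|\mu x_n|>|x_n|$ for all $n$ in an infinite set; explicitly $\mu=tct^{-1}$ for a path $t$ and a nontrivial cycle $c$ with $x_n=tcu_n$, whence $\mu^k x_n=tc^{\,k+1}u_n\in\path(E)$ with $|\mu^k x_n|=|x_n|+k|c|$ for all $k\ge 1$. Iterating, and repeatedly invoking coinitiality together with continuity of multiplication by fixed elements, one pushes through a chain of consequences: for each fixed $k$, $\mu^k x_n\to 0$ as $n\to\infty$; tracking indices gives $\mu^k\to 0$ as $k\to\infty$; multiplying $\mu^k=tc^kt^{-1}$ on the right by $t$ and on the left by $t^{-1}$ gives $c^k\to 0$; and then the forced membership of the $c^k$ (and of the pumped elements $\mu^k x_n$) in the compact neighbourhood $\overline U$ yields an identity between two paths whose lengths grow at incompatible rates, or which cannot share their initial edges, giving the desired contradiction. (The precise bookkeeping mirrors the cycle/no-cycle dichotomy behind Lemma~\ref{1-2_lemma}(3)--(4), now applied at $0$ rather than at a vertex.) Hence $0$ is isolated and $\mathcal{O}$ is the discrete topology.

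The main obstacle, and where essentially all the work lies, is the last step. Because we have only a topological semigroup and \emph{not} a topological inverse semigroup, the inversion operation is unavailable, so the contradiction must be extracted purely from one-sided multiplications by fixed elements, from Lemma~\ref{pumping}, and from the rigidity of the compact neighbourhood $\overline U$; I expect the delicate points to be the normal-form reduction (and showing it is harmless), the deduction of $\mu^k\to 0$, and the final combinatorial analysis of how the path $t$ and the cycle $c$ can interact inside the finite graph $E$.
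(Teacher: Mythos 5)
Your framework matches the paper's setup: discreteness of $G(E)\setminus\{0\}$ from Theorem~\ref{discrete}, countability plus Lemma~\ref{loc-comp-lemma} to get a compatible metric, local compactness to make small ``annuli'' around $0$ finite so that every infinite subset of a small ball converges to $0$, and Lemma~\ref{pumping} as the engine. But there are two genuine gaps, and they are exactly where the content of the theorem lives. First, the normal-form reduction fails as stated: to turn $\mu_n=p_nq_n^{-1}\to 0$ into a sequence of paths you would have to multiply on the right by $q_n$ (or on the left by $p_n^{-1}$), and these multipliers vary with $n$; continuity of multiplication by a \emph{fixed} element gives nothing, and joint continuity does not help because $(q_n)$ has no convergent subsequence in the discrete part $G(E)\setminus\{0\}$. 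It can perfectly well happen that no sequence of paths or of inverse paths converges to $0$ even though products $p_nq_n^{-1}$ do (this is precisely the behaviour of the metric in Proposition~\ref{non-disc-top}), and the paper devotes its Case~1 to this situation: for each $y$ one forms the finite set $X_y=\{x\in\path(E): 0<d(xy^{-1},0)<1/N\}$, chooses $x_n\in X_{y_n}$ of \emph{maximal length}, and then Lemma~\ref{pumping} yields a single fixed $\mu$ with $|\mu x_n|>|x_n|$, so $\mu x_n\notin X_{y_n}$ and $d(\mu x_n y_n^{-1},0)\geq 1/N$ while $x_ny_n^{-1}\to 0$, contradicting continuity of left multiplication by $\mu$. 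That maximal-length device is what replaces your reduction, and nothing in your sketch substitutes for it.

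Second, in the case where a sequence of paths $x_n\to 0$ does exist, your chain ``$\mu^k x_n\to 0$ for each fixed $k$, hence $\mu^k\to 0$, hence $c^k\to 0$, hence a contradiction'' is not a proof: the step $\mu^k\to 0$ does not follow from what precedes it (to strip off $x_n$ you would again need to multiply by the varying elements $x_n^{-1}$), and the final ``incompatible growth rates'' contradiction is left unspecified; moreover Lemma~\ref{1-2_lemma}(3)--(4) concerns neighbourhoods of a \emph{vertex} that is a limit point and its proof does not transfer to $0$. The paper's Case~2 runs quite differently: it first shows $0$ is not a limit point of $\{x_n^{-1}\}$ (otherwise the constant $r(x_n)=x_n^{-1}x_n$ would converge to $0$), uses finiteness of the annuli to define the length bound $l(p)$, then for suitable indices $n(p)$, $m(p)$ peels off the edges of $x_{m(p)}^{-1}$ one at a time to locate a threshold position $j(p)$, and finally pigeonholes over the finitely many edges of $E$ to obtain one fixed $e^{-1}$ whose right multiplication sends a sequence converging to $0$ to one bounded away from $0$. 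Since you yourself note that ``essentially all the work lies'' in these last steps, the proposal as it stands is an outline of the paper's setup rather than a proof.
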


\begin{proof}
Seeking a contradiction, suppose that $G(E)$ has a locally compact Hausdorff semigroup topology which is not discrete. Since $E$ is finite, $G(E)$ is countable, and hence there is a metric $d$ that induces this topology on $G(E)$, by Lemma~\ref{loc-comp-lemma}. Furthermore, by Theorem~\ref{discrete}, $G(E)\setminus \{0\}$ must be discrete, and thus $0$ is the unique limit 
point in $G(E)$. Since $G(E)$ is locally compact, there exists $N\in \mathbb{N}$ such that $\overline{B(0, 1/n)}$ is compact for all $n\geq N$. Thus $\overline{B(0, 1/n)}\setminus B(0, 1/(n+1))$ is compact for all $n\geq N$, since it is a closed subset of a compact set, and so it is finite, being a  subset of a discrete space. Therefore if $X$ is any infinite subset of $B(0, 1/N)\setminus\{0\}$ and the elements of $X$ are arbitrarily enumerated as $\{x_0y^{-1}_0, x_1y^{-1}_1, \ldots\}$ ($x_n, y_n \in \path (E)$), then the sequence $(x_ny^{-1}_n)_{n\in\N}$ converges to $0$. 

There are three cases to consider.\vspace{\baselineskip}

\noindent\emph{Case 1: No sequence of elements in $\, \path(E)$ or $\, \path(E)^{-1}$ converges to $\, 0$.} Suppose that  there exists $y\in \path(E)$ such that the set $$X_y=\{x\in \path(E):0<d(xy^{-1}, 0)<1/N\}$$ is infinite, say $X_y=\{x_0, x_1, \ldots\}$. But then $x_ny^{-1}\to 0$ as $n\to \infty$, and so $x_n=x_ny^{-1}y\to 0$ as $n\to \infty$, contradicting the assumption of this case. Therefore $X_y$ is finite for all $y\in \path(E)$. Similarly, if there is $x\in \path(E)$ such that the set $$Y_x=\{y\in \path(E):0<d(xy^{-1}, 0)<1/N\}$$
is infinite, say $Y_x=\{y_0, y_1, \ldots\}$, then $xy_n^{-1}\to 0$ as $n\to \infty$, and so $y_n^{-1}=x^{-1}xy_n^{-1}\to 0$ as $n\to \infty$, contradicting the 
assumption of this case. Therefore $Y_x$ is finite for all $y\in \path(E)$.

Since $B(0, 1/N)\setminus \{0\}$ is infinite,  $X_{y}\not=\emptyset$ for infinitely many $y \in \path(E)$, and we denote these by $y_0, y_1, \ldots$. For every $n\in \N$, let $x_n\in X_{y_n}$ be of maximal length. Then $\{x_n y_n^{-1}: n \in \N\}$ is an infinite subset of $B(0, 1/N)\setminus\{0\}$, and hence $x_ny_n^{-1}\to 0$ as $n\to \infty$. Since, for each $x_n$, $Y_{x_n}$ is finite, $\{x_n : n \in \N\}$ must be infinite. Therefore, by Lemma \ref{pumping} there exists an infinite subset $I$ of $\N$ and $\mu \in G(E)$ such that $|\mu x_n|>|x_n|$ for all $n\in I$. Hence $\mu x_n\not\in X_{y_n}$, by the choice of $x_n$, and so $d(\mu x_ny_n^{-1}, 0)\geq 1/N$ for all $n\in I$. It follows that the sequence $(\mu x_ny_n^{-1})_{n\in I}$ does not converge to $0$ whereas $(x_ny_n^{-1})_{n\in I}$ does, contradicting the continuity of the multiplication in $G(E)$.

\vspace{\baselineskip}

\noindent\emph{Case 2: There exists a  sequence $\, (x_n)_{n\in \N}$ of elements in $\, \path(E)$ that converges to $\, 0$.} Since $E$ is finite, we may assume that for all $n,m \in \N$ we have $r(x_n)=r(x_m)$, $d(x_n, 0)<1/N$, and $|x_n|<|x_{n+1}|$, upon passing to a subsequence if necessary. In particular, $|x_n| \geq n$ for all $n \in \N$. We start by showing that $0$ is not a limit point of $\{x_n^{-1}:n\in \N\}$. Supposing that $0$ is a limit point, there exists a subsequence $(x_{k(n)})_{n\in \N}$ of $(x_n)_{n\in \N}$ such that $x_{k(n)}^{-1}\to 0$  as $n\to \infty$. But then, by the continuity of multiplication, $r(x_{k(n)})=x_{k(n)}^{-1}x_{k(n)}\to 0$, a contradiction.

Since $\overline{B(0, 1/n)}\setminus B(0, 1/(n+1))$ is finite for all $n\geq N$, it follows that the set $$\{|x|+|y|:xy^{-1}\in G(E),\ 1/p\leq d(xy^{-1}, 0)<1/N\}$$ 
is finite for all $p\geq N$. We denote the maximum of this set by $l(p)$. 
Let $p>N$ be arbitrary, and let $n(p)\in\N$ be such that $n(p)>l(p)$ and $x_{n(p)}\in B(0, 1/p)$. Since $0$ is not a limit point of $\{x_n^{-1}:n\in \N\}$, it follows that $(x_{n(p)}x_m^{-1})_{m\in \N}$ does not converge to $0$ as $m\to \infty$. (Note that $x_{n(p)}x_m^{-1} \neq 0$ since $r(x_n)=r(x_m)$ for all $n,m \in \N$.) Since $|x_n| < |x_{n+1}|$ for all $n\in \N$, the set $\{x_{n(p)}x_m^{-1} : m \in \N\}$ is infinite, and so there are infinitely many $m$ satisfying $d(x_{n(p)}x_m^{-1}, 0) \geq 1/N$. Fix $m(p)\in \N$ such that  $d(x_{n(p)}x_{m(p)}^{-1}, 0)\geq 1/N$, and write $x_{m(p)}^{-1}=e_1(p)^{-1}\cdots e_{k(p)}(p)^{-1}$, where $e_1(p),\ldots, e_{k(p)}(p)\in E^{1}$, Also let $j(p)\leq k(p)$ 
(possibly $j(p)=1)$ be such that $$d(x_{n(p)}e_1(p)^{-1}\cdots e_{j(p)-1}(p)^{-1}, 0)<\frac{1}{N}\qquad\text{ and } \qquad d(x_{n(p)}e_1(p)^{-1}\cdots e_{j(p)}(p)^{-1}, 0)\geq \frac{1}{N}.$$ But  then $$|x_{n(p)}| + |e_{j(p)-1}(p) \cdots e_1(p)|\geq |x_{n(p)}|\geq n(p)>l(p),$$ and so $$d(x_{n(p)}e_1(p)^{-1}\cdots e_{j(p)-1}(p)^{-1}, 0)<\frac{1}{p}.$$ It follows that  $x_{n(p)}e_1(p)^{-1}\cdots e_{j(p)-1}(p)^{-1}\to 0$ as $p\to \infty$. But, since $E$ is finite, some edge $e$ occurs infinitely many times as $e_{j(p)}(p)$ in the above construction. Thus as $p \to \infty$, $x_{n(p)}e_1(p)^{-1}\cdots e_{j(p)-1}(p)^{-1}e^{-1}$ does not converge to $0$, since only finitely many of the terms of this sequence are in $B(0, 1/N)$, which contradicts the continuity of multiplication.

\vspace{\baselineskip}

\noindent\emph{Case 3: There exists a  sequence $\, (x_n^{-1})_{n\in \N}$ of elements in $\, \path(E)^{-1}$ that converges to $\, 0$.} This can be handled analogously to Case 2.
\end{proof}

After an earlier version of this paper was circulated, Bardyla/Gutik~\cite[Proposition 3.4]{BG} proved that the conclusion of Theorem~\ref{loc-comp} also holds for graphs $E$ consisting of one vertex and infinitely many loops (i.e., infinitely-generated polycyclic monoids). However, the question of whether the theorem can be generalized to all graphs $E$ remains open.

\section{Idempotents}

The goal of this section is to characterize the inverse subsemigroups $S$ of $G(E)$ such that $\mu\nu \neq 0$ for all $\mu, \nu \in S \setminus \{0\}$. This characterization will be useful in subsequent sections. 

Recall that an element $\mu$ of a semigroup $S$ is an \emph{idempotent} if $\mu\mu = \mu$.

\begin{lemma} \label{idempt_lemma}
The following hold for any graph $E$.
\begin{enumerate}
\item[$(1)$] Every nonzero idempotent of $G(E)$ is of the form $xx^{-1}$ for some $x \in \path(E)$. 
\item[$(2)$] If $\mu, \nu \in G(E)$ are two idempotents, then $\mu\nu \in \{0, \mu, \nu\}$.
\end{enumerate}
\end{lemma}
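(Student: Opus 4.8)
The plan is to exploit the normal form: every nonzero element of $G(E)$ is uniquely $xy^{-1}$ with $x,y\in\path(E)$ and $r(x)=r(y)$. For part (1), I would suppose $\mu = xy^{-1}\neq 0$ is idempotent and compute $\mu\mu = xy^{-1}xy^{-1}$. Since $\mu\mu = \mu \neq 0$, the product $y^{-1}x$ is nonzero, so by the CK1 relation there is a path $z$ with either $x = yz$ or $y = xz$. In the first case $\mu\mu = x y^{-1} y z y^{-1} = xzy^{-1} = xy^{-1}z^{-1}\cdots$ — more cleanly, $\mu\mu = xzy^{-1}$, and comparing with the unique normal form $\mu = xy^{-1}$ forces $xz = x$, hence $z = r(x)$ (length considerations, or uniqueness of normal forms), so $x = y$ and $\mu = xx^{-1}$. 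The second case $y = xz$ is symmetric and gives $y = x$ again. So $\mu = xx^{-1}$, as claimed.

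For part (2), by part (1) I may write $\mu = xx^{-1}$ and $\nu = yy^{-1}$ (the cases where one of them is $0$ being trivial, since $0\cdot\nu = 0 \in\{0,\mu,\nu\}$). Then $\mu\nu = xx^{-1}yy^{-1}$. If $x^{-1}y = 0$ then $\mu\nu = 0$ and we are done. Otherwise, again by CK1, there is a path $z$ with $x = yz$ or $y = xz$. If $y = xz$, then $xx^{-1}yy^{-1} = xx^{-1}xzy^{-1} = xzy^{-1} = yy^{-1} = \nu$ (using $xz = y$ twice). If instead $x = yz$, then symmetrically $xx^{-1}yy^{-1} = yz z^{-1} y^{-1} y y^{-1} = yzz^{-1}y^{-1} = xx^{-1} = \mu$. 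In every case $\mu\nu\in\{0,\mu,\nu\}$.

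The computations are routine once the normal form is in play; the only thing to be careful about is the bookkeeping in the CK1 step — namely, that $y^{-1}x \neq 0$ forces exactly one of $x = yz$, $y = xz$ (they can both hold only when $x = y$, which is harmless), and that cancelling $x^{-1}x = r(x)$ and $xr(x) = x$ is legitimate by the $(V)$ and $(E1)$ relations. I would spell out the first case of part (1) in full and then say the remaining cases "follow by the same argument" or "by symmetry". I do not expect any genuine obstacle here; the lemma is essentially a direct consequence of the unique normal form and the defining relations, and its role is simply to feed into the structure theory of inverse subsemigroups without zero divisors developed in the rest of the section.
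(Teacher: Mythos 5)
Your proof is correct. Part (2) is essentially identical to the paper's argument: write the idempotents as $xx^{-1}$ and $yy^{-1}$, use the CK1 relation to split into the cases $x=yz$ or $y=xz$, and compute that the product is $\mu$ or $\nu$ accordingly. Part (1) takes a slightly different route: you compute $\mu\mu$ directly from the normal form $\mu=xy^{-1}$, use CK1 to reduce $y^{-1}x$ to $z$ or $z^{-1}$, and then invoke uniqueness of the normal form to force $z$ to be a vertex and hence $x=y$. The paper instead uses the standard inverse-semigroup fact that every idempotent is its own inverse (from $\mu\mu\mu=\mu$ and uniqueness of inverses), so that $xy^{-1}=(xy^{-1})^{-1}=yx^{-1}$, and uniqueness of the normal form gives $x=y$ in one step, with no case analysis. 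Your version is more computational but entirely elementary, relying only on the defining relations and the normal form; the paper's is shorter and makes visible the general principle being used. Your bookkeeping in the CK1 step (that $xzy^{-1}$, respectively $x(yz)^{-1}$, is again a valid normal form, so uniqueness applies) is the only point needing care, and you handle it correctly.
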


\begin{proof}
(1) It is a standard fact that if $S$ is an inverse semigroup and $\mu \in S$ is an idempotent, then $\mu = \mu^{-1}$. (For, $\mu\mu\mu = \mu$ implies that $\mu = \mu^{-1}$, by the uniqueness of inverses.) Applying this to the inverse semigroup $G(E)$, suppose that $xy^{-1} \in G(E)$ is an idempotent ($x, y \in \path(E)$). Then $xy^{-1} = (xy^{-1})^{-1} =yx^{-1}$, from which the desired statement follows.

\vspace{\baselineskip}

(2) We may assume that $\mu \neq 0 \neq \nu$, since otherwise the claim is clear. By (1), we can write $\mu = xx^{-1}$ and $\nu = yy^{-1}$ for some $x, y \in \path(E)$. If $\mu\nu \neq 0$, then either $x=yz$ or $y=xz$ for some $z \in \path(E)$. In the first case, $$\mu\nu = xx^{-1}yy^{-1} = yzz^{-1}y^{-1}yy^{-1} = yzz^{-1}y^{-1} = xx^{-1} = \mu.$$ In the second case, $$\mu\nu = xx^{-1}yy^{-1} = xx^{-1}xzz^{-1}x^{-1} = xzz^{-1}x^{-1} = yy^{-1} = \nu.$$ Thus, $\mu\nu \in \{0, \mu, \nu\}$ for all idempotents $\mu$ and $\nu$.
\end{proof}

\begin{lemma} \label{no_zero_div_lemma}
Let $E$ be a graph, and suppose that $S$ is an inverse subsemigroup of $G(E)$ such that $\mu\nu \neq 0$ for all $\mu, \nu \in S \setminus \{0\}$. Then the following hold.
\begin{enumerate}
\item[$(1)$] Let $x \in \path(E)$, and set $S' = S \cap xG(E)x^{-1}$. Then $S'$ and $x^{-1}S'x$ are inverse subsemigroups of $G(E)$ satisfying the above hypothesis on $S$, and $f(\mu)=x^{-1}\mu x$ defines an isomorphism $f : S' \to x^{-1}S'x$.
\item[$(2)$] Suppose that $S \cap (\path(E)\setminus E^0)\neq \emptyset$, and let $x \in S \cap (\path(E)\setminus E^0)$ be such that $\, |x|$ is minimal. Then all nonzero elements of $S$ are of the form $x^nyy^{-1}x^{-m}$, where $n, m \in \N$, and $y \in \path (E)$ satisfies $x=yp$ for some $p \in \path(E)$.
\end{enumerate}
\end{lemma}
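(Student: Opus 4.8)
The plan is to handle (1) and (2) separately. Part (1) amounts to the observation that conjugation by $x$ is an isomorphism onto its image, and I would organise it as follows. Writing $v = r(x)$, note first that $xG(E)x^{-1} = \{x\alpha x^{-1} : \alpha \in vG(E)v\}$ (since $xv = x$ and $vx^{-1} = x^{-1}$), where $vG(E)v = \{v\alpha v : \alpha \in G(E)\}$ is an inverse subsemigroup of $G(E)$ with identity $v$. Using $x^{-1}x = v$, one checks directly that $\phi \colon vG(E)v \to xG(E)x^{-1}$, $\alpha \mapsto x\alpha x^{-1}$, is a homomorphism, that it is onto by construction, and that $\gamma \mapsto x^{-1}\gamma x$ is a two-sided inverse for it; hence $\phi$ is an isomorphism, $xG(E)x^{-1}$ is an inverse subsemigroup of $G(E)$, and both $\phi$ and $\phi^{-1}$ fix $0$. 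Then $S' = S \cap xG(E)x^{-1}$ is an intersection of inverse subsemigroups of $G(E)$, hence an inverse subsemigroup (closure under inversion uses uniqueness of inverses in $G(E)$), and it inherits the no-zero-divisor hypothesis because $S' \subseteq S$. Finally $x^{-1}S'x = \phi^{-1}(S')$ is an isomorphic image of $S'$, so it too is an inverse subsemigroup of $G(E)$ satisfying the hypothesis (as $\phi^{-1}$ is injective and fixes $0$), and $f = \phi^{-1}|_{S'}$ is the claimed isomorphism.

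For (2) I would first set up the relevant normal form. Since $x \in S \setminus \{0\}$ and $S$ has no zero divisors, $x^2 \neq 0$, so $s(x) = r(x) =: v$; consequently every power $x^n$ lies in $S \setminus \{0\}$, and so does $v = x^{-1}x$. Fix $\mu = ab^{-1} \in S \setminus \{0\}$ with $a, b \in \path(E)$ and $r(a) = r(b)$. For each $n$ the elements $x^{-n}\mu$ and $\mu x^n$ are products of nonzero elements of $S$, hence nonzero, which forces $x^{-n}a \neq 0$ and $b^{-1}x^n \neq 0$; taking $n$ with $n|x| > \max\{|a|, |b|\}$, this means $a$ and $b$ are both prefixes of $x^n$. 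Dividing $|a|$ by $|x|$, it follows that $a = x^{k_a}y_a$ for a unique $k_a \in \N$ and a unique prefix $y_a$ of $x$ with $|y_a| < |x|$, and likewise $b = x^{k_b}y_b$; in particular $x = y_ap_a = y_bp_b$ for suitable paths $p_a, p_b$.

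The crux is then to show $y_a = y_b$. Replacing $\mu$ by $\mu^{-1}$ if necessary (this swaps $(k_a, y_a)$ with $(k_b, y_b)$ and leaves the target form unchanged), I may assume $|y_a| \le |y_b|$; suppose, toward a contradiction, that $y_a \neq y_b$. Then $y_b = y_az$ for some $z \in \path(E)$ with $1 \le |z| \le |x| - 1$, and since $r(a) = r(b)$ gives $r(y_a) = r(y_b)$, while $s(z) = r(y_a)$ and $r(z) = r(y_b) = r(y_a)$, the path $z$ is a closed path with $1 \le |z| < |x|$. A direct computation (using $x^{-k_a}x^{k_a} = v = x^{-k_b}x^{k_b}$, $vy_a = y_a$, $y_b^{-1}v = y_b^{-1}$) shows $w := x^{-k_a}\mu x^{k_b} = y_ay_b^{-1}$, and $w \in S \setminus \{0\}$ as a product of nonzero elements of $S$. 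Writing $x = y_br$ (so $|r| \geq 1$ and $s(r) = r(y_b) = r(y_a)$), we get $wx = y_ay_b^{-1}y_br = y_ar$, a closed path at $v$ of length $|y_a| + |x| - |y_b| = |x| - |z|$, hence $1 \le |wx| < |x|$. But $wx \in S$ is again a product of nonzero elements of $S$, so $wx \in S \cap (\path(E) \setminus E^0)$ has length strictly less than $|x|$, contradicting the minimality of $|x|$. Therefore $y_a = y_b =: y$, and $\mu = x^{k_a}yy^{-1}x^{-k_b}$ with $x = yp$, as claimed.

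The main obstacle is this last argument: apart from it, everything is routine manipulation with the relations (V), (E1), (E2), (CK1) together with the fact that no product of nonzero elements of $S$ vanishes. What takes some thought is recognising that a mismatch $y_a \neq y_b$ must already occur inside a genuine element $y_ay_b^{-1}$ of $S$ (obtained by conjugating away the $x$-powers), and that multiplying this element by $x$ on the right produces an honest path which is a closed path of length $< |x|$ — precisely what minimality of $x$ forbids.
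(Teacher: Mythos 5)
Your proof is correct and follows essentially the same route as the paper's: part (1) is the same direct verification (just organized through the corner $vG(E)v$ and the conjugation isomorphism), and in part (2) both arguments strip powers of $x$ off $\mu$ to reduce to an element $y_ay_b^{-1}\in S$ with $y_a,y_b$ proper prefixes of $x$, then use minimality of $|x|$ to force $y_a=y_b$. The only cosmetic difference is the final step, where the paper conjugates by $x$ and concludes that the resulting path $p^{-1}q\in S$ must be a vertex, while you multiply $y_ay_b^{-1}$ on the right by $x$ to produce a nontrivial path in $S$ of length less than $|x|$, contradicting minimality directly.
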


\begin{proof}
(1) We note that $xG(E)x^{-1}$ is an inverse semigroup, since for all $\mu, \nu \in G(E)$, $x\mu x^{-1}x\nu x^{-1} = x\mu\nu x^{-1}$ and $(x\mu x^{-1})^{-1} = x \mu^{-1}x^{-1}$. As an intersection of inverse semigroups, $S'$ must be one as well. Since $S'$ is a subsemigroup of $S$, clearly it has no zero-divisors.

Suppose that $yz^{-1}, uv^{-1} \in x^{-1}S'x \setminus \{0\}$, for some $u,v,y,z \in \path (E)$. Then $xyz^{-1}x^{-1}, \linebreak[0] xuv^{-1}x^{-1} \in S' \setminus \{0\}$, and since $$xyz^{-1}uv^{-1}x^{-1} = xyz^{-1}x^{-1}xuv^{-1}x^{-1} \in S',$$ it follows that $yz^{-1}uv^{-1} \in x^{-1}S'x$. Also, since $xzy^{-1}x^{-1} \in S'$, we have $zy^{-1} \in x^{-1}S'x$, showing that $x^{-1}S'x$ is an inverse semigroup.

To show that $S'$ and $x^{-1}S'x$ are isomorphic, let $f : S' \rightarrow x^{-1}S'x $ be as in the statement. This map is clearly a bijection. Letting $\mu, \nu \in S'$ be any elements, we can write $\mu = xyz^{-1}x^{-1}$ and $\nu = xuv^{-1}x^{-1}$ for some $u,v,y,z \in \path (E) \cup \{0\}$. Then $$f(\mu\nu) = f(xyz^{-1}uv^{-1}x^{-1}) = yz^{-1}uv^{-1} = f(\mu)f(\nu),$$ and hence $f$ is an isomorphism.

\vspace{\baselineskip}

(2) Let $\mu \in S\setminus \{0\}$ be any element, and write $\mu = uv^{-1}$ ($u,v \in \path (E)$). Let $n,m \in \N$ be maximal such that $x^{-n}u, (v^{-1}x^m)^{-1} \in \path (E)$. Since by hypothesis, $x^{-1}u, v^{-1}x \neq 0$, it follows that $u = x^ny$ and $v = x^mz$ for some $y,z \in \path (E)$, where $|y|,|z| < |x|$. Since $x^{-n}\mu x^m = yz^{-1} \in S$, by our choice of $x$ and the fact that $S$ is closed under inverses, either both $y, z \in E^0$, or both $y, z \notin E^0$. In the first case, $\mu = x^nx^{-m}$, giving $\mu$ the desired form. Let us therefore assume that $y, z \notin E^0$. Then $x^{-1}yz^{-1}x \neq 0$ implies that $x = yp=zq$ for some $p,q \in \path (E)\setminus E^0$, since $|y|,|z| < |x|$. Using the fact that $S$ is an inverse semigroup, and replacing $yz^{-1}$ by $zy^{-1}$, if necessary, we may assume that $|p| \leq |q|$. Now, $$0 \neq x^{-1}yz^{-1}x = p^{-1}y^{-1}yz^{-1}zq = p^{-1}q$$ implies that $q=pt$ for some $t \in \path (E)$, where $|t| < |q| < |x|$. But, then $p^{-1}q = p^{-1}pt = t \in S$ implies that $t \in E^0$, by the minimality of $|x|$. Hence $p=q$, and therefore also $y=z$, from which we obtain $\mu = uv^{-1} =  x^nyz^{-1}x^{-m} = x^nyy^{-1}x^{-m}$, as required.
\end{proof}

\begin{theorem} \label{no_zero_div_description}
Let $E$ be a graph, and suppose that $S$ is an inverse subsemigroup of $G(E)$ such that $\mu\nu \neq 0$ for all $\mu, \nu \in S \setminus \{0\}$. Then there exists an element $\mu \in S$ such that $S$ is generated as a semigroup by $\mu$ and the idempotents of $S$.
\end{theorem}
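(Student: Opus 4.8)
The plan is to treat the semilattice case trivially and reduce the general case, via a ``corner'' of $G(E)$, to the normal form of Lemma~\ref{no_zero_div_lemma}(2). If every element of $S$ is idempotent, then $S$ is its own set of idempotents and any $\mu\in S$ works, so assume $S$ has a non-idempotent. First I would note that $E(S)$ is a chain: two idempotents $ww^{-1},uu^{-1}$ of $S$ are nonzero, so by Lemma~\ref{idempt_lemma}(2) their product equals one of them, i.e.\ they are comparable (with $ww^{-1}\ge uu^{-1}$ exactly when $w$ is a prefix of $u$). Second, I would observe that every non-idempotent of $S$, written as $pq^{-1}$ with $|p|\ge|q|$ after inverting if necessary, has the form $q\zeta q^{-1}$ with $\zeta$ a nonempty closed path at $r(q)$: indeed $(pq^{-1})^2\ne0$ forces $p=q\zeta$, and then $q\zeta\zeta q^{-1}\ne0$ forces $\zeta$ to be closed at $r(q)$, while $\zeta$ is nonempty since $pq^{-1}$ is not idempotent. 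I would then fix a non-idempotent $\mu=yzy^{-1}$ of this shape with $|y|$ as small as possible.

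The heart of the proof is the claim that $S=S'\cup E(S)$, where $S':=S\cap yG(E)y^{-1}$. Let $\gamma\in S\setminus\{0\}$; replacing $\gamma$ by $\gamma^{-1}$ if needed (harmless, as $yG(E)y^{-1}$ and $E(S)$ are inversion-closed), write $\gamma=pq^{-1}$ with $|p|\ge|q|$. Since $pp^{-1}=\gamma\gamma^{-1}$ and $qq^{-1}=\gamma^{-1}\gamma$ lie in the chain $E(S)$, $q$ is a prefix of $p$, so $\gamma=q\sigma q^{-1}$ with $\sigma$ a closed path at $r(q)$. If $\sigma$ is nonempty, then $\gamma$ is a non-idempotent in the normal form above, so $|q|\ge|y|$ by minimality of $|y|$; since $qq^{-1}$ and $yy^{-1}=\mu^{-1}\mu$ are comparable in $E(S)$ and $|q|\ge|y|$, we must have $yy^{-1}\ge qq^{-1}$, i.e.\ $y$ is a prefix of $q$, hence of $p=q\sigma$, so $\gamma\in yG(E)y^{-1}$. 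If $\sigma$ is empty, then $\gamma=qq^{-1}\in E(S)$. This proves the claim (and shows in particular that every non-idempotent of $S$ lies in $S'$).

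Now I would invoke Lemma~\ref{no_zero_div_lemma}(1): $S'$ and $T:=y^{-1}S'y$ are inverse subsemigroups of $G(E)$ satisfying the hypothesis on $S$, and conjugation by $y$ is an isomorphism $S'\to T$; moreover $\mu\in S'$ and $y^{-1}\mu y=z$, so $T$ contains the nonempty path $z$. Let $x\in T$ be a nonempty path of minimal length---necessarily a closed path, since $x\cdot x\ne0$. By Lemma~\ref{no_zero_div_lemma}(2), every nonzero element of $T$ is $x^nww^{-1}x^{-m}$ with $n,m\in\N$ and $w$ a prefix of $x$, so every element of $S'$ has the form $\gamma=yx^nww^{-1}x^{-m}y^{-1}$. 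Setting $\mu':=yxy^{-1}$ (so $(\mu')^k=yx^ky^{-1}$), a short computation gives $\gamma\gamma^{-1}=yx^nww^{-1}x^{-n}y^{-1}$ and $\gamma^{-1}\gamma=yx^mww^{-1}x^{-m}y^{-1}$, both idempotents lying in $S'\subseteq S$; and $\gamma=(\mu')^{n-m}(\gamma^{-1}\gamma)$ when $n\ge m$, while $\gamma=(\gamma\gamma^{-1})(\mu'^{-1})^{m-n}$ when $n<m$ (apply the previous case to $\gamma^{-1}$). Hence every element of $S'$, and therefore---using $S=S'\cup E(S)$---every element of $S$, is a product of copies of $\mu'$, copies of $\mu'^{-1}$, and idempotents of $S$. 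Since the idempotents of $S$ are self-inverse, this is exactly the statement that $S$ is generated by $\mu:=\mu'$ together with the idempotents of $S$.

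I expect the main obstacle to be the reduction $S=S'\cup E(S)$ of the second paragraph: showing that, relative to a minimal non-idempotent $\mu=yzy^{-1}$, the only elements of $S$ not contained in the corner $yG(E)y^{-1}$ are ``large'' idempotents---which are harmless, being idempotents to begin with. This is precisely where one must combine the no-zero-divisor hypothesis (which forces $E(S)$ to be a chain and forces each element into the form $q\sigma q^{-1}$) with the minimality of $|y|$ to locate the common ``head'' $y$. Everything afterwards is routine manipulation with the normal form of Lemma~\ref{no_zero_div_lemma}(2) and the corner isomorphism of Lemma~\ref{no_zero_div_lemma}(1).
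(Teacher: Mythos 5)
Your argument is correct and follows essentially the same route as the paper: choose a non-idempotent $yzy^{-1}\in S$ with $|y|$ minimal, show (using minimality and the no-zero-divisor hypothesis) that every non-idempotent of $S$ lies in the corner $S\cap yG(E)y^{-1}$, and then combine the conjugation isomorphism of Lemma~\ref{no_zero_div_lemma}(1) with the normal form of Lemma~\ref{no_zero_div_lemma}(2). The one caveat is that your factorizations use $\mu'^{-1}$ as well as $\mu'$, so ``generated as a semigroup by $\mu$ and the idempotents'' must be read as allowing $\mu^{-1}$; but the paper's own proof relies on the same reading, since the normal form $x^{n}ww^{-1}x^{-m}$ likewise requires negative powers of the generator.
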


\begin{proof}
Let us assume that $S$ does not consist entirely of idempotents, since otherwise there is nothing to prove. Also, we may assume that $S \cap (\path(E)\setminus E^0)= \emptyset$, since otherwise the desired conclusion follows from Lemma~\ref{no_zero_div_lemma}(2).

Let $\mu = yx^{-1} \in S$ ($x, y \in \path (E)$) be a non-idempotent such that $|x|$ is minimal and $|y|$ is minimal for the chosen $x$. Since $S$ is an inverse semigroup, $xy^{-1} \in S$, and hence $|x| \leq |y|$. Therefore $0 \neq yx^{-1}yx^{-1}$ implies that $y = xp$ for some $p \in \path (E) \setminus E^0$. Hence $\mu = xpx^{-1}$, where $x \notin E^0$, since $S \cap (\path(E)\setminus E^0)= \emptyset$.

Now, let $vw^{-1} \in S$ be any non-idempotent ($v, w \in \path (E)$). Again, by assumption, $v, w \notin E^0$, and since $S$ is an inverse semigroup, our choice of $x$ implies that $|x| \leq |v|,|w|$. Thus, from $0 \neq xpx^{-1}vw^{-1}xp^{-1}x^{-1}$ we see that $vw^{-1} \in S': = S \cap xG(E)x^{-1}$. By Lemma~\ref{no_zero_div_lemma}(1), $x^{-1}S'x$ is an inverse semigroup satisfying the hypothesis on $S$. Since $p \in x^{-1}S'x \cap (\path(E)\setminus E^0)$ and $|p|$ is minimal (by our choice of $y$), Lemma~\ref{no_zero_div_lemma}(2) implies that $x^{-1}S'x$ is generated by $p$ and some set of idempotents. By Lemma~\ref{no_zero_div_lemma}(1), $x^{-1}S'x$ is isomorphic to $S'$, via an isomorphism that sends $p$ to $\mu$, and therefore $S'$ is generated by $\mu$ and some set of idempotents. Since all the non-idempotents of $S$ are elements of $S'$, the desired conclusion follows.
\end{proof}

It is not hard to see that if the $\mu = xpx^{-1}$ above is not an idempotent, then $S$ is generated by $\mu$ and idempotents of the form $uu^{-1}$ ($u\in \path (E)$), where $uq=xp$ for some $q\in \path (E)$.

\section{Closures}

Thus far we have considered all possible Hausdorff topologies on $G(E)$ which made multiplication continuous. Now we restrict our attention to topologies with respect to which inversion is also continuous. More specifically, using the theory developed above we shall describe the complement of $G(E)$ in the closure $\overline{G(E)}$ of $G(E)$ in any topological inverse semigroup that contains it. We begin by recalling a couple of basic facts about topological inverse semigroups. The second, which will be crucial for us, relies heavily on the assumption that inversion (and not just multiplication) is continuous.

\begin{proposition}[Proposition II.2 in~\cite{ES}] \label{inverse_subsemi}
Let $S$ be a topological inverse semigroup and $T$ an inverse subsemigroup of $S$. Then $T$ and $\overline{T}$ are topological inverse subsemigroups of $S$.
\end{proposition}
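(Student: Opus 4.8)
The statement has two parts: that an inverse subsemigroup $T$ of a topological inverse semigroup $S$ is itself a topological inverse subsemigroup, and that its closure $\overline{T}$ is as well. The first part is essentially immediate: $T$ inherits the subspace topology from $S$, and the restrictions to $T \times T$ and to $T$ of the (jointly continuous) multiplication $* : S \times S \to S$ and the (continuous) inversion $\cdot^{-1} : S \to S$ remain continuous, so the only thing to check is that $T$ is closed under both operations — which is exactly the hypothesis that $T$ is an inverse subsemigroup. (One should note that the inverse of an element of $T$ computed in $T$ agrees with the inverse computed in $S$, which follows from uniqueness of inverses in an inverse semigroup.)

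The substance is in showing $\overline{T}$ is an inverse subsemigroup of $S$, i.e. that $\overline{T}$ is closed under multiplication and inversion; once that is known, the topological-subsemigroup claim for $\overline{T}$ follows by the same subspace-topology argument as above. For closure under inversion: the map $\cdot^{-1} : S \to S$ is continuous, and since $S$ is a topological inverse semigroup the inversion is in fact a homeomorphism (its own inverse as a map is itself, as $(x^{-1})^{-1} = x$). A continuous map sends the closure of a set into the closure of its image, so $(\overline{T})^{-1} \subseteq \overline{T^{-1}} = \overline{T}$, using $T^{-1} = T$. For closure under multiplication: the plan is the standard two-step density argument. Fix $a \in \overline{T}$; the map $x \mapsto ax$ need not a priori send $T$ into $\overline{T}$, so instead fix $b \in T$ first and use continuity of right translation (a consequence of joint continuity of $*$) to get $\overline{T}\, b \subseteq \overline{T b} \subseteq \overline{T}$; then, with $a \in \overline{T}$ now fixed, use continuity of left translation by $a$ to get $a\,\overline{T} \subseteq \overline{a\overline{T}} \subseteq \overline{\overline{T}} = \overline{T}$, where the middle inclusion uses the previous step (for $b\in T$, $ab \in \overline{T}$, so $a$ applied to the dense subset $T$ of $\overline{T}$ already lands in $\overline{T}$). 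Hence $\overline{T}\cdot\overline{T} \subseteq \overline{T}$.

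The only mild subtlety — and the step I would be most careful about — is making sure that $\overline{T}$ really is an \emph{inverse} semigroup and not merely a subsemigroup closed under some involution: one must observe that for $x \in \overline{T}$, the element $x^{-1}$ (its inverse in $S$) lies in $\overline{T}$ by the paragraph above, and that $x x^{-1} x = x$ and $x^{-1} x x^{-1} = x^{-1}$ hold in $S$ hence in the subsemigroup $\overline{T}$, with uniqueness inherited from $S$; so $\overline{T}$ is an inverse subsemigroup of $S$ in the precise sense of Section~\ref{definitions}. Everything else is a routine invocation of the facts that continuous maps respect closures and that in a topological semigroup the one-sided translations are continuous; no new ideas beyond the density trick are needed.
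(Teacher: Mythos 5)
Your argument is correct, but note that the paper itself offers no proof of this statement: it is imported verbatim as Proposition II.2 of Eberhart--Selden \cite{ES}, so there is no in-paper proof to compare against. What you give is the standard argument, and all the essential points are in place: the subspace-topology observation for $T$, closure of $\overline{T}$ under inversion via continuity of the involution ($(\overline{T})^{-1}\subseteq\overline{T^{-1}}=\overline{T}$), closure under multiplication, and the check that unique inverses are inherited from $S$ so that $\overline{T}$ is genuinely an \emph{inverse} subsemigroup. Two small remarks. First, since multiplication on $S$ is \emph{jointly} continuous, the two-step translation argument is more machinery than needed: one can argue in one shot that $\overline{T}\cdot\overline{T}=m\bigl(\overline{T}\times\overline{T}\bigr)=m\bigl(\overline{T\times T}\bigr)\subseteq\overline{m(T\times T)}\subseteq\overline{T}$, where $m$ is the multiplication map and one uses $\overline{T}\times\overline{T}=\overline{T\times T}$ in the product topology; your translation argument is what one would fall back on if only separate continuity were assumed, and it is still valid here. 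Second, in your displayed chain the middle term should be $\overline{aT}$ rather than $\overline{a\overline{T}}$: continuity of $\lambda_a$ gives $\lambda_a(\overline{T})\subseteq\overline{\lambda_a(T)}=\overline{aT}$, and then $\overline{aT}\subseteq\overline{T}$ by your first step; as written, the inclusion $\overline{a\overline{T}}\subseteq\overline{\overline{T}}$ presupposes what is being proved, though your parenthetical explanation makes clear you intended the correct version, so this is a notational slip rather than a gap.
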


\begin{proposition}[Proposition II.3 in~\cite{ES}] \label{dense_idempt}
Let $S$ be a topological inverse semigroup and $T$ a dense inverse subsemigroup of $S$. Also, let $I$ denote the set of all idempotents of $S$. Then $I = \overline{I\cap T}$.
\end{proposition}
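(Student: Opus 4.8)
The plan is to prove the two inclusions $\overline{I \cap T} \subseteq I$ and $I \subseteq \overline{I \cap T}$ separately, the first using that the idempotent set of a topological semigroup is closed, the second using density together with the continuity of the two structure maps.

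For the inclusion $\overline{I \cap T} \subseteq I$, I would invoke the standard fact that in a Hausdorff topological semigroup the set $I = \{x \in S : x^2 = x\}$ is closed: it is the preimage of the diagonal $\Delta \subseteq S \times S$ under the continuous map $x \mapsto (x^2, x)$, and $\Delta$ is closed since $S$ is Hausdorff. As $I \cap T \subseteq I$ and $I$ is closed, $\overline{I \cap T} \subseteq \overline{I} = I$. (One can also see this directly: if a net $(e_\alpha)$ in $I \cap T$ converges to some $x \in S$, then $x = \lim e_\alpha = \lim e_\alpha^2 = x^2$ by continuity of multiplication and uniqueness of limits, so $x \in I$.)

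For the reverse inclusion $I \subseteq \overline{I \cap T}$, the idea is to approximate an arbitrary idempotent of $S$ by idempotents coming from $T$. Fix $e \in I$. Since $T$ is dense, choose a net $(t_\alpha)$ in $T$ with $t_\alpha \to e$. Because inversion is continuous and $e^{-1} = e$ (idempotents are self-inverse in an inverse semigroup), $t_\alpha^{-1} \to e^{-1} = e$; then by continuity of multiplication $t_\alpha t_\alpha^{-1} \to e\,e^{-1} = e$. Now $t_\alpha t_\alpha^{-1}$ lies in $T$ — as $T$ is an inverse subsemigroup, it is closed under the inversion of $S$ (the inverse of an element of $T$ computed in $T$ agrees with its inverse in $S$, by uniqueness of inverses) — and $t_\alpha t_\alpha^{-1}$ is an idempotent in any inverse semigroup, so $t_\alpha t_\alpha^{-1} \in I \cap T$. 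Hence $e$ is a limit of a net in $I \cap T$, i.e. $e \in \overline{I \cap T}$. Combining the two inclusions yields $I = \overline{I \cap T}$.

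There is no real obstacle: the argument is short once the tools are assembled. The only points needing care are (i) the closedness of $I$, which relies on $S$ being Hausdorff (this is the ambient standing assumption inherited from \cite{ES}), or equivalently the uniqueness-of-limits step in the direct argument; (ii) checking that the inversion on $T$ is the restriction of the inversion on $S$, so that $t_\alpha^{-1} \in T$; and (iii) the elementary observation that $xx^{-1}$ is always idempotent. One could replace nets by filters, or by sequences under first countability, but nets keep the proof valid in full generality.
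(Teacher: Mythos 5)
The paper does not prove this proposition at all: it is quoted verbatim as Proposition II.3 of Eberhart--Selden~\cite{ES}, so there is no internal proof to compare against. Your argument is correct and complete: the inclusion $\overline{I\cap T}\subseteq I$ follows from closedness of the idempotent set (or uniqueness of limits), and the inclusion $I\subseteq\overline{I\cap T}$ follows by taking a net $t_\alpha\to e$ in $T$ and passing to $t_\alpha t_\alpha^{-1}\to ee^{-1}=e$, using continuity of both multiplication and inversion and the fact that an inverse subsemigroup is closed under the ambient inversion. Your caveat about the Hausdorff hypothesis is apt: it is not stated explicitly in the proposition but is the standing convention in~\cite{ES} and holds in every application the paper makes of the result, and without it the first inclusion can fail.
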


The next result is a generalization of \cite[Proposition III.1]{ES}, which says that, letting $\overline{P_1}$ denote the closure of the bicyclic monoid $P_1$ in a Hausdorff topological semigroup, $\overline{P_1} \setminus P_1$ is a group.

\begin{proposition} \label{idempt_in_T}
Let $E$ be a graph, and suppose that $G(E)$ is a subsemigroup of a Hausdorff topological inverse semigroup. Set $T = \overline{G(E)}\setminus G(E)$, and let $I$ denote the subset of all idempotents of $\overline{G(E)}$. Then the following hold.
\begin{enumerate}
\item[$(1)$] For all $\rho \in T$ there are idempotents $\mu, \nu \in I \cap T$ such that $\rho \in \mu T\nu$.
\item[$(2)$] For all $\mu, \nu \in I \cap T$, if $\mu \neq \nu$, then $\mu\nu = 0$.
\item[$(3)$] For all $\mu \in I \cap T$ the set $\mu T \mu \setminus \{0\}$ is a group with identity $\mu$.
\item[$(4)$] For all $\mu \in I\cap T$, $\mu G(E) \mu$ is dense in $\mu \overline{G(E)} \mu = \mu T\mu \cup \{0\}$.
\end{enumerate}
\end{proposition}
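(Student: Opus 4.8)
The plan is to write $S = \overline{G(E)}$, which by Proposition~\ref{inverse_subsemi} is a Hausdorff topological inverse semigroup in which $G(E)$ is dense, so that $T = S\setminus G(E)$ and $I$ is the set of idempotents of $S$. I would then use repeatedly: that $\{g\}$ is open in $S$ for every $g\in G(E)\setminus\{0\}$ (Theorems~\ref{discrete} and~\ref{complement_ideal}(1)); that $T\cup\{0\}$ is an ideal of $S$ and $T$ is closed under inversion (Theorem~\ref{complement_ideal}(2) and continuity of inversion); that every neighbourhood of a point of $T$ meets $G(E)\setminus\{0\}$ (density together with Hausdorffness, since such a point lies outside $G(E)$); and that every element of $I$ is a limit of a net drawn from $\{xx^{-1}:x\in\path(E)\}\cup\{0\}$ (Proposition~\ref{dense_idempt} and Lemma~\ref{idempt_lemma}(1)).

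For (1), I would take $\mu=\rho\rho^{-1}$ and $\nu=\rho^{-1}\rho$, which are idempotents of $S$ with $\rho=\mu\rho\nu$, so it suffices to show $\mu,\nu\in T$, and by the symmetry between $\rho$ and $\rho^{-1}$ it is enough to prove $\rho\rho^{-1}\notin G(E)$. Suppose instead $\rho\rho^{-1}=xx^{-1}$ for some $x\in\path(E)$ (it cannot be $0$, else $\rho=0$). Fix a net $(\tau_i)_{i\in J}$ in $G(E)\setminus\{0\}$ with $\tau_i\to\rho$. By continuity of $(s,t)\mapsto st^{-1}$ the net $(\tau_i\tau_j^{-1})_{(i,j)\in J\times J}$ converges to $\rho\rho^{-1}=xx^{-1}$, and since $\{xx^{-1}\}$ is open there is $k\in J$ with $\tau_i\tau_j^{-1}=xx^{-1}$ whenever $i,j\geq k$. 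Writing $\tau_i=p_iq_i^{-1}$, the diagonal case $i=j$ gives $p_ip_i^{-1}=xx^{-1}$, so $p_i=x$ and $\tau_i=xq_i^{-1}$ with $r(q_i)=r(x)$; then for $i\neq j$ the relation $xq_i^{-1}q_jx^{-1}=xx^{-1}$ forces $q_i^{-1}q_j=r(x)$ and hence $q_i=q_j$. Thus $(\tau_i)$ is eventually constant, so $\rho\in G(E)$, a contradiction. I expect this step to be the main obstacle of the whole proposition.

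For (2), I would argue by contradiction: suppose $\mu\neq\nu$ in $I\cap T$ but $\mu\nu\neq 0$, and pick nets $a_i=y_iy_i^{-1}\to\mu$ and $b_j=z_jz_j^{-1}\to\nu$ with $y_i,z_j\in\path(E)$. Along the product net $a_ib_j\to\mu\nu\neq 0$, so $a_ib_j\neq 0$ eventually, whence $a_ib_j\in\{a_i,b_j\}$ by Lemma~\ref{idempt_lemma}(2); for large $i,j$ this says one of $y_i,z_j$ is a prefix of the other. Fixing a large $i$: if $z_j$ were a proper prefix of $y_i$ for cofinally many $j$, then $(b_j)$ would be cofinally inside the finite set $\{pp^{-1}:p\text{ a prefix of }y_i\}$, and passing to a constant subnet would force $\nu\in G(E)$; hence $y_i$ is a prefix of $z_j$ for all large $j$, so $b_j\leq a_i$ eventually and $a_i\nu=\lim_j a_ib_j=\lim_j b_j=\nu$, i.e.\ $\nu\leq a_i$. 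Letting $i\to\infty$ gives $\mu\nu=\nu$, and the symmetric computation gives $\mu\nu=\mu$, so $\mu=\nu$, a contradiction.

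Parts (3) and (4) should then be short. For (3): $\mu=\mu\mu\mu\in\mu T\mu$ and $\mu$ acts as a two-sided identity on $\mu T\mu$; given $\mu\rho\mu\in\mu T\mu\setminus\{0\}$, the idempotent $(\mu\rho\mu)(\mu\rho\mu)^{-1}=\mu\rho\mu\rho^{-1}\mu$ is a nonzero element of $I\cap T$ lying below $\mu$, hence equals $\mu$ by (2), and similarly $(\mu\rho\mu)^{-1}(\mu\rho\mu)=\mu$; so $(\mu\rho\mu)^{-1}=\mu\rho^{-1}\mu$ is a nonzero two-sided inverse of $\mu\rho\mu$ inside $\mu T\mu$, and multiplying by such an inverse shows a product of two nonzero elements of $\mu T\mu$ is nonzero. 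Thus $\mu T\mu\setminus\{0\}$ is a group with identity $\mu$. For (4): since $T\cup\{0\}$ is an ideal, $\mu G(E)\mu\subseteq T\cup\{0\}$, and each nonzero $\mu g\mu$ equals $\mu(\mu g\mu)\mu\in\mu T\mu$, giving $\mu\,\overline{G(E)}\,\mu=\mu T\mu\cup\{0\}$; and since $s\mapsto\mu s\mu$ is continuous and $\overline{G(E)}=S$, one has $\mu\,\overline{G(E)}\,\mu\subseteq\overline{\mu G(E)\mu}$, which is precisely the assertion that $\mu G(E)\mu$ is dense in $\mu\,\overline{G(E)}\,\mu=\mu T\mu\cup\{0\}$.
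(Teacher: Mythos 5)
Your proof is correct, and parts (3) and (4) coincide with the paper's argument. The differences are in (1) and (2). For (1), the step you single out as the main obstacle is a one-liner in the paper: since $T\cup\{0\}$ is an ideal of $\overline{G(E)}$ by Theorem~\ref{complement_ideal}(2) (a fact you yourself list in your toolkit), $\rho\in T$ gives $\rho\rho^{-1},\rho^{-1}\rho\in T\cup\{0\}$, and these are nonzero because $\rho=\rho\rho^{-1}\rho\neq 0$; your net argument showing directly that $\rho\rho^{-1}\notin G(E)$ (eventual constancy of $\tau_i\tau_j^{-1}$ forcing $\rho\in G(E)$) is valid but re-proves what the ideal property already delivers. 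For (2), your argument is the paper's in net clothing: both rest on Proposition~\ref{dense_idempt} together with Lemma~\ref{idempt_lemma}, on the dichotomy that a nonzero product $yy^{-1}\cdot zz^{-1}$ forces one of the paths to be a prefix of the other, and on the fact that a fixed path has only finitely many prefixes; the paper packages this as a choice of neighborhoods $U,V,W$ with $UV\subseteq W$, $U\cap W=\emptyset$, $0\notin U\cup V\cup W$ and a counting argument, whereas you extract constant subnets and pass to limits to obtain $\mu\nu=\nu$ and $\mu\nu=\mu$. Your version is a bit longer but makes explicit where Hausdorffness (uniqueness of limits) enters. One small point to spell out in (3): besides showing that a product of nonzero elements of $\mu T\mu$ is nonzero, you should note that $\sigma\tau=\mu\sigma\tau\mu$ actually lies in $\mu T\mu$, because $\sigma\tau\in T\cup\{0\}$ by the ideal property; this is immediate from your stated toolkit but is needed for closure of the group.
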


\begin{proof}
(1) Let $\mu = \rho\rho^{-1}$ and $\nu = \rho^{-1}\rho$. Then $\mu, \nu \in I$, and $\rho = \rho\rho^{-1}\rho\rho^{-1}\rho = \mu\rho\nu \in \mu T\nu$. Also, since $\rho\rho^{-1}\rho = \rho \neq 0$ and $\rho^{-1}\rho\rho^{-1} = \rho^{-1} \neq 0$, Theorem~\ref{complement_ideal}(2) implies that $\mu, \nu \in T$.

\vspace{\baselineskip}

(2) Suppose that $\mu, \nu \in I \cap T$ are such that $\mu\nu \neq 0$. Suppose further that $\mu \neq \mu\nu$ (so in particular, $\mu \neq \nu$). Then there exist open neighborhoods $U$ of $\mu$, $V$ of $\nu$, and $W$ of $\mu\nu$ such that $0 \notin U\cup V\cup W$, $U \cap W = \emptyset$, and $UV \subseteq W$ (by continuity of multiplication and the fact that the topology is Hausdorff). By Proposition~\ref{dense_idempt}, $I=\overline{I\cap G(E)}$, from which it follows that $U \cap (I \cap G(E))$ and $V \cap (I \cap G(E))$ are infinite. By Lemma~\ref{idempt_lemma}(1), every idempotent of $G(E)$ is of the form $xx^{-1}$ for some $x \in \path (E)$. Hence, there are $xx^{-1} \in U \cap (I \cap G(E))$ and $yy^{-1} \in V \cap (I \cap G(E))$, for some $x, y \in \path(E)$. Since $0 \notin W$, we have $xx^{-1}yy^{-1} \in \{xx^{-1}, yy^{-1}\}$, by Lemma~\ref{idempt_lemma}(2). Since $U \cap (I \cap G(E))$ is infinite, and $xx^{-1}yy^{-1} = yy^{-1}$ for only finitely many values of $x$ (namely, $x$ satisfying $xp=y$ for some $p \in \path (E)$), we may choose $x$ such that $xx^{-1}yy^{-1} = xx^{-1}$. This, however, contradicts $U \cap W = \emptyset$. Thus, $\mu = \mu\nu$, and by a similar argument, $\mu\nu = \nu$. Therefore, $\mu = \nu$.

\vspace{\baselineskip}

(3) Let $\rho, \tau \in \mu T \mu \setminus \{0\}$ be any elements. Then $\rho\rho^{-1}, \rho^{-1}\rho \in I\cap T$, and since $\mu\rho\rho^{-1} = \rho\rho^{-1} \neq 0$ and $\rho^{-1}\rho \mu = \rho^{-1}\rho \neq 0$, (2) implies that $\mu = \rho\rho^{-1} = \rho^{-1}\rho$. By similar reasoning $\mu = \tau\tau^{-1}$, and hence $\mu = \mu\mu = \rho^{-1}\rho \tau\tau^{-1}$, implying that $\rho \tau \neq 0$. Thus, by Theorem~\ref{complement_ideal}(2), $\rho\tau = \mu\rho\tau\mu \in \mu T \mu \setminus \{0\}$. Also, $$\rho^{-1} = (\mu\rho\mu)^{-1} = \mu^{-1} \rho^{-1} \mu^{-1} = \mu \rho^{-1} \mu \in \mu T \mu \setminus \{0\}.$$ It follows that $\mu T \mu \setminus \{0\}$ is a group with identity $\mu$.

\vspace{\baselineskip}

(4) By Theorem~\ref{complement_ideal}(2), $T \cup \{0\}$ is an ideal in $\overline{G(E)}$, and hence $\mu G(E) \mu \subseteq T \cup \{0\}$. Since $\overline{G(E)} = T \cup G(E)$, it follows that $\mu \overline{G(E)} \mu = \mu T\mu \cup \{0\}$. Also, $\mu G(E)\mu$ is dense in $\overline{\mu G(E)\mu}$, which contains $\mu \overline{G(E)}\mu$, by the continuity of multiplication, and hence $\mu G(E)\mu$ is dense in $\mu \overline{G(E)}\mu$.
\end{proof}

\begin{lemma} \label{id}
Let $E$ be a graph, suppose that $G(E)$ is a subsemigroup of a Hausdorff topological semigroup, and let $\mu \in \overline{G(E)}$ be an idempotent. Then there is a vertex $v \in E^0$ such that $v\mu = \mu = \mu v$.
\end{lemma}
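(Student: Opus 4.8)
The plan is to treat separately the cases $\mu \in G(E)$ and $\mu \in \overline{G(E)}\setminus G(E)$. The first case is formal: if $\mu = 0$, then any vertex $v$ works because $v\cdot 0 = 0 = 0\cdot v$ by the basic fact recalled in Subsection~\ref{top-sect} (applied with $G(E)$ dense in the $T_1$ topological semigroup $\overline{G(E)}$); and if $\mu \neq 0$, then Lemma~\ref{idempt_lemma}(1) lets me write $\mu = xx^{-1}$ with $x\in\path(E)$, and $v:=s(x)$ does the job since $s(x)xx^{-1}=xx^{-1}$ by (E1) and $xx^{-1}s(x)=xx^{-1}$ by (E2).

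So assume $\mu\in\overline{G(E)}\setminus G(E)$; then $\mu\neq 0$, and since $\{0\}$ is closed we have $\mu\in\overline{G(E)\setminus\{0\}}$. The idea is to use idempotency, $\mu^2=\mu$, to localise. First I would choose (using Hausdorffness) an open $U\ni\mu$ with $0\notin U$, and then, by continuity of multiplication, an open $W\ni\mu$ with $W\subseteq U$ and $WW\subseteq U$, so that $ab\neq 0$ whenever $a,b\in W$. Since $W\cap(G(E)\setminus\{0\})\neq\emptyset$, I fix one element $a_0=p_0q_0^{-1}\in W\cap(G(E)\setminus\{0\})$ ($p_0,q_0\in\path(E)$) and set $v:=s(q_0)$, $v':=s(p_0)$. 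The crucial observation is then: for every $b=rs^{-1}\in W\cap(G(E)\setminus\{0\})$ ($r,s\in\path(E)$), the products $a_0b=p_0(q_0^{-1}r)s^{-1}$ and $ba_0=r(s^{-1}p_0)q_0^{-1}$ lie in $WW$, hence are nonzero; non-vanishing of $a_0b$ forces $q_0^{-1}r\neq 0$, so $q_0$ and $r$ are comparable as paths (one a prefix of the other) and in particular $s(r)=s(q_0)=v$, whence $vb=b$; symmetrically $ba_0\neq 0$ forces $s(s)=s(p_0)=v'$ and $bv'=b$.

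Having shown that left multiplication by $v$ and right multiplication by $v'$ both fix every element of $W\cap(G(E)\setminus\{0\})$, I would finish as follows. The subsets $\{y\in\overline{G(E)}:vy=y\}$ and $\{y\in\overline{G(E)}:yv'=y\}$ are closed, being the loci where two continuous self-maps of the Hausdorff space $\overline{G(E)}$ agree; they contain $W\cap(G(E)\setminus\{0\})$, whose closure contains $\mu$, so they contain $\mu$. Thus $v\mu=\mu=\mu v'$, which gives $\mu=v\mu v'$ and then $\mu=\mu^2=v\mu(v'v)\mu v'$. If $v\neq v'$ then $v'v=0$ by relation (V), forcing $\mu=0$, a contradiction; hence $v=v'$, and $v\mu=\mu=\mu v$.

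I expect the main obstacle to be the second paragraph. A priori, elements of $G(E)$ converging to $\mu$ might involve infinitely many distinct vertices, and it is not clear how to extract a single relevant one. The point is that idempotency lets me pass to a neighbourhood $W$ with $0\notin WW$, so that a single fixed element $a_0\in W\cap G(E)$ acts as a filter forcing the source vertices of the numerator and denominator paths of every other element of $W\cap G(E)$ to coincide with those of $a_0$; once that is in hand, the passage to the limit and the elimination of the case $v\neq v'$ are routine.
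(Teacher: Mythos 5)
Your proof is correct, and the supporting claims check out: the non-vanishing of $a_0b$ and $ba_0$ does force $s(r)=s(q_0)$ and $s(s)=s(p_0)$, the equalizer $\{y : vy=y\}$ is indeed closed, and $\mu$ lies in the closure of $W\cap(G(E)\setminus\{0\})$, so the passage to the limit is legitimate. The route differs from the paper's in the middle. The paper also starts from $\mu^2=\mu$ and a neighborhood $U\not\ni 0$, but it multiplies a neighborhood of $\mu$ by $\mu$ itself ($V\mu\subseteq U$) to produce a vertex $w=s(y)$ with $w\mu\neq 0$, and then proves, by a second Hausdorff separation (of $\mu$ from the hypothetical value $\nu=v\mu$), the dichotomy that for any vertex $v$, $v\mu\neq 0$ implies $v\mu=\mu$; the left and right vertices are then merged exactly as in your last step, via $\mu=\mu wv\mu$ and relation (V). You never multiply by $\mu$: instead you use $WW\subseteq U$ together with a fixed anchor $a_0\in W\cap G(E)$ to show that every element of $W\cap(G(E)\setminus\{0\})$ shares the source vertices $v,v'$ and is fixed by them, and then transfer the identities $vy=y$, $yv'=y$ to $\mu$ by closedness of the equalizer. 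The paper's version yields the reusable standalone fact $v\mu\in\{0,\mu\}$ for every vertex $v$ (in the same spirit as Proposition~\ref{idempt_in_T}(2)), while yours stays entirely inside $G(E)$ until the final limit and replaces the second separation argument with a standard closed-equalizer observation; both are elementary and of comparable length. Two cosmetic remarks: for $\mu=0\in G(E)$ you do not need the density fact, since $v\cdot 0=0=0\cdot v$ already holds in $G(E)$; and your final contradiction uses $0\cdot\overline{G(E)}=\{0\}=\overline{G(E)}\cdot 0$ (equivalently, that $\overline{G(E)}$ is a topological semigroup containing $G(E)$ densely), which is the same fact the paper invokes at the corresponding point, so it is at the paper's level of rigor.
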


\begin{proof}
First, suppose that $v, w \in E^0$ are such that $v \mu = \mu = \mu w$. Then $\mu = \mu\mu = \mu w v \mu$ implies that $v=w$ (since $0\cdot \overline{G(E)} = \{0\} = \overline{G(E)} \cdot 0$), and hence $v\mu = \mu = \mu v$. In particular, the statement clearly holds for idempotents $\mu$ in $G(E)$. Let us therefore assume that $\mu \in \overline{G(E)} \setminus G(E)$ and find $v, w \in E^0$ such that $v \mu = \mu = \mu w$. 

Let $U$ be an open neighborhood of $\mu$ such that $0 \notin U$. Since $\mu \mu = \mu$, by the continuity of multiplication we can find an open neighborhood $V$ of $\mu$ such that $V\mu \subseteq U$. Since $\mu$ is a limit point of $G(E)$, we can find some $\rho = xy^{-1} \in V \cap G(E)$ ($x, y \in \path (E)$). Since $0 \notin U$, we have $\rho\mu \neq 0$, and hence $s(y)\mu \neq 0$. By a similar argument, there must be some $v \in E^0$ such that $\mu v \neq 0$.

Now, suppose that $v\mu = \nu \neq 0$ for some $v \in E^0$ and $\nu \in \overline{G(E)} \setminus \{\mu\}$. Then we can find open neighborhoods $U$ and $V$ of $\mu$ and $\nu$, respectively, such that $vU \subseteq V$, $U\cap V = \emptyset$, and $0 \notin V$. Again, there must be some element $\rho \in U \cap G(E)$, and necessarily $v\rho \neq 0$. It follows that $\rho = v\rho \in U \cap V$; a contradiction. Hence if $v \mu \neq 0$, then $v\mu = \mu$, and similarly if $\mu v \neq 0$, then $\mu v = \mu$.
\end{proof}

The following is a generalization of \cite[Corollary III.3]{ES}, which says that, again letting $\overline{P_1}$ denote the closure of the bicyclic monoid $P_1$ in a Hausdorff topological inverse semigroup, the group $\overline{P_1} \setminus P_1$ contains a dense cyclic subgroup.

\begin{theorem} \label{comp-gp-thrm}
Let $E$ be a graph, and suppose that $G(E)$ is a subsemigroup of a Hausdorff topological inverse semigroup. Set $T = \overline{G(E)}\setminus G(E)$, and let $\mu \in T$ be an idempotent. Then either the group $\mu T \mu \setminus \{0\}$ is trivial, or it contains a dense cyclic subgroup.
\end{theorem}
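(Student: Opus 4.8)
The plan is to use Proposition~\ref{idempt_in_T}(4), which tells us that $\mu G(E)\mu$ is dense in $\mu\overline{G(E)}\mu = (\mu T\mu\setminus\{0\})\cup\{0\}$, together with Lemma~\ref{id}, which provides a vertex $v\in E^0$ with $v\mu=\mu=\mu v$. The first step is to understand $\mu G(E)\mu$ algebraically. For any $\rho = xy^{-1}\in G(E)$, the product $\mu\rho\mu = (v\mu)(xy^{-1})(\mu v) = \mu(vxy^{-1}v)\mu$, so up to the outer $\mu$'s only those $\rho$ with $s(x)=s(y)=v$ contribute nonzero elements, and all such $\rho$ lie in $vG(E)v$. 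Now set $S = \{\rho\in G(E) : \mu\rho\mu\neq 0\}\cup\{0\}$. I would first check that $S$ is an inverse subsemigroup of $G(E)$: closure under inverse is immediate since $\mu(\rho^{-1})\mu = (\mu\rho\mu)^{-1}$ in the topological inverse semigroup (using Proposition~\ref{inverse_subsemi}), and closure under multiplication uses that $\mu T\mu\setminus\{0\}$ is a group (Proposition~\ref{idempt_in_T}(3)), so if $\mu\rho\mu\neq 0\neq\mu\sigma\mu$ then $\mu\rho\mu\cdot\mu\sigma\mu = \mu\rho\mu\sigma\mu\neq 0$, forcing $\mu(\rho\sigma)\mu\neq 0$ (since $\mu\rho\mu\sigma\mu$ is a nonzero scalar-free product sandwiching $\rho\sigma$ appropriately — this needs a short verification that $\mu\rho\mu\sigma\mu = \mu\rho\sigma\mu$, which holds because $v\mu=\mu=\mu v$ and $v$ is an idempotent absorbing the middle). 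Crucially, $S$ then satisfies the hypothesis of Theorem~\ref{no_zero_div_description}: for $\rho,\sigma\in S\setminus\{0\}$, $\rho\sigma\neq 0$ (indeed $\mu\rho\sigma\mu\neq 0$ already forces $\rho\sigma\neq 0$).

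Then Theorem~\ref{no_zero_div_description} gives an element $\alpha\in S$ such that $S$ is generated as a semigroup by $\alpha$ and the idempotents of $S$. Applying the (continuous, by Proposition~\ref{inverse_subsemi}) map $\rho\mapsto\mu\rho\mu$, whose image on $S$ is exactly $\mu G(E)\mu$, we get that $\mu G(E)\mu$ is generated as a semigroup by $\mu\alpha\mu$ and elements of the form $\mu\beta\mu$ with $\beta$ an idempotent of $S$. But every idempotent $\beta=xx^{-1}\in S$ satisfies $\mu\beta\mu\in\mu\overline{G(E)}\mu$, which is an idempotent of $\mu\overline{G(E)}\mu$; since $\mu T\mu\setminus\{0\}$ is a group with identity $\mu$, its only idempotent is $\mu$ itself, so $\mu\beta\mu\in\{0,\mu\}$. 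Hence $\mu G(E)\mu\subseteq\{0,\mu\}\cup\{(\mu\alpha\mu)^n : n\geq 1\}$ — that is, $\mu G(E)\mu$ is contained in the cyclic submonoid (with $0$) generated by the single element $g:=\mu\alpha\mu$. If $g=0$ or $g=\mu$, then $\mu G(E)\mu\subseteq\{0,\mu\}$ is finite, so by density its closure $(\mu T\mu\setminus\{0\})\cup\{0\}$ is finite; being a finite subgroup-with-zero that is the closure of $\{0,\mu\}$ in a Hausdorff space, one checks $\mu T\mu\setminus\{0\}=\{\mu\}$ is trivial. Otherwise $g\in\mu T\mu\setminus\{0\}$, and $g,g^2,g^3,\dots$ lie in the group $\mu T\mu\setminus\{0\}$; the cyclic subgroup $\langle g\rangle$ generated by $g$ contains all the $g^n$ (and, being a group, the inverses too), and its closure contains $\overline{\{g^n : n\geq 1\}}\cup\{\mu\}\supseteq\overline{\mu G(E)\mu}\setminus\{0\} = \mu T\mu\setminus\{0\}$. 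Thus $\langle g\rangle$ is a dense cyclic subgroup of the group $\mu T\mu\setminus\{0\}$.

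I expect the main obstacle to be the bookkeeping around how $\mu$, $v$, and the multiplication interact — specifically verifying cleanly that $S$ is a subsemigroup and that the map $\rho\mapsto\mu\rho\mu$ respects products on $S$ (i.e.\ $\mu\rho\mu\sigma\mu = \mu\rho\sigma\mu$ for $\rho,\sigma\in S$). This is where the fact that $T\cup\{0\}$ is an ideal (Theorem~\ref{complement_ideal}(2)) and that $\mu T\mu\setminus\{0\}$ is genuinely a group (not just a monoid-with-zero) get used: the group structure is what prevents ``collapse'' of products to zero and lets me conclude $\mu(\rho\sigma)\mu\neq 0$ from $\mu\rho\mu\neq 0\neq\mu\sigma\mu$. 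A secondary point requiring care is the degenerate case: I must rule out, or rather correctly handle, the possibility that $\mu G(E)\mu$ is finite, showing it forces triviality of the group rather than some other small finite group — this follows because a Hausdorff topological group which is the closure of a finite (hence already closed) set must equal that set, and the only way $\{0,\mu\}\cup(\text{cyclic powers})$ closes up to a group-with-zero is if the group is $\{\mu\}$. Everything else is a direct assembly of Proposition~\ref{idempt_in_T}, Lemma~\ref{id}, Theorem~\ref{no_zero_div_description}, and the observation that a group has a unique idempotent.
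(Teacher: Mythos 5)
There is a genuine gap at the foundation of your argument: the set $S = \{\rho\in G(E) : \mu\rho\mu\neq 0\}\cup\{0\}$ is not obviously (and in general is not) closed under multiplication, and the map $\rho\mapsto\mu\rho\mu$ is not obviously a homomorphism on it. Both of these rest on your claimed identity $\mu\rho\mu\sigma\mu = \mu\rho\sigma\mu$, which you justify by saying that $v$ ``absorbs the middle''; but the middle factor is $\mu$, not $v$, and $\mu\in T$ is emphatically not equal to the vertex $v$ (it lies outside $G(E)$), so $\rho\mu\sigma$ cannot be replaced by $\rho v\sigma=\rho\sigma$. Indeed $\mu$ behaves like a proper projection: thinking of $\mu$ as a limit of idempotents $x_nx_n^{-1}$ along an infinite path $w$, one has $\mu\, ab^{-1}\mu\neq 0$ roughly when both $a$ and $b$ are prefixes of $w$; taking $w=\alpha\beta\beta\cdots$ in $P_2$, $\rho=\alpha^{-1}$ and $\sigma=\alpha\beta$ both satisfy this, yet $\rho\sigma=\beta$ is not a prefix of $w$, so $\mu\rho\sigma\mu=0$ while $(\mu\rho\mu)(\mu\sigma\mu)\neq 0$. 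So you cannot conclude $\rho\sigma\in S$, Theorem~\ref{no_zero_div_description} does not apply to $S$, and the subsequent claim that $\mu G(E)\mu$ is generated by $\mu\alpha\mu$ and idempotents collapses, since it also needs $\rho\mapsto\mu\rho\mu$ to respect products.

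The paper's proof is structured exactly to avoid this: it works with the smaller set $A=\{\nu\in G(E) : \mu\nu=\nu\mu\neq 0\}$ of elements \emph{commuting} with $\mu$. Commutation plus idempotency of $\mu$ gives $\mu\nu\mu=\mu\mu\nu=\mu\nu$, which is what makes $A$ closed under products (via the group structure of $\mu T\mu\setminus\{0\}$, as you intended) and makes $\nu\mapsto\mu\nu$ a genuine homomorphism into that group, whose image is then $\{\mu\}$ or cyclic by Theorem~\ref{no_zero_div_description}. The price is that density of the image is no longer automatic from Proposition~\ref{idempt_in_T}(4): one must show that any $\delta\in G(E)$ lying in a suitable neighborhood of a given point of $\mu T\mu\setminus\{0\}$ actually commutes with $\mu$, which the paper does using Proposition~\ref{idempt_in_T}(1),(2) to force $\mu\delta=\mu\delta\mu=\delta\mu$. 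Your proposal has no analogue of this step because you tried to work with all of $\mu G(E)\mu$ at once; that is precisely where the argument breaks. The remaining ingredients of your outline (Lemma~\ref{id}, the unique idempotent of a group, the degenerate finite case giving the trivial group) are fine and agree with the paper.
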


\begin{proof}
Let $A = \{\nu \in G(E) : \mu\nu = \nu\mu \neq 0\}$. By Lemma~\ref{id}, we can find $v \in E^0$ such that $v\mu = \mu v = \mu$, which implies that $A \neq \emptyset$. We wish to show that $A$ is an inverse semigroup. Let $\nu, \gamma \in A$. Then $\mu\nu = \mu\nu\mu \in \mu T \mu \setminus \{0\}$, by Theorem~\ref{complement_ideal}(2), and since $\mu T \mu\setminus \{0\}$ is a group, we have $\mu\nu\gamma = (\mu\nu\mu)(\mu\gamma\mu) \in \mu T \mu\setminus \{0\}$. Therefore $\mu\nu\gamma \neq 0$, and since $\mu\nu\gamma = \nu\mu\gamma = \nu\gamma\mu$, this implies that $\nu\gamma \in A$. Also, for any $\nu  \in A$ we have $$\mu\nu^{-1}\mu = (\mu^{-1}\nu\mu^{-1})^{-1} = (\mu\nu\mu)^{-1} = (\mu\nu)^{-1} = (\nu\mu)^{-1}.$$ Thus $\mu\nu^{-1} = \nu^{-1}\mu \neq 0$, and therefore $\nu^{-1} \in A$, showing that $A$ is an inverse semigroup. Also, since $\nu\gamma \neq 0$ for all $\nu, \gamma \in A$, by Theorem~\ref{no_zero_div_description} there is an element $\tau \in A$ such that $A$ is generated by $\tau$ and the set $I$ of idempotents of $A$. Note that $I \neq \emptyset$, since as mentioned above, $A$ must contain a vertex.

Now, define $f : A \rightarrow \mu T \mu \setminus \{0\}$ by $\nu \mapsto \mu\nu \, (= \mu\nu\mu)$. Then $$f(\nu\gamma) = \mu\nu\gamma = \mu\nu\mu\gamma = f(\nu)f(\gamma)$$ for all $\nu, \gamma \in A$, and hence $f$ is a homomorphism. Since there is only one idempotent in any group, $f(I) = \{\mu\}$, and hence $f(A)$ is the subgroup of $\mu T \mu \setminus\{0\}$ generated by $f(\tau)$. Thus, either $f(A) = \{\mu\}$, or $f(A)$ is a cyclic subgroup of $\mu T \mu \setminus\{0\}$. Therefore, to prove the theorem we need only show that $f(A)$ is dense in $\mu T \mu \setminus \{0\}$.

Let $\rho \in \mu T \mu \setminus \{0\}$ be any element, and let $U$ be an open neighborhood of $\rho$. Since our topology is Hausdorff, we may assume that $0 \notin U$. Since $\mu\rho = \rho = \rho \mu$, by the continuity of multiplication, we can find an open neighborhood $V$ of $\rho$ such that $\mu V \mu \subseteq U$. Let $\delta \in V \cap G(E)$ be any element. Since $\mu\delta \neq 0 \neq \delta\mu$, by Theorem~\ref{complement_ideal}(2) we have  $\mu\delta, \delta\mu \in T \setminus \{0\}$. By Proposition~\ref{idempt_in_T}(1), there are idempotents $\theta, \eta \in T$ such that $\mu \delta \in \theta T \eta$. But, since $\mu(\mu\delta)\mu \neq 0$, by Proposition~\ref{idempt_in_T}(2), this can only happen if $\mu = \theta = \eta$. Therefore, $\mu\delta \in \mu T \mu \setminus \{0\}$, and similarly $\delta\mu \in \mu T \mu \setminus \{0\}$. Hence, $\mu \delta = \mu \delta \mu = \delta \mu$, from which it follows that $\delta \in A$, and therefore $\mu\delta \in f(A) \cap U$. Thus $f(A)$ is dense in $\mu T \mu \setminus \{0\}$.
\end{proof}

\section{Polycyclic Monoids}

Recall that if $E$ is a graph having only one vertex $v$ and $n$ edges (necessarily loops), for some integer $n \geq 1$, then $G(E)$ is known as a \emph{polycyclic monoid}, and we denote it by $P_n$.

We conclude this article with some observations on the possible sizes of the closures of $P_n$ inside larger topological semigroups.

\begin{proposition}
Let $n \geq 2$ be an integer, and suppose that $P_n$ is a subsemigroup of a Hausdorff topological semigroup. Then $\overline{P_n}\setminus P_n$ is either empty or infinite.
\end{proposition}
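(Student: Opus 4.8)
The plan is to argue by contradiction. Suppose $\overline{P_n}\setminus P_n$ is nonempty and finite; I will derive a contradiction. Replacing the ambient semigroup by $\overline{P_n}$, I may assume $P_n$ is dense in a Hausdorff topological semigroup $S$, and I write $T = S\setminus P_n = \overline{P_n}\setminus P_n$. Fix $\rho \in T$; since $0 \in P_n$ we have $\rho \neq 0$. By Theorem~\ref{complement_ideal}(2), $T\cup\{0\}$ is an ideal of $S$, so (being finite, by assumption) it is a finite semigroup. I will also use the fact from Subsection~\ref{top-sect} that $0\cdot\sigma = 0$ for every $\sigma\in S$.

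The first main step is to show that for each generator $e_i$ of $P_n$ there is an integer $m_i\geq 1$ with $\rho = e_i^{m_i}\rho$. Indeed, each $e_i^{\,j}\rho$ ($j\geq 0$) lies in the ideal $T\cup\{0\}$, a finite set, so $e_i^{\,j}\rho = e_i^{\,k}\rho$ for some $0\leq j<k$. Left-multiplying by $(e_i^{-1})^j\in P_n$ and using the relation $e_i^{-1}e_i = 1$ repeatedly — so that $(e_i^{-1})^j e_i^{\,j} = 1$ and $(e_i^{-1})^j e_i^{\,k} = e_i^{\,k-j}$ — gives $\rho = e_i^{m_i}\rho$ with $m_i = k-j\geq 1$. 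This is the step I expect to be the crux: the point is that the finiteness of $T$ promotes $T\cup\{0\}$ to a finite semigroup on which a pigeonhole/periodicity argument runs, and then cancellation against the genuine one-sided inverse $(e_i^{-1})^j$ (which lives in $P_n$) is exact because $e_i^{-1}e_i = 1$.

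Now $n\geq 2$ enters. Apply the previous step to $e_1$ and $e_2$, obtaining $m_1,m_2\geq 1$ with $\rho = e_1^{m_1}\rho$ and $\rho = e_2^{m_2}\rho$; hence $\rho = e_1^{m_1}(e_2^{m_2}\rho) = (e_1^{m_1}e_2^{m_2})\rho$. On the other hand $(e_2^{-1})^{m_2}e_2^{m_2} = 1$, so $(e_2^{-1})^{m_2}\rho = (e_2^{-1})^{m_2}e_2^{m_2}\rho = \rho$, while also $(e_2^{-1})^{m_2}\rho = \bigl((e_2^{-1})^{m_2}e_1^{m_1}e_2^{m_2}\bigr)\rho$. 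Since $m_1,m_2\geq 1$, the word $(e_2^{-1})^{m_2}e_1^{m_1}e_2^{m_2}$ contains the factor $e_2^{-1}e_1$, which equals $0$ by relation (CK1) because $e_1\neq e_2$; hence $(e_2^{-1})^{m_2}e_1^{m_1}e_2^{m_2} = 0$ and $(e_2^{-1})^{m_2}\rho = 0\cdot\rho = 0$. Therefore $\rho = 0$, contradicting $\rho\neq 0$.

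Consequently $T$ cannot be finite and nonempty, which is the claim. Note that the argument genuinely needs $n\geq 2$: it relies on having a zero divisor $e_2^{-1}e_1 = 0$, and for $P_1$ (the bicyclic monoid) the analogous conclusion fails, as the complement of $P_1$ in its closure can be a one-point group.
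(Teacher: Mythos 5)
Your argument is correct, but it takes a genuinely different route from the paper's. The paper proves infinitude directly: it writes $P_n = \{0,1\}\cup\bigcup_i e_iP_n\cup\bigcup_i P_ne_i^{-1}$, so any $\mu\in\overline{P_n}\setminus P_n$ lies in (say) $\overline{e_1P_n}$, and then shows that the elements $e_2^j\mu$ ($j\in\N$) are pairwise distinct --- if $e_2^j\mu=e_2^k\mu$, then cancelling gives $\mu=(e_2^{-1})^{j-k}\mu\in\overline{(e_2^{-1})^{j-k}e_1P_n}=\{0\}$ by continuity of multiplication --- and all lie in $(\overline{P_n}\setminus P_n)\cup\{0\}$ by Theorem~\ref{complement_ideal}(2), yielding an explicit infinite family in the complement. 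You instead argue by contradiction from finiteness: pigeonhole inside the finite ideal $T\cup\{0\}$ gives the periodicity relations $\rho=e_1^{m_1}\rho$ and $\rho=e_2^{m_2}\rho$, and then the zero relation $e_2^{-1}e_1=0$ forces $\rho=0$. The two proofs share their essential ingredients (Theorem~\ref{complement_ideal}(2) and the relation $e_2^{-1}e_1=0$, which is exactly where $n\geq 2$ enters in both), but your pigeonhole on the finite ideal replaces the paper's decomposition into the pieces $\overline{e_iP_n}$, $\overline{P_ne_i^{-1}}$ and its continuity step, at the cost of only refuting finiteness rather than exhibiting infinitely many elements. One step you should make explicit: your cancellations actually produce $1\cdot\rho$, not $\rho$, and $1$ is a priori only an identity for $P_n$, not for limit points outside it; you need $1\cdot\rho=\rho$ for $\rho\in\overline{P_n}$, which follows from the standard density-plus-continuity argument in a Hausdorff space, exactly parallel to the proof of $0\cdot\mu=0$ in Subsection~\ref{top-sect}. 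Since the paper's own proof uses the same fact tacitly in the step ``$e_2^j\mu=e_2^k\mu$, hence $\mu=e_2^{k-j}\mu$'', this is a one-line addition rather than a flaw in your approach.
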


\begin{proof}
Suppose that $\overline{P_n}\setminus P_n \neq \emptyset$, and let $\mu \in \overline{P_n}\setminus P_n$. Letting $e_1, \dots, e_n$ be the generators of $P_n$ as an inverse semigroup with zero, we have $$P_n = \{0,1\} \cup \bigcup_{i=1}^n e_iP_n \cup \bigcup_{i=1}^n P_ne_i^{-1},$$ and hence $$\overline{P_n} = \overline{\{0,1\}} \cup \bigcup_{i=1}^n \overline{e_iP_n} \cup \bigcup_{i=1}^n \overline{P_ne_i^{-1}}.$$ Since $\overline{\{0,1\}} = \{0,1\}$, either $\mu \in \overline{e_iP_n}$ or $\mu \in \overline{P_ne_i^{-1}}$ for some $i$. Let us assume that $\mu \in \overline{e_1P_n}$, as the other cases can be handled analogously. We wish to show that $e_2^j \mu \neq e_2^k \mu$ for all distinct $j,k \in \N$. The desired result will then follow, since by Theorem~\ref{complement_ideal}(2), $e_2^j \mu \in (\overline{P_n} \setminus P_n)\cup \{0\}$ for all $j$. (It is easy to see that $\overline{P_n}$ is a topological semigroup.)

Suppose, on the contrary, that $e_2^j \mu = e_2^k \mu$ for some $j > k$, and hence that $\mu =e_2^{k-j}\mu$. Since $\mu \in \overline{e_1P_n}$, by the continuity of multiplication, it follows that $\mu =e_2^{k-j}\mu \in \overline{e_2^{k-j}e_1P_n}$. But $e_2^{k-j}e_1 = 0$, and hence $\mu = 0$ (since $0 \cdot \overline{P_n} = \{0\} = \overline{P_n} \cdot 0$), contradicting our choice of $\mu$. Thus, $e_2^j \mu \neq e_2^k \mu$ for all distinct $j,k \in \N$, as required.
\end{proof}

In contrast to the above result, $P_1$ can be embedded in a metrizable semigroup $S$, such that  $\overline{P_1}=S$ and $|S\setminus P_1| = 1$, as the next example shows.

\begin{example}
Let $S=P_1\cup\{\delta\}$, and extend the multiplication operation of $P_1$ to $S$ as follows. For all $\mu \in P_1 \setminus \{0\}$ let $\mu\delta=\delta \mu=\delta$, set $0\delta=\delta 0=0$, and let $\delta \delta=\delta$. Then $S$ is clearly an inverse semigroup (with $\delta^{-1} = \delta$). Define $d : S \times S \to \R$ by $d(\mu,\nu)=1$ if $(\mu,\nu)\in (P_1\times P_1)\cup \{(0,\delta), (\delta,0)\}$ with $\mu \neq \nu$, set $$d(\delta, e^ne^{-m}) = d(e^ne^{-m},\delta) =\frac{1}{\min\{n,m\}+1}$$ for all $n, m \in \N$, where $e$ is the generator of $P_1$ as an inverse semigroup with zero, and let $d(\mu,\nu)=0$ whenever $\mu = \nu$. It is easy to check that $d$ is a metric, and that $\overline{P_1}=S$.

It remains to show that the multiplication in $S$ is continuous with respect
to the topology induced by $d$. Letting $\mu,\nu\in S$ be any elements and $U$ an open neighborhood of $\mu\nu$, we wish to find open neighborhoods $V$ and $W$ of $\mu$ and $\nu$, respectively, such that $VW \subseteq U$. First suppose that $\mu\nu=0$. Then either $\mu=0$ or $\nu=0$. Let us assume that $\mu=0$, as the other case can be handled analogously. Then taking $V = \{0\}$ and $W=S$, we have $VW \subseteq U$. We may therefore assume that $\mu\nu\neq 0$, and hence that $\mu\neq 0 \neq \nu$. Now, view $P_1$ as a topological semigroup, using the topology constructed in Proposition~\ref{non-disc-top}. Then defining $F: S\setminus\{0\} \to P_1$ by $F(\tau)=\tau$ for $\tau \neq \delta$ and $F(\delta)=0$, gives a homeomorphism. It follows that there are open neighborhoods $V$ and $W$ of $\mu$ and $\nu$, respectively, such that $VW \subseteq U$, when $\mu,\nu,\mu\nu \neq 0$. Hence multiplication is continuous on all of $S$. 
\end{example}

Returning to $P_n$ with $n \geq 2$, we next construct a metrizable topological semigroup $S$ containing a dense copy of $P_2$, such that $|S\setminus P_2|=\aleph_0$ and $P_2$ is not discrete.

\begin{example}
Let $S$ be the monoid with zero element defined by the presentation:
$$\langle e, f, e^{-1}, f^{-1}, X : e^{-1}f=f^{-1}e=0,\ e^{-1}e=f^{-1}f=1,
\ eX=Xf^{-1}=X, \ e^{-1}X=Xf=0\rangle.$$ Let $\langle e,f \rangle$ denote the subsemigroup of $S$ generated by $\{e,f\}$, and set $$A=\{xe : x \in \langle e,f \rangle\} \cup\{1\} \ \text{ and } \ B= \{xf : x \in \langle e,f \rangle\} \cup\{1\}.$$ It is easy to show that every element of $S$ is of the form: $ue^mf^{-n}v^{-1}$ or $uXv^{-1}$, where $m,n\in\N$, $u\in B$, and $v\in A$. In particular, $S$ contains a copy $P_2$ of the polycyclic inverse monoid on two generators. Note that $$X^2=Xf^{-1}eX=0,\quad Xe=Xf^{-1}e=0,\quad f^{-1}X=f^{-1}eX=0.$$ So that every element of $S$ can be written in the form $ue^{m}f^{-n}v^{-1}$, we make the conventions $X=1\cdot e^{\infty}f^{-\infty}\cdot 1$ and $0=0\cdot e^{\infty}f^{-\infty}\cdot 0$.

Given any $\sigma \in S$, write $\sigma = u_\sigma e^{m_\sigma}f^{-n_\sigma}v_\sigma^{-1}$, with $m_\sigma, n_\sigma \in \N \cup \{\infty\}$, where $u_\sigma \in \{0,1\}$ or $u_\sigma=e^{a_0}f^{a_1}\cdots e^{a_{k(\sigma)-1}}f^{a_{k(\sigma)}},$ and $v_\sigma \in \{0,1\}$ or $v_\sigma=f^{b_0}e^{b_1}\cdots f^{b_{l(\sigma)-1}}e^{b_{l(\sigma)}},$ for some $k(\sigma), l(\sigma) \geq 1$, $a_0, b_0 \in \N$ and $a_1, \ldots, a_{k(\sigma)}, b_1, \ldots, b_{l(\sigma)} \in \N \setminus \{0\}$. Using this notation, define $\Xi : S \to \N \cup \{\infty\}$ by
\begin{equation*}
  \Xi(\sigma)=
  \begin{cases}
    0&\text{if }u_\sigma=1\text{ or }v_\sigma=1\\
    \infty&\text{if } \sigma = 0\\
    \min\{k(\sigma), l(\sigma) \}&\text{otherwise}.
  \end{cases}
\end{equation*}
Note that $\Xi(\sigma \tau)\geq \min\{\Xi(\sigma), \Xi(\tau)\}$ for all $\sigma, \tau\in S$. Using the same notation, define $\Delta:S\times S\to \R$ by 
\begin{equation*}
  \Delta(\sigma, \tau)=
  \begin{cases}
    0 & \text{if } (u_\sigma, v_\sigma) = (u_\tau, v_\tau)\\
   \displaystyle \frac{1}{1+\min\{\Xi(\sigma), \Xi(\tau)\}} & \text{if }  (u_\sigma, v_\sigma) \neq (u_\tau, v_\tau),
  \end{cases}
\end{equation*}
and define $\Phi:S\times S\to \R$ by 
\begin{equation*}
  \Phi(\sigma, \tau)=
  \begin{cases}
    0 & \text{if } (m_\sigma, n_\sigma) = (m_\tau, n_\tau)\\
   \displaystyle \frac{1}{1+\min\{m_\sigma, n_\sigma, m_\tau, n_\tau\}} & \text{if }  (m_\sigma, n_\sigma) \neq (m_\tau, n_\tau).
  \end{cases}
\end{equation*}
Finally, define $d:S\times S\to \R$ by $$d(\sigma, \tau) =\Delta(\sigma, \tau)+\Phi(\sigma, \tau).$$ It can be shown easily that $d$ is a metric, using the fact that $\Delta$ and $\Phi$ are symmetric and satisfy the triangle inequality.

Since every element of $S\setminus P_2$ is of the form $ue^{\infty}f^{-\infty}v^{-1}$ for some $u\in B$ and $v\in A$, $$d(ue^nf^{-n}v^{-1}, ue^{\infty}f^{-\infty}v^{-1})=\Phi(ue^nf^{-n}v^{-1}, ue^{\infty}f^{-\infty}v^{-1})=\frac{1}{1+n}$$ holds for all $n \in \N$, and so $P_2$ is dense in $S$. Also, if $\sigma=u_\sigma e^{m_\sigma}f^{-n_\sigma}v_\sigma^{-1}\in P_2\setminus \{0\}$ and $\tau \in S$ are any elements, then $m_\sigma, n_\sigma \in\N$ and $$d(\sigma, \tau)\geq \Phi(\sigma, \tau) \geq \frac{1}{1+\min\{m_\sigma, n_\sigma\}},$$ and hence $P_2\setminus \{0\}$ is discrete in $S$. Keeping $\sigma=u_\sigma e^{m_\sigma}f^{-n_\sigma}v_\sigma^{-1}\in P_2\setminus \{0\}$ as before, we also see that $$d(\sigma, 0) =  \frac{1}{1+\Xi(\sigma)} + \frac{1}{1+\min\{m_\sigma, n_\sigma\}},$$ which implies that $0$ is a limit point, since $\Xi(\sigma)$, $m_\sigma$, and $n_\sigma$ can be made arbitrarily large by choosing $\sigma$ appropriately. Also, $$d(\sigma, \tau) = \Delta (\sigma, \tau) \geq \frac{1}{1+\Xi(\sigma)}$$ for all distinct $\sigma, \tau \in S\setminus P_2$, and so $S\setminus P_2$ is discrete in $S\setminus P_2$. Finally, it is clear that $|S \setminus P_2| = \aleph_0$.

It remains to show that $S$ is a topological semigroup with respect to the topology induced by $d$. We shall do so by proving that for arbitrary $\sigma, \tau \in S$ and $n\in\N \setminus \{0\}$, there exists $m\in \N \setminus \{0\}$ such that $B(\sigma, 1/m) B(\tau, 1/m)\subseteq B(\sigma \tau, 1/n)$. There are several cases to consider; those not covered below follow by symmetry.

\vspace{\baselineskip}

\noindent\emph{Case 1: $\sigma=0$ and $\tau=0$.}
Set $m = 2n$, and let $\mu,\nu\in B(0, 1/m)$ be arbitrary. If $\mu\nu = 0$, then $\mu\nu \in B(0, 1/n) = B(\sigma \tau, 1/n)$. Let us therefore assume that $\mu\nu \neq 0$. Then $\mu\nu=(u_\mu e^{m_{\mu}}f^{-n_{\mu}}v_{\mu}^{-1}) (u_\nu e^{m_{\nu}}f^{-n_{\nu}}v_{\nu}^{-1})$, and hence either $\mu\nu=
u_\mu e^{m_{\mu}}we^{m_{\nu}}f^{-n_{\nu}}v_{\nu}^{-1}$  for some $w\in B$ or
$\mu\nu= u_\mu e^{m_{\mu}}f^{-n_{\mu}}w^{-1}f^{-n_{\nu}}v_{\nu}^{-1}$
for some $w\in A$. In either case,
\begin{eqnarray*}
  d(\mu\nu, 0)=\Delta(\mu\nu, 0) + \Phi(\mu\nu, 0)
  &\leq& \frac{1}{1+\Xi(\mu\nu)} + \frac{1}{1+\min\{m_{\mu\nu}, n_{\mu\nu}\}}\\
  &\leq& \frac{1}{1+\min\{\Xi(\mu),\Xi(\nu)\}} +  \frac{1}{1+\min\{m_\mu, n_{\mu}, m_{\nu}, n_{\nu}\}}\\
  &\leq& \Delta(\mu, 0)+\Delta(\nu, 0) + \Phi(\mu, 0)+\Phi(\nu, 0)\\
  &=& d(\mu, 0)+d(\nu, 0)<\frac{1}{m} + \frac{1}{m} = \frac{1}{n},
\end{eqnarray*}
and hence $\mu\nu \in B(0, 1/n) = B(\sigma \tau, 1/n)$.

\vspace{\baselineskip} 

\noindent\emph{Case 2: $\sigma=0$ and $\tau \in S\setminus \{0\}$.} As usual we write $\tau=u_\tau e^{m_\tau}f^{-n_\tau}v_\tau^{-1}$. If $\tau \in P_2\setminus \{0\}$, then, since $P_2\setminus\{0\}$ is discrete, we can choose $m > \Xi(\tau)+2n$ such that $B(\tau, 1/m)=\{\tau\}$. If $\tau \in S\setminus P_2$, we choose $m > \Xi(\tau)+2n$ so that $\nu=u_\tau e^{m_\nu}f^{-n_\nu}v_\tau^{-1}$ for all $\nu\in B(\tau, 1/m)$. (Note that for all $\nu \in S$, if $(u_\nu, v_\nu) \neq (u_\tau, v_\tau)$, then $\Delta (\nu, \tau) \geq  1/(1+\Xi(\tau))$.) 

Now, in either situation $$d(\mu, 0) = \Delta (\mu, 0) + \Phi (\mu, 0) \leq \frac{1}{1+\Xi (\mu)} +  \frac{1}{1+\min\{m_\mu, n_\mu\}}  < \frac{1}{m}$$ holds for all $\mu\in B(0, 1/m) \setminus \{0\}$, which implies that $\min\{n_\mu, m_\mu\} \geq m >2n$ and $\Xi (\mu) \geq m > \Xi(\tau)$. From the latter we also see that $|v_\mu| > |u_\tau|$. Thus, for all $\mu\in B(0, 1/m)$ and $\nu\in B(\tau, 1/m)$, either $\mu \nu=0$, or $v_{\mu}=u_{\tau} e^{m_{\nu}}w$ for some $w \in A$ and $\mu \nu=u_{\mu}e^{m_{\mu}}f^{-n_\mu}w^{-1}f^{-n_\nu}v_{\tau}^{-1}.$ In the latter case, $\Xi(\mu\nu)\geq \Xi(\mu) \geq m > 2n$, and $$\Phi(\mu\nu, 0) \leq  \frac{1}{1+\min\{m_{\mu\nu}, n_{\mu\nu}\}} \leq \frac{1}{1+\min\{m_\mu, n_\mu\}} < \frac{1}{2n}$$ (since $m_{\mu}, n_{\mu}> 2n$), from which it follows that $$d(\mu \nu, 0) = \Delta(\mu \nu, 0) + \Phi (\mu \nu, 0) < \frac{1}{2n} + \frac{1}{2n} = \frac{1}{n}.$$ Thus $\mu \nu\in B(0, 1/n) = B(\sigma \tau, 1/n)$ for all $\mu\in B(0, 1/m)$ and $\nu\in B(\tau, 1/m)$, as required.

\vspace{\baselineskip}

\noindent\emph{Case 3: $\sigma, \tau \in P_2\setminus\{0\}$.}
Since $P_2\setminus\{0\}$ is discrete, $\{\sigma\}$ and $\{\tau\}$ are open in
$S$, and hence we can find an $m \in \N \setminus \{0\}$ such that $\{\sigma\} = B(\sigma, 1/m)$ and $\{\tau\} = B(\tau, 1/m)$. Then $B(\sigma, 1/m) B(\tau, 1/m)\subseteq B(\sigma \tau, 1/n)$.

\vspace{\baselineskip}

\noindent \emph{Case 4: $\sigma \in P_2\setminus \{0\}$ and $\tau \in S\setminus P_2$.} 
Write $\sigma = u_\sigma e^{m_\sigma}f^{-n_\sigma}v_\sigma^{-1}$ and $\tau = u_\tau Xv_\tau^{-1}$. Let $m \in \N$ be such that $m> \max\{n, |v_\sigma|+n_\sigma\}$, $B(\sigma, 1/m)=\{\sigma\}$, and $\nu=u_\tau e^{m_{\nu}}f^{-n_{\nu}}v_\tau^{-1}$ for all $\nu\in B(\tau, 1/m)$. Note that as before, $m_{\nu}, n_{\nu} \geq m$ for all $\nu\in B(\tau, 1/m)$.

If $\sigma \tau\neq 0$, then $u_\tau=v_\sigma f^{n_\sigma}w$ for some $w\in \langle e,f \rangle \cup \{1\}$ (since $e^{-1}X = 0 = f^{-1}X$), and hence $\sigma \tau=u_\sigma e^{m_\sigma}wXv_\tau^{-1}$ and $$\sigma \nu = (u_\sigma e^{m_\sigma}f^{-n_\sigma}v_\sigma^{-1})(u_\tau e^{m_{\nu}}f^{-n_{\nu}}v_\tau^{-1}) = u_\sigma e^{m_\sigma}w e^{m_\nu}f^{-n_\nu} v_\tau^{-1}$$ for all $\nu \in B(\tau, 1/m)$. This implies that $$d(\sigma \nu, \sigma \tau) = \Delta (\sigma \nu, \sigma \tau) + \Phi(\sigma \nu, \sigma \tau)  =  0 +  \frac{1}{1+\min\{m_{\sigma \nu}, n_{\sigma \nu}\}} \leq \frac{1}{1+\min\{m_\nu, n_\nu\}} < \frac{1}{m}$$ for all $\nu \in B(\tau, 1/m)$, and hence $B(\sigma, 1/m) B(\tau, 1/m)\subseteq B(\sigma \tau, 1/m)\subseteq B(\sigma \tau, 1/n)$. 

If $\sigma \tau=0$, then either $f^{-n_\sigma}v_\sigma^{-1}u_\tau=0$ or $v_\sigma f^{n_\sigma}=u_\tau wf$ for some $w\in \langle e,f \rangle \cup \{1\}$. In the former case, $$\sigma \nu=(u_\sigma e^{m_\sigma} f^{-n_\sigma}v_\sigma^{-1})(u_\tau e^{m_{\nu}}f^{-n_{\nu}}v_\tau^{-1})=0$$  for all $\nu \in B(\tau, 1/m)$. In the latter case, $$\sigma \nu = (u_\sigma e^{m_\sigma} f^{-1}w^{-1}u_\tau^{-1})(u_\tau e^{m_{\nu}}f^{-n_{\nu}}v_\tau^{-1}) = u_\sigma e^{m_\sigma}f^{-1}w^{-1}e^{m_{\nu}} f^{-n_{\nu}}v_\tau^{-1}$$ for all $\nu \in B(\tau, 1/m)$, and since $m_{\nu} \geq m>|v_\sigma|+n_\sigma \geq |w|+1$, we see that $f^{-1}w^{-1}e^{m_{\nu}}=0$. Either way, $\sigma \nu=0 \in B(0, 1/n) = B(\sigma \tau, 1/n)$ for all $\nu \in B(\tau, 1/m)$, and hence $B(\sigma, 1/m) B(\tau, 1/m)\subseteq B(\sigma \tau, 1/n)$.

\vspace{\baselineskip}

\noindent \emph{Case 5: $\sigma, \tau \in S\setminus P_2$.}
Again, write $\sigma=u_\sigma Xv_\sigma^{-1}$ and $\tau=u_\tau Xv_\tau^{-1}$. Note that $\sigma \tau = 0$, by the presentation of $S$, regardless of the values of $v_\sigma^{-1}$ and $u_\tau$. Now let $m \in \N$ be such that $m > |v_\sigma|+|u_\tau|$, $\mu=u_\sigma e^{m_{\mu}}f^{-n_{\mu}}v_\sigma^{-1}$ for all $\mu\in B(\sigma, 1/m)$, and $\nu= u_\tau e^{m_{\nu}}f^{-n_{\nu}}v_\tau^{-1}$ for all $\nu\in B(\tau, 1/m)$. Then as usual, $m_{\mu}, n_{\mu}, m_{\nu}, n_{\nu} \geq m$ for all $\mu\in B(\sigma, 1/m) \setminus \{\sigma\}$ and $\nu\in B(\tau, 1/m) \setminus \{\tau\}$. In particular, $n_{\nu}, m_{\nu} \geq m>|v_\sigma|+|u_\tau|$, and therefore $f^{-n_{\mu}}v_\sigma^{-1}u_\tau e^{m_{\nu}}=0$. Hence $\mu\nu=0 \in B(0, 1/n) = B(\sigma \tau, 1/n)$ for all $\mu\in B(\sigma, 1/m)$ and $\nu\in B(\tau, 1/m)$.
\end{example}

We conclude with an example of a metrizable topological semigroup $T$ containing a dense copy of $P_2$, where $|T\setminus P_2| = 2^{\aleph_0}$ and $P_2$ is discrete. 

\begin{example}
Let $e,f$ be the generators of $P_2$ as an inverse semigroup, and let $A$ denote the set of sequences $p=(p_1, p_2, \ldots)$, and $B$ denote the set of sequences $q=(\ldots, q_2^{-1}, q_1^{-1})$, where $p_i, q_i\in \{e,f\}$ for all $i$. Define $S$ to be the set of pairs $\sigma=(p, q)\in A\times B$ such that the lower asymptotic density of $e$ in $p$ is more than $1/2$  and the upper asymptotic density of $e^{-1}$ in $q$ is less than $1/2$, that is $$\liminf_{n\to \infty} \frac{|\{1\leq i\leq n : p_i=e\}|}{n} > \frac{1}{2} \ \ \ \text{and} \ \ \limsup_{n\to \infty} \frac{|\{1 \leq i\leq n : q_i^{-1}=e^{-1}\}|}{n} < \frac{1}{2}.$$ It is clear that $|S|=2^{\aleph_0}$. Also, if $(p,q), (x,y) \in S$ are any elements, with $p=(p_1, p_2, \ldots)$, $q=(\ldots, q_2^{-1}, q_1^{-1})$, $x=(x_1, x_2, \ldots)$, and $y=(\ldots, y_2^{-1}, y_1^{-1})$, then $(p_1, p_2, \ldots) \neq (y_1, y_2, \ldots)$. For the sake of brevity, we shall denote such elements $\sigma = (p,q) \in S$ by $\sigma = p_1p_2\cdots q_2^{-1} q_1^{-1}$.

We define our Hausdorff topological semigroup as $T:=P_2\cup S$, with multiplication extending the usual multiplication on $P_2$, where $\sigma \tau=0$ for all $\sigma, \tau\in S$, and where for all $x\in \{e,f\}$ and $\sigma=p_1p_2\cdots q_2^{-1} q_1^{-1} \in S$,
\begin{eqnarray*}
x\cdot \sigma &=& xp_1p_2\cdots q_2^{-1} q_1^{-1}\\
\sigma \cdot x^{-1}&=& p_1p_2\cdots q_2^{-1}q_1^{-1}x^{-1} \\
\sigma \cdot x&=&
\begin{cases}
p_1p_2\cdots q_2^{-1} &\text{if }x=q_1\\
0&\text{if } x\neq q_1
\end{cases}\\
x^{-1}\cdot \sigma &=&
\begin{cases}
p_2\cdots q_2^{-1} q_1^{-1} &\text{if }x=p_1\\
0&\text{if } x\neq p_1.
\end{cases}
\end{eqnarray*}
So that we can express all elements of $T$ in the form $p_1p_2\cdots q_2^{-1} q_1^{-1}$, we make the following convention. Given $p_1, \dots, p_n, q_1, \dots, q_m \in \{e,f\}$, let $p=(p_1, \ldots, p_n,1,1 \ldots)$ and $q = (\ldots, 1^{-1}, 1^{-1}, q_m^{-1},\ldots, q_1^{-1})$. Also let $\mathfrak{1} = (1,1, \ldots)$, $\mathfrak{1}^{-1} = (\ldots, 1, 1)$, $\mathfrak{0} = (0,0, \ldots )$, and $\mathfrak{0}^{-1} = (\ldots, 0, 0)$. Then we identify $p_1 \ldots p_nq_m^{-1} \ldots q_1^{-1} = (p,q),$ $p_1 \ldots p_n = (p,\mathfrak{1}^{-1})$, $q_m^{-1} \ldots q_1^{-1} = (\mathfrak{1},q)$, $1 = (\mathfrak{1}, \mathfrak{1}^{-1})$, and $0 = (\mathfrak{0},\mathfrak{0}^{-1})$.

Next, define $d : T\times T \to \R$ by  
\begin{equation*}
  d(\sigma, \tau) =
  \begin{cases}
    0 & \text{if } \sigma = \tau \\
   \displaystyle \frac{1}{\min \{i : p_i \neq x_i \text{ or } q_i \neq y_i\}} & \text{if } \sigma \neq \tau,
  \end{cases}
\end{equation*}
where $\sigma = p_1p_2\cdots q_2^{-1} q_1^{-1}$ and $\tau =x_1x_2\cdots y_2^{-1} y_1^{-1}$. It is not hard to verify that $d$ is a metric. For any element $\sigma = p_1p_2\dots q_2^{-1}q_1^{-1} \in S$ and any $n \in \N \setminus \{0\}$ we have $$d(\sigma, p_1\dots p_nq_n^{-1}\dots q_1^{-1}) = d(p_1p_2\dots q_2^{-1}q_1^{-1},p_1\dots p_nq_n^{-1}\dots q_1^{-1}) = \frac{1}{n+1},$$ from which it follows that $P_2$ is dense in $T$. Also for all $$\sigma = p_1 \dots p_mq_n^{-1}\dots q_1^{-1} = p_1 \dots p_m11 \dots 1^{-1}1^{-1}q_n^{-1}\dots q_1^{-1} \in P_2\setminus \{0\}$$ and all $\tau \in T \setminus \{\sigma\}$, we see that $$d(\sigma, \tau) \geq \frac{1}{\min \{n+1,m+1\}},$$ while for all $\tau \in T \setminus \{0\}$, clearly $d(0, \tau) = 1,$ from which we see that $P_2$ is discrete in $T$.

It remains to show that $T$ is a topological semigroup with respect to the topology induced by $d$. To do so, let $\sigma, \tau \in T$ be arbitrary elements, and let $U$ be an open neighborhood of $\sigma\tau$. We wish to find open neighborhoods $V$ and $W$ of $\sigma$ and $\tau$, respectively, such that $VW \subseteq U$.

If $\sigma, \tau \in P_2$, then we simply take $V = \{\sigma\}$ and $W = \{\tau\}$. Next, suppose that $\sigma, \tau \in S$, and write $\sigma=x_1x_2\cdots y_2^{-1}y_1^{-1}$, $\tau=p_1p_2\cdots q_2^{-1}q_1^{-1}$. By the definition of $S$, there must be some $n \in \N$ such that $y_n \neq p_n$. Thus taking $V = B(\sigma, 1/n)$ and $W = B(\tau, 1/n)$, we see that $\mu \nu = 0$ for all $\mu \in V$ and $\nu \in W$. Since $\sigma\tau = 0$, it follows that $VW \subseteq U$.

We may therefore assume that $\sigma \in P_2$ and $\tau \in S$ (for, the case where $\sigma \in S$ and $\tau \in P_2$ can be handled analogously). If $\sigma = 0$, then taking $V = \{\sigma\}$ and $W$ to be any open neighborhood of $\tau$ gives the desired result. Let us therefore assume that $\sigma \neq 0$, and write $\sigma = xy^{-1}$ ($x, y \in \langle e,f \rangle \cup \{1\}$) and $\tau = p_1p_2\cdots q_2^{-1}q_1^{-1}$. If $\sigma \tau = 0$, then $y \neq 1$ and $y^{-1}p_1p_2\dots p_{|y|} = 0$, implying that $\sigma\mu=0$ for all $\mu\in B(\tau, 1/|y|)$. Hence, letting $V = \{\sigma\}$ and $W = B(\tau, 1/|y|)$, we have $VW = \{0\} \subseteq U$. Thus let us suppose that $\sigma \tau \neq 0$. We may also assume that $U = B(\sigma\tau, 1/m)$ for some $m \in \N$. Let $n \geq |y|+m$ be arbitrary, and set $V = \{\sigma\}$ and $W = B(\tau, 1/n)$. Then for all $\mu \in W$, we can write $$\mu = p_1p_2 \cdots p_nt_{n+1}t_{n+2} \cdots z_{n+2}^{-1}z_{n+1}^{-1}q_n^{-1} \cdots q_2^{-1}q_1^{-1}$$ for some $t_i, z_i \in \{e,f\} \cup \{1\}$. Hence for all such $\mu$, $$\sigma\mu= xy^{-1}\mu = xp_{|y|+1}p_{|y|+2} \cdots p_nt_{n+1}t_{n+2} \cdots z_{n+2}^{-1}z_{n+1}^{-1}q_n^{-1} \cdots q_2^{-1}q_1^{-1},$$ since $n > |y|$. Therefore $|xp_{|y|+1}\cdots p_n| > n-|y| \geq m$, and so $\sigma \mu \in B(\sigma\tau, 1/m) = U$. It follows that $VW \subseteq U$, as desired.
\end{example}

\vspace{.1in}

\noindent Z.\ Mesyan, Department of Mathematics, University of Colorado, Colorado Springs, CO 80918, USA 

\noindent \emph{Email:} zmesyan@uccs.edu \newline

\noindent J.\ D.\ Mitchell, Mathematical Institute, North Haugh, St Andrews, Fife, KY16 9SS, Scotland

\noindent \emph{Email:} jdm3@st-and.ac.uk \newline

\noindent M.\ Morayne,  Institute of Mathematics  and Computer Science, Wroc\l aw University of Technology, Wybrze\.ze Wyspia\'nskiego 27, 50-370 Wroc\l aw, Poland

\noindent \emph{Email:} michal.morayne@pwr.wroc.pl \newline

\noindent Y.\ P\'eresse, University of Hertfordshire, Hatfield, Hertfordshire, AL10 9AB, UK

\noindent \emph{Email:} y.peresse@herts.ac.uk


\begin{thebibliography}{00}

\bibitem{AP} G.\ Abrams and G.\ Aranda Pino, \textit{The Leavitt path algebra of a graph,} J.\ Algebra \textbf{293} (2005) 319--334.

\bibitem{AMP} P.\ Ara, M.\ A.\ Moreno, and E.\ Pardo, \textit{Nonstable $K$-theory for graph algebras,} Algebr.\ Represent.\ Theory \textbf{10} (2007) 157--178.

\bibitem{AH} C.\ J.\ Ash and T.\ E.\ Hall, \textit{Inverse semigroups on graphs,} Semigroup Forum \textbf{11} (1975) 140--145.

\bibitem{BG} S.\ Bardyla and O.\ Gutik, \textit{On an semitopological polycyclic monoid,}  Algebra Discr.\ Math., in press.

\bibitem{CS} A.\ Costa and B.\ Steinberg, \textit{A categorical invariant of flow equivalence of shifts,} Ergod.\ Theory Dyn.\ Syst.\ \textbf{36} (2016) 470--513.

\bibitem{ES} C.\ Eberhart and J.\ Selden, \textit{On the closure of the bicyclic semigroup,} Trans.\ Amer.\ Math.\ Soc.\ \textbf{144} (1969) 115--126.

\bibitem{Engelking} R.\ Engelking, \textit{General topology,} Heldermann Verlag, 1989.

\bibitem{JL} D.\ G.\ Jones and M.\ V.\ Lawson, \textit{Graph inverse semigroups: their characterization and completion}, J.\ Algebra \textbf{409} (2014) 444--473.

\bibitem{Krieger} W.\ Krieger, \textit{On subshifts and semigroups,} Bull.\ London Math.\ Soc.\ \textbf{38} (2006) 617--624.

\bibitem{KPRR} A.\ Kumjian, D.\ Pask, I.\ Raeburn, and J.\ Renault, \textit{Graphs, groupoids, and Cuntz-Krieger algebras,} J.\ Funct.\ Anal.\ \textbf{144} (1997) 505--541.

\bibitem{KPR} A.\ Kumjian, D.\ Pask, and I.\ Raeburn, \textit{Cuntz-Krieger algebras of directed graphs,} Pacific J.\ Math.\ \textbf{184} (1998)
161--174.

\bibitem{Lawson} M.\ V.\ Lawson, \textit{Primitive partial permutation representations of the polycyclic monoids and branching function systems,} Period.\ Math.\ Hungar.\ \textbf{58} (2009) 189--207.

\bibitem{Lawson2} M.\ V.\ Lawson, \textit{Non-commutative Stone duality: inverse semigroups, topological groupoids and $C^*$-algebras,} Internat.\ J.\ Algebra Comput.\ \textbf{22}, no.\ 6, (2012), 47 pp.

\bibitem{Leavitt} W.\ G.\ Leavitt, \textit{The module type of a ring,} Trans.\ Amer.\ Math.\ Soc.\ \textbf{42} (1962) 113--130.

\bibitem{Lenz} D.\ H.\ Lenz, \textit{On an order-based construction of a topological groupoid from an inverse semigroup,} Proc.\ Edinb.\ Math.\ Soc.\ \textbf{51} (2008) 387--406.

\bibitem{MS} J.\ Meakin and M.\ Sapir, \textit{Congruences on free monoids and submonoids of polycyclic monoids,} J.\ Austral.\ Math.\ Soc.\ Ser.\ A \textbf{54} (1993) 236--253.

\bibitem{MM} Z.\ Mesyan and J.\ D.\ Mitchell, \textit{The structure of a graph inverse semigroup,} Semigroup Forum, in press.

\bibitem{NP} M.\ Nivat and J.-F.\ Perrot, \textit{Une g\'{e}n\'{e}ralisation du mono\"{i}de bicyclique,} Comptes Rendus de l'Acad\'{e}mie des Sciences de Paris \textbf{271} (1970) 824--827.

\bibitem{Paterson} A.\ L.\ T.\ Paterson, \textit{Graph inverse semigroups, groupoids and their C*-algebras,} J.\ Operator Theory \textbf{48} (2002) 645--662.

\end{thebibliography}
\end{document}